\theoremstyle{plain}
\newtheorem{thm}{Theorem}[section]
\newtheorem{theorem}[thm]{Theorem}
\newtheorem{corollary}[thm]{Corollary}
\newtheorem{lemma}[thm]{Lemma}
\newtheorem{proposition}[thm]{Proposition}
\newtheorem{example}[thm]{Example}
\newtheorem{claim}[thm]{Claim}
\newtheorem*{claim*}{Claim}
\theoremstyle{definition}
\newtheorem{definition}{Definition}
\theoremstyle{remark}
\newtheorem{remark}{Remark}
\renewcommand{\norm}[1]{\|#1\|}                     % norm
\newcommand{\set}[1]{\{#1\}}
\newcommand{\inp}[2]{{\left\langle #1,#2 \right\rangle}}            % inner product
\DeclareMathOperator{\conv}{conv}                                             % convex hull
\def\rank{{\mathrm{rk}}}                               % rank
\DeclarePairedDelimiter\ceil{\lceil}{\rceil}             % integer part
\DeclareMathOperator{\Ex}{\mathbb{E}}           % expected value 
\DeclareMathOperator{\Bohr}{Bohr}
\newcommand{\bfy}{\mathbf y}
\newcommand{\bfx}{\mathbf x} 
\newcommand{\bfz}{\mathbf z} 
\newcommand{\bfb}{\mathbf b} 
\newcommand{\bfs}{\mathbf s} 
\newcommand{\bft}{\mathbf t}
\newcommand{\bfA}{\mathbf A} 
\newcommand{\bfw}{\mathbf w}
\newcommand{\N}{\mathbb{N}}
\newcommand{\R}{\mathbb{R}}
\newcommand{\F}{\mathbb{F}}
\newcommand{\Z}{\mathbb{Z}}
\newcommand{\cB}{\mathcal B}
\newcommand{\cL}{\mathcal L}
\newcommand{\cX}{\mathcal X}
\newcommand{\cY}{\mathcal Y}
\newcommand{\defeq}{\coloneqq}
\newcommand{\eps}{\varepsilon}
\def\1{\mathbf{1}} 
\def\0{\mathbf{0}}
\newcommand{\e}{\mathbf e}
\DeclareMathOperator{\Cay}{Cay}
\title{Sparse graph counting and Kelley--Meka bounds for binary systems}
\author{
 Yuval Filmus\thanks{The Henry \& Marilyn Taub Faculty of Computer Science and Faculty of Mathematics, Technion. Email: \texttt{yuvalfi@cs.technion.ac.il}
 %https://orcid.org/0000-0002-1739-0872
 }
 \and
	Hamed Hatami\thanks{School of Computer Science, McGill University. Email: \texttt{hatami@cs.mcgill.ca} 
 %orcid: 0000-0002-4732-434X
 }
    \and Kaave Hosseini\thanks{Department of Computer Science, University of Rochester. Email: \texttt{kaave.hosseini@rochester.edu}
    %orcid: 0000-0002-3497-3500
    }	
    \and 
     Esty Kelman\thanks{Department of Computer Science and Department of Computing \& Data Sciences, Boston University and CSAIL, Massachusetts Institute of Technology. Email: \texttt{ekelman@mit.edu}. Supported in part by NSF TRIPODS program (award DMS-2022448).
     %https://orcid.org/0009-0007-4962-848X
     }
}
\date{\today}
\begin{document}
 
\maketitle
\begin{abstract}
In a recent breakthrough, Kelley and Meka (FOCS 2023) obtained a strong upper bound on the density of sets of integers without non-trivial three-term arithmetic progressions. In this work, we extend their result, establishing similar bounds for all linear patterns defined by binary systems of linear forms, where  ``binary'' indicates that every linear form depends on exactly two variables. Prior to our work, no strong bounds were known for such systems even in the finite field model setting.

 A key ingredient in our proof is a  \emph{graph counting lemma}. The classical graph counting lemma, developed by Thomason (Random Graphs 1985) and Chung, Graham, and Wilson (Combinatorica 1989), is a fundamental tool in combinatorics. For a fixed graph $H$, it states that the number of copies of  $H$ in a pseudorandom graph $G$ is similar to the number of copies of $H$ in a purely random graph with the same edge density as $G$. However, this lemma is only non-trivial when $G$ is a dense graph. 
In this work, we prove a graph counting lemma that is also effective when $G$ is sparse. Moreover, our lemma is well-suited for density increment arguments in additive number theory. 

As an immediate application, we obtain a strong bound for the Tur\'an problem in abelian Cayley sum graphs: let $\Gamma$ be a finite abelian group with odd order. If a Cayley sum graph on  $\Gamma$ does not contain any $r$-clique as a subgraph, it must have at most $2^{-\Omega_r(\log^{1/16}|\Gamma|)}\cdot |\Gamma|^2$ edges.

These results hinge on the technology developed by Kelley and Meka and the follow-up work by Kelley, Lovett, and Meka (STOC 2024).

\end{abstract}

\thispagestyle{empty} 
\newpage

\setcounter{page}{1}

\section{Introduction}

The \emph{graph counting lemma}, due to Thomason~\cite{MR0930498} and Chung, Graham, and Wilson~\cite{CGW89}, is a fundamental result in graph theory, with many applications in extremal combinatorics, additive number theory, discrete geometry, and theoretical computer science. Informally, it states that the number of embeddings of a fixed graph $H$ into a random-looking (pseudorandom) graph $G$ is similar to the number of embeddings of $H$ into a genuine random graph with the same edge density as $G$.

For certain problems, particularly in additive number theory, it is possible to use the counting lemma to obtain a density increment argument. Roughly speaking, if the number of embeddings of $H$ into $G$  deviates from that of a random graph, then by the counting lemma, $G$ is not pseudorandom and, therefore, contains some structure. In certain cases, it is possible to exploit this structure to restrict the problem to a significantly denser subproblem and repeat the argument. As the density cannot exceed one, this process must stop after a bounded number of iterations.

Many of the strongest known bounds in additive number theory and extremal combinatorics rely on the density increment approach. This method often yields much better bounds compared to other common techniques, such as the regularity method.

The classical graph counting lemma has a shortcoming: it is only effective for dense graphs.  Indeed, while the counting lemma and the regularity method provide an elegant theory for understanding the subgraph densities in dense graphs~\cite{MR3012035}, obtaining an analogous theory for the sparse setting has proven challenging. Over the past few decades, considerable effort has been devoted towards developing sparse analogs of the counting and regularity lemmas (see for example~\cite{MR1661982, MR2187740, chung2002sparse, kohayakawa2003regular,  kohayakawa2004embedding, kohayakawa2010triangle, conlon2014extremal, allen2020regularity}).

The conventional strategy for extending these results to sparse graphs is to introduce an additional requirement: that the graph $G$, even though sparse, is a relatively dense subgraph of a much more pseudorandom host graph $\Gamma$. For example,  in an important paper, Conlon, Fox, and Zhao~\cite{conlon2014extremal}  proved an effective sparse counting lemma under this additional requirement. Their result generalizes an earlier theorem of Kohayakawa, R{\"o}dl, Schacht, and Skokan~\cite{kohayakawa2010triangle} that only applied to counting triangles. 
However, the additional requirement that $G$ must live in a much more pseudorandom graph $\Gamma$ makes such sparse counting lemmas unsuitable for certain applications of the original counting lemma.

In this paper, we prove a counting lemma for sparse graphs (\cref{thm:regular_count}) that circumvents this extra assumption on $G$. Our theorem states that if the number of embeddings of $H$ into $G$  deviates from what is expected, then either $G$ contains many vertices of small degree, or there exist large subsets $S, T \subseteq V(G)$ such that the density of the induced bipartite graph between them is notably \emph{higher} than the density of $G$. Due to its guarantee of increased density, our counting lemma is tailored for density increment arguments.

We use our new sparse counting lemma alongside various tools from additive number theory, such as the Bohr set machinery, almost periodicity, and dependent random choice, to extend Kelley and Meka's strong bound on $3$-progression-free sets to a broad and important class of more general linear patterns. We will postpone the detailed discussion of this result to \Cref{sec:intro_kelleyMeka}.

\subsection{The sparse graph counting lemma}  
We identify undirected graphs as symmetric functions $A\colon \cX \times \cX \to \{0,1\}$ through the correspondence with the characteristic function of the edges. The \emph{density of $A$} is
\[\Ex[A] \defeq \Ex_{\substack{\bfx_1,\bfx_2 \in \cX}}[A(\bfx_1,\bfx_2)],\] 
where $\bfx_1,\bfx_2 \in \cX$ means that $\bfx_1$ and $\bfx_2$ are chosen uniformly and independently from $\cX$.

Let $H = ([k], E_H)$ be a fixed undirected graph with vertex set $[k] \defeq \{1,\ldots,k\}$ and $m=|E_H|$ edges. We define the \emph{density of $H$ in $A$} as  
 \[t_H(A) \defeq \Ex_{\substack{\bfx_1,\ldots,\bfx_k \in \cX}}  \left[\prod_{\{i,j\} \in E_H} {
A}(\bfx_i,\bfx_j)\right].\]

When $A$ is the Erd\H{o}s--R\'{e}nyi random graph with parameter $\alpha$, we have $t_H(A) \approx \alpha^m$, with high probability. For an arbitrary $A$, the graph counting lemma of \cite{MR0930498,CGW89} guarantees $t_H(A) \approx \alpha^m$ if $A$  is pseudorandom in a certain structural sense. We state this result in the contrapositive form.

\begin{theorem}[{Graph Counting Lemma~\cite{MR0930498,CGW89}}]\label{theorem:CGW}
For every fixed graph $H$ with $m$ edges and $\eps \in (0,1)$, there exists $\delta=\delta(\eps,m)>0$ such that the following is true. If $A$ is a graph on $\cX$ with density $\alpha$ that satisfies 
\[
    \left|t_H(A)- \alpha^m \right| \ge \eps,
\]
there must exist $S,T \subseteq \cX$ with densities $\frac{|S|}{|\cX|}, \frac{|T|}{|\cX|} \ge  \Omega_{m,\eps}(1)$
such that   
\[
\left|\Ex_{\substack{(\bfx,\bfy) \in S\times T}}[A(\bfx,\bfy)] -\alpha\right|\geq \delta. 
\]
\end{theorem}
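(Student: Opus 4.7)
The plan is a standard telescoping argument that bounds deviations in the $H$-count by the cut norm of $A-\alpha$, from which one can directly read off the desired dense rectangle $S\times T$. Set $f \defeq A - \alpha$, so $f\colon\cX\times\cX\to[-1,1]$ and $\Ex[f]=0$, and define the cut norm
\[
\|f\|_\square \defeq \max_{S,T\subseteq\cX}\left|\Ex_{\bfx,\bfy\in\cX}[\1_S(\bfx)\,\1_T(\bfy)\,f(\bfx,\bfy)]\right|.
\]
The heart of the argument is the stability estimate $|t_H(A)-\alpha^m|\leq m\cdot\|f\|_\square$; once it is established, the conclusion will follow with $\delta = \eps/m$.

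For the estimate, enumerate $E_H = \{e_1,\dots,e_m\}$ and define hybrid functions $A_k$ on $\cX\times\cX$ that use $A$ on $e_1,\dots,e_k$ and the constant $\alpha$ on the remaining edges. Then $A_0\equiv\alpha$ gives $t_H(A_0)=\alpha^m$, $A_m=A$, and by the triangle inequality it suffices to bound each step $|t_H(A_k)-t_H(A_{k-1})|$ by $\|f\|_\square$. Fix $k$ with $e_k=\{i,j\}$; the difference in question is an expectation over $(\bfx_v)_{v\in V(H)}$ of a product in which edge $e_k$ carries the factor $f(\bfx_i,\bfx_j)$ while every other edge carries a factor in $[0,1]$. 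Because $H$ is a simple graph, every edge $e_\ell\neq e_k$ is incident to at most one of $i$ and $j$, so after fixing the vertex variables outside $\{i,j\}$, the product of the other $m-1$ factors splits as $U(\bfx_i)\cdot V(\bfx_j)\cdot W$ with $U,V\colon\cX\to[0,1]$ and $W\in[0,1]$ a constant. Writing $U=\int_0^1\1_{\{U\geq t\}}\,\der t$ and similarly for $V$, the inner expectation $\Ex_{\bfx_i,\bfx_j}[U(\bfx_i)V(\bfx_j)f(\bfx_i,\bfx_j)]$ becomes an average over $(t,s)\in[0,1]^2$ of quantities of the form $\Ex_{\bfx_i,\bfx_j}[\1_{S_t}(\bfx_i)\1_{T_s}(\bfx_j)f(\bfx_i,\bfx_j)]$, each bounded in absolute value by $\|f\|_\square$. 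Averaging over the remaining vertex variables preserves the bound.

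Combining the stability estimate with the hypothesis gives $\|f\|_\square\geq\eps/m$, so there exist $S,T\subseteq\cX$ with $|\Ex_{\bfx,\bfy\in\cX}[\1_S(\bfx)\1_T(\bfy)f(\bfx,\bfy)]|\geq\eps/m$. Since $|f|\leq 1$, this expectation is at most $|S||T|/|\cX|^2$, which forces $|S|,|T|\geq(\eps/m)|\cX|$. Rewriting the same quantity as $(|S||T|/|\cX|^2)\cdot\Ex_{(\bfx,\bfy)\in S\times T}[f(\bfx,\bfy)]$ and using $|S||T|\leq|\cX|^2$ yields
\[
\left|\Ex_{(\bfx,\bfy)\in S\times T}[A(\bfx,\bfy)]-\alpha\right|=\left|\Ex_{(\bfx,\bfy)\in S\times T}[f(\bfx,\bfy)]\right|\geq\eps/m,
\]
which is the conclusion with $\delta=\eps/m$. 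The one subtle step in the whole argument is the splitting $U(\bfx_i)V(\bfx_j)W$ of the other $m-1$ factors; this is where simplicity of $H$ is essential (an edge other than $e_k$ cannot join $i$ to $j$), and after that step the level-set decomposition reduces everything to the cut norm. The remaining work is bookkeeping.
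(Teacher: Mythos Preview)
The paper does not give its own proof of \cref{theorem:CGW}; it is stated as a classical result with citations to \cite{MR0930498,CGW89}. So there is nothing to compare against, and the question is simply whether your argument is correct.

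It is. The telescoping/hybrid argument combined with the layer-cake decomposition is one of the standard routes to the counting lemma; the key point---that after fixing all vertex variables outside $\{i,j\}$ the remaining product factors as $U(\bfx_i)V(\bfx_j)W$---is handled correctly and is exactly where simplicity of $H$ enters. Two minor presentational comments. First, your ``hybrid functions $A_k$ on $\cX\times\cX$'' are not really single functions; what you are actually defining is a sequence of scalars $P_k=\Ex\prod_{\ell\le k}A(\bfx_{e_\ell})\prod_{\ell>k}\alpha$, and the telescoping is $t_H(A)-\alpha^m=\sum_k(P_k-P_{k-1})$. Second, the deduction that both $|S|$ and $|T|$ are individually at least $(\eps/m)|\cX|$ deserves one more word: from $|S||T|\ge(\eps/m)|\cX|^2$ and $|S|,|T|\le|\cX|$ you get $|T|\ge(\eps/m)|\cX|^2/|S|\ge(\eps/m)|\cX|$, and symmetrically for $|S|$. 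With these clarifications the proof is complete, with $\delta=\eps/m$ and both densities at least $\eps/m$.
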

In other words, if $t_H(A)$ is significantly different from what is expected from a random graph of density $\alpha$, then on some large combinatorial rectangle $S \times T$,  the density of $A$ significantly deviates from $\alpha$.

In the sparse setting, where $\alpha = o(1)$, we have $|t_H(A)-\alpha^m|=o(1)$, and therefore,  to obtain a useful counting lemma for sparse graphs, we need to refine the assumption of the counting lemma to 
\begin{equation}
\label{eq:stronger_assumption}
    \left|t_H(A)- \alpha^m \right| \ge \eps \alpha^m.
\end{equation}
The following well-known example shows that \eqref{eq:stronger_assumption} does not necessarily imply the existence of $S,T \subseteq \cX$ with density $\Omega_{m,\eps}(1)$  that satisfy
\begin{equation}
\label{eq:stronger_conclusion}
\left|\Ex_{\substack{(\bfx,\bfy) \sim S\times T}}[A(\bfx,\bfy)] -\alpha\right|\geq \Omega(\alpha).
\end{equation}
 
\begin{example}\label{example:trianglesparse}
Let $H$ be the triangle graph, and let $A$ be the Erd\H{o}s--R\'{e}nyi random graph on $n$ vertices with parameter $\alpha = n^{-2/3}$. With high probability, we have  $t_H(A) \approx \alpha^3 = n^{-2}$, and therefore, there are at most $O(n)$ triangles in $A$. By removing at most $O(n)$ edges from $A$, we can obtain a triangle-free $A'$.  Since $A'$ is triangle-free and its density is very close to $\alpha$, it satisfies the assumption \eqref{eq:stronger_assumption}.   Moreover, 
\begin{equation*}
\left|\Ex_{\substack{(\bfx,\bfy) \sim S\times T}}[A'(\bfx,\bfy)] -\alpha\right| =  o(\alpha) 
\end{equation*}
for all subsets $S,T\subseteq [n]$ of density at least, say,  $n^{-1/7}$. Therefore, $A'$ violates the desired conclusion \eqref{eq:stronger_conclusion}.
\end{example}
To circumvent such examples, Conlon, Fox and Zhao~\cite{conlon2014extremal} assume further that $A$ is a relatively dense subgraph of a host graph $\Gamma$ that satisfies much stronger pseudo-randomness conditions. Their counting lemma states that under this extra requirement, \eqref{eq:stronger_assumption} indeed implies \eqref{eq:stronger_conclusion} for some $S,T \subseteq \cX$ with density $\Omega_{m,\eps}(1)$.
We prove a counting lemma that does not require such an extra pseudorandomness condition on $A$.  

\begin{theorem}
\label{theorem:low_disc_count} 
Let $H$ be a graph with $m$ edges, and let $\eps \in (0,1)$. For every graph $A$ with vertex set $\cX$ and density $\alpha$, the following holds.
\begin{itemize}
\item[(i)] If $t_H(A) \ge (1+\eps)^m \alpha^m$, there exist $S,T \subseteq \cX$ with  $\frac{|S||T|}{|\cX|^2}=\Omega(\eps\alpha^{m+1})$ such that   
\[\Ex_{\substack{(\bfx,\bfy) \sim S\times T}}[A(\bfx,\bfy)] \ge \left(1+\frac{\eps}{2}\right) \alpha. \]
\item[(ii)] If $t_H(A) \le (1-\eps)^m \alpha^m$, there exist $S,T \subseteq \cX$ with  $\frac{|S||T|}{|\cX|^2}=\Omega(\eps(1-\eps)^{m-1} \alpha^m)$ such that   
\[\Ex_{\substack{(\bfx,\bfy) \sim S\times T}}[A(\bfx,\bfy)] \le \left(1-\frac{\eps}{2}\right) \alpha. \]
\end{itemize}
\end{theorem}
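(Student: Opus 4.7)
My plan is to prove both parts via the same elementary averaging identity, combined with a telescoping-product argument for part (i) and an induction on $m$ for part (ii).

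The first step is to set up the averaging identity. For a fixed edge $e = \{i, j\} \in E_H$ and a tuple $\bfx_{-e} = (\bfx_\ell)_{\ell \neq i, j}$, I would define
\[
S(\bfx_{-e}) = \{x \in \cX : A(x, \bfx_\ell) = 1 \text{ for all } \{i, \ell\} \in E_H \text{ with } \ell \neq j\},
\]
the analogous set $T(\bfx_{-e})$ relative to $j$, and the indicator $g(\bfx_{-e}) \in \{0,1\}$ that every edge of $H$ disjoint from $e$ is realized by $A$. Separating the variables $\bfx_i, \bfx_j$ from the rest of the expectation then yields
\[
t_H(A) = \Ex_{\bfx_{-e}}\!\left[g \cdot \tfrac{|S||T|}{|\cX|^2} \cdot \Ex_{S \times T}[A]\right], \qquad t_{H \setminus e}(A) = \Ex_{\bfx_{-e}}\!\left[g \cdot \tfrac{|S||T|}{|\cX|^2}\right].
\]

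For part (i), I would fix any ordering $e_1, \ldots, e_m$ of the edges, set $H_i = H \setminus \{e_1, \ldots, e_i\}$, and let $r_i = t_{H_i}(A)/t_{H_{i+1}}(A) \in [0, 1]$. The telescoping identity $\prod_{i=0}^{m-1} r_i = t_H(A) \geq ((1+\eps)\alpha)^m$ forces some $r_i \geq (1+\eps)\alpha$. Applied to $H' = H_i$ and $e = e_{i+1}$, the two identities above combine to
\[
\Ex_{\bfx_{-e}}\!\left[g \cdot \tfrac{|S||T|}{|\cX|^2}\bigl(\Ex_{S \times T}[A] - (1 + \tfrac{\eps}{2})\alpha\bigr)\right] = t_{H' \setminus e}(A)\bigl[r_i - (1 + \tfrac{\eps}{2})\alpha\bigr] \geq \tfrac{\eps}{2}\alpha\, t_H(A),
\]
using the monotonicity $t_{H' \setminus e}(A) \geq t_H(A)$. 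Extracting a point $\bfx_{-e}$ where the integrand meets its mean forces $g = 1$, $\Ex_{S \times T}[A] \geq (1+\eps/2)\alpha$, and, since $\Ex_{S \times T}[A] - (1+\eps/2)\alpha \leq 1$, also $|S||T|/|\cX|^2 \geq \tfrac{\eps}{2}\alpha\, t_H(A) = \Omega(\eps \alpha^{m+1})$.

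For part (ii), I would induct on $m$; the base case $m \leq 1$ is vacuous because $t_H(A) = \alpha > (1-\eps)\alpha$. For the step, if some edge $e$ has $t_{H \setminus e}(A) > (1-\eps)^{m-1}\alpha^{m-1}$, applying the identity gives
\[
\Ex_{\bfx_{-e}}\!\left[g \cdot \tfrac{|S||T|}{|\cX|^2}\bigl((1-\tfrac{\eps}{2})\alpha - \Ex_{S \times T}[A]\bigr)\right] = (1-\tfrac{\eps}{2})\alpha\, t_{H \setminus e}(A) - t_H(A) > \tfrac{\eps}{2}(1-\eps)^{m-1}\alpha^m,
\]
and extracting a good $\bfx_{-e}$ produces $S, T$ with $\Ex_{S \times T}[A] < (1-\eps/2)\alpha$ and $|S||T|/|\cX|^2 \geq \tfrac{\eps}{2}(1-\eps)^{m-1}\alpha^{m-1} \geq \tfrac{\eps}{2}(1-\eps)^{m-1}\alpha^m$. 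Otherwise $t_{H \setminus e}(A) \leq (1-\eps)^{m-1}\alpha^{m-1}$ for every $e$, and I would apply the inductive hypothesis to $H \setminus e$ for an arbitrary $e$; the resulting $\Omega(\eps(1-\eps)^{m-2}\alpha^{m-1})$ dominates the required $\Omega(\eps(1-\eps)^{m-1}\alpha^m)$ since $(1-\eps)\alpha \leq 1$. The main obstacle I anticipate is precisely why part (ii) needs induction: the clean telescoping argument from part (i) fails because the lower bound $t_{H' \setminus e}(A) \geq t_H(A)$ becomes useless when $t_H(A)$ is near zero (e.g.\ a triangle-free sparse graph as in Example~\ref{example:trianglesparse}); the induction sidesteps this by splitting into a direct one-step calculation or a reduction to a strictly smaller graph.
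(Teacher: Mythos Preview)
Your proof is correct. Both arguments rest on the same two-step idea---locate an edge $e$ in (a subgraph of) $H$ whose removal changes the homomorphism density by a factor far from $\alpha$, then average over the non-$e$ variables to extract a single rectangle---but the executions differ in two places worth noting. First, the paper packages the second step as a ``soft rectangle'' lemma (\cref{lem:soft_rect}): it writes $F(x,y)=\Ex_{\bfx_{-e}}[\cdots]$ as an abstract convex combination of rectangles, discards those of density below a threshold $\tau$, and then averages. Your averaging identity is more direct: by carrying the factor $g\cdot|S||T|/|\cX|^2$ inside the expectation you simultaneously control the rectangle's density and its $A$-density in one pigeonhole step, bypassing the thresholding entirely. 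Second, for part~(i) the paper finds the good edge by induction on $m$ (recurse if $t_{H\setminus e}(A)\ge (1+\eps)^{m-1}\alpha^{m-1}$), whereas your telescoping product $\prod r_i=t_H(A)$ locates it in one shot; these are equivalent in content, with the telescoping arguably cleaner. For part~(ii) both proofs use the same induction on $m$, and your observation that the constant in the $\Omega(\cdot)$ does not degrade (since $(1-\eps)^{m-2}\alpha^{m-1}\ge (1-\eps)^{m-1}\alpha^m$) is exactly what makes the recursion close. The net effect is that your argument is a self-contained, slightly more elementary rendering of the paper's proof, trading the reusable abstraction of \cref{lem:soft_rect} for brevity in this specific application.
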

\Cref{theorem:low_disc_count} is a special case of a slightly more general result (\Cref{thm:low_disc_graph_count}) that we state and prove later.  \Cref{theorem:low_disc_count} is not our main counting lemma, and as explained later, it is not useful for strong density increment applications. However, it can be viewed as an extension of \Cref{theorem:CGW} to the sparse setting and provides a nice complement to \Cref{example:trianglesparse}. 

Kelley, Lovett, Meka~\cite{KLM} proved the first part of \Cref{theorem:low_disc_count} using a smooth analog of the dependent random choice method in the special case when $H$ is a complete bipartite graph. In \Cref{theorem:low_disc_count}, we generalize their result to all graphs using a similar proof.

\paragraph{A density increment counting lemma.} \Cref{theorem:low_disc_count} is not useful for obtaining density increment arguments since, in case (ii), it only provides a combinatorial rectangle with \emph{decreased} density.

Let us consider the original counting lemma (\cref{theorem:CGW}). By tracking the parameters in the proof of this theorem, one can easily show that \eqref{eq:stronger_assumption} implies the existence of $S,T \subseteq \cX$ with densities $\Omega_{m,\eps}(1)$ such that
\[
\Ex_{\substack{(\bfx,\bfy) \sim S\times T}}[A(\bfx,\bfy)] \geq \alpha+\Omega(\alpha^m
).
\]

While this result provides a density increase,   the magnitude of this increase is insignificant when  $\alpha$ is small. In many applications, having an increase of $\Omega(\alpha)$ is crucial. For example,  a density increment that multiplies the density by a factor of $1+\Omega(1)$ can be repeated at most $O(\log(2/\alpha))$ times. In contrast, a weaker density increment by a factor of $1+ \Omega(\alpha^{c})$ for $c>0$ needs to be repeated at least $\Omega(\alpha^{-c} \log(2/\alpha))$ times before it reaches a contradiction. Since each iteration decreases the size of the domain by a constant factor, this large number of iterations would only be possible if $\alpha \ge \Omega(1/\log |\cX|)$.

Our main graph counting lemma provides a significant density increment when $A$  does not contain many vertices with small degrees. 

\begin{theorem}[Main graph counting lemma]\label{thm:regular_count}
Let $H$ be a graph with $m$ edges, and let $\eps \in (0,1)$. There exists $\delta=\delta(\eps,m)>0$ such that for every graph $A$ of density $\alpha$ on a vertex set $\cX$, the following holds.  If 
\begin{equation*} 
    \left|t_H(A)- \alpha^m \right| \ge \eps \alpha^m,
\end{equation*}
then, either $A$ has $2^{-O_{m,\eps}(\log(2/\alpha))}|\cX|$ vertices with degrees at most $(1-\delta)\alpha |\cX|$, or there exist $S,T \subseteq \cX$ of densities at least $2^{-O_{m,\eps}(\log^2(2/\alpha))}$ satisfying
\[
\Ex_{\substack{(\bfx,\bfy) \in S\times T}}[A(\bfx,\bfy)] \geq (1+\delta) \alpha.
\]
\end{theorem}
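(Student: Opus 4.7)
The plan is to split on the sign of $t_H(A)-\alpha^m$ and reduce to \Cref{theorem:low_disc_count} in each case. Since $|t_H(A)-\alpha^m|\ge\eps\alpha^m$ implies either $t_H(A)\ge(1+\eps')^m\alpha^m$ or $t_H(A)\le(1-\eps')^m\alpha^m$ for $\eps'=\Theta(\eps/m)$, the hypotheses of one of the two cases of \Cref{theorem:low_disc_count} are met.

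The positive case ($t_H(A)\ge(1+\eps)\alpha^m$) is essentially immediate. Part (i) of \Cref{theorem:low_disc_count} yields a rectangle $S\times T$ with $|S||T|/|\cX|^2=\Omega_{m,\eps}(\alpha^{m+1})$ and $\Ex_{S\times T}[A]\ge(1+\delta)\alpha$. Since $|S|,|T|\le|\cX|$, each side has density $\Omega_{m,\eps}(\alpha^{m+1})=2^{-O_{m,\eps}(\log(2/\alpha))}$, which is polynomial and hence dominates the required quasipolynomial bound $2^{-O_{m,\eps}(\log^2(2/\alpha))}$. So this case alone establishes the conclusion without invoking the low-degree alternative.

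The negative case ($t_H(A)\le(1-\eps)\alpha^m$) is the core of the argument. Part (ii) of \Cref{theorem:low_disc_count} produces a rectangle $S_0\times T_0$ with density \emph{decrement} $\Ex_{S_0\times T_0}[A]\le(1-\delta)\alpha$, and we must convert this into a density \emph{increment} on some (possibly different) rectangle. Here the low-degree alternative enters: assume fewer than $2^{-C\log(2/\alpha)}|\cX|$ vertices have degree at most $(1-\delta_0)\alpha|\cX|$. Then most vertices in $S_0$ have degree $\approx\alpha|\cX|$, so $\Ex_{x\in S_0,y\in\cX}[A(x,y)]$ is approximately $\alpha$, and a law-of-total-expectation calculation gives
\[
\Ex_{S_0\times T_0^c}[A]\ \ge\ \alpha\left(1+\frac{\delta\tau-o(1)}{1-\tau}\right),\qquad \tau=\frac{|T_0|}{|\cX|}.
\]
If $\tau=\Omega(1)$ this yields the required multiplicative increment $\Omega(\delta)$ on the rectangle $S_0\times T_0^c$, and we are done.

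The main obstacle is that $\tau$ can be as small as $\alpha^{O(m)}$, in which case the flip above is insufficient. To enforce $\tau=\Omega(1)$, I would iterate a Kelley--Lovett--Meka-style smooth dependent random choice on the bipartite structure $S_0\times T_0$, at each step replacing the smaller side by a common neighborhood $N(v_1)\cap\cdots\cap N(v_t)$ of anchors chosen from the other side and updating accordingly, so as to push $\tau$ toward a constant. Each iteration enlarges $\tau$ while shrinking the opposite side by a factor $2^{-O(\log(2/\alpha))}$; after $O(\log(2/\alpha))$ rounds, $T_0$ becomes a constant fraction of $\cX$ and the remaining side still has density $2^{-O(\log^2(2/\alpha))}$, matching the target quasipolynomial bound. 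The assumption that $A$ has few low-degree vertices is used twice: it justifies the final flip to $T_0^c$, and it ensures that the anchor vertices drawn during the iteration behave typically (their neighborhoods have the expected size). The principal technical difficulty is tracking the decrement through these rounds; the smoothness built into the KLM framework is exactly what guarantees that each iteration degrades the decrement by only a $(1+o(1))$ factor, so the deviation survives $O(\log(2/\alpha))$ steps and the complement-flip at the end still delivers an $\Omega(\delta)$ multiplicative increment.
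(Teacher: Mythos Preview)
Your positive case is fine and in fact cleaner than the paper's route: applying \Cref{theorem:low_disc_count}(i) directly yields a rectangle whose sides have density polynomial in $\alpha$, which is stronger than the quasipolynomial bound claimed.

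The negative case, however, has a genuine gap. The iteration you sketch is not a workable mechanism: dependent random choice replaces a side by a \emph{common neighborhood}, which shrinks that side to density $\approx\alpha^t$, it does not enlarge it. You assert that ``each iteration enlarges $\tau$'', but you give no operation that could push $|T_0|/|\cX|$ from $\alpha^{O(m)}$ up toward a constant; intersecting with neighborhoods only makes things smaller. Nor is there any reason for the decrement $\Ex_{S_0\times T_0}[A]\le(1-\delta)\alpha$ to persist under such replacements---the KLM sifting argument preserves an \emph{upper} bound on the \emph{complement} of a good event inside a product set, which is a different structure. The complement-flip on its own only buys you a multiplicative gain of $\tau/(1-\tau)$, and nothing in your outline prevents $\tau$ from being $\alpha^{O(m)}$.

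The paper's argument is structurally different and does not pass through \Cref{theorem:low_disc_count}(ii) at all. Instead it proves a main technical lemma (\Cref{lem:MainTechnical}) by induction on the graph: one peels off an edge incident to a sink, applies H\"older to isolate a centered function $J(x,y)=F(x,y)-\Ex_{\bfy}F(x,\bfy)$, and shows $\norm{J}_{U(2,p)}$ is large. The crucial step that you are missing is the positivity trick (\Cref{lem:PSD}, built on the Kelley--Meka odd-moments inequality \Cref{prop:odd_moments}): because $\Ex_\bfy J(x,\bfy)=0$ for every $x$, large $\norm{J}_{U(2,p)}$ forces $\norm{1+J}_{U(2,p')}\ge 1+\Omega(\delta)$. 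Via the triangle inequality this yields either $\norm{F}_{U(2,p')}\ge 1+\Omega(\delta)$ (whence a density \emph{increment} rectangle through \Cref{corollary:KLMbipartite}) or a large $L^p$-deviation of the degree function (whence the low-degree alternative). This is how the paper converts a decrement---indeed, any deviation of either sign---into an increment, and it is the idea your proposal lacks.
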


\Cref{thm:regular_count} states that if $A$ is a graph with $|t_H(A)-\alpha^m| \ge \eps \alpha^m$, then either it has many vertices with small degrees, or the density of $A$ on some large rectangle is greater than $(1+\delta)\alpha$. The significance of this theorem lies in the fact that even when  $t_H(A)$ is \emph{smaller} than, say, $\alpha^m/2$, it provides a rectangle with \emph{increased} density. This feature makes the theorem suitable for density increment arguments, especially in additive number theory, where one naturally deals with regular graphs.

\cref{thm:regular_count} is a special case of a more general result (\cref{thm:main_graph_count}) which we state and prove later.

\subsection{Kelley--Meka bounds for binary systems of linear forms}\label{sec:intro_kelleyMeka}

 To study the number of occurrences of a linear pattern (e.g., $3$-progressions) in a subset of the interval $\{1,\ldots,M\}$, it suffices to embed $\{1,\ldots,M\}$ in $\mathbb{Z}_N$ for a prime $N=O(M)$ chosen sufficiently large to avoid wraparound.  Consequently, rather than working with the interval, one can focus on subsets of finite abelian groups $G$. The two important cases are $G=\mathbb{Z}_N$, where $N$ is a large prime and asymptotics are as $N$ tends to infinity, and $G=\mathbb{F}_q^n$, where $q$ is a fixed prime and asymptotics are as $n$ tends to infinity. The former is important for applications in number theory, and the latter is the setting that is most relevant to applications in combinatorics and computer science.

In 1953, Roth~\cite{roth1953certain} proved that a set of integers  $A \subseteq \Z_N$ without non-trivial 3-progressions must have density $O(1/\log \log N)$.  The quest to determine the optimal bound in Roth's theorem stands as a central problem in additive number theory, which has been studied extensively over the past seven decades (notably~\cite{MR0889362,MR1100788,MR1726234,MR2811612,bloom2021breaking,KM23}). For a long time, $1/\log N$ was perceived as a barrier for all the available techniques.  In~\cite{bloom2021breaking}, Bloom and Sisask overcame this barrier and established the bound $O(1/\log^{1+\eps} N)$ for some small $\eps>0$. More recently, in a remarkable breakthrough, Kelley and Meka~\cite{KM23} took a huge leap forward and improved the bound to  $2^{-\Omega(\log^{1/12}N)}$. Subsequently, this bound was refined to  $2^{-\Omega(\log^{1/9}N)}$ by Bloom and Sisask~\cite{BS23improvment}.  On the other hand, Behrend's classical construction~\cite{MR0018694} shows that there are sets with density   $2^{-O(\log^{1/2}N)}$ that are free of non-trivial $3$-progressions.

Similarly to all previous approaches, \cite {KM23} uses a density increment argument.  Note that the density of 3-progressions in a set $A \subseteq \mathbb{Z}_N$ is given by 
\begin{equation}\label{equation:3AP_def}
t_{\textrm{3AP}}(A)\coloneqq\Ex_{\bfx,\bfy\in \Z_N} \left[A(\bfx) A\left(\bfx+\bfy\right) A(\bfx+2\bfy)\right], 
\end{equation}
where we identified $A$ with its indicator function. 
If $A$ is a random set of density $\alpha$, then we expect \(t_{\textrm{3AP}}(A)\approx \alpha^3\).
Kelley and Meka~\cite{KM23} show that if a set $A$ with density $\alpha$ satisfies 
\begin{equation*}
|t_{\textrm{3AP}}(A)-\alpha^3| \ge  \frac{\alpha^3}{2}, 
\end{equation*}
then restricting to some large low-rank Bohr set  $B$ increases the relative density of $A$ by a multiplicative factor of $1+\Omega(1)$. Repeating this argument iteratively for at most $O(\log(2/\alpha))$ times shows that if $A$ does not contain any non-trivial $3$-progressions, then $\alpha \leq 2^{-\Omega(\log^{1/12}{N})}$.

Most linear patterns, such as $4$-progressions, are fundamentally beyond the scope of such a  Bohr set counting lemma: there are sets $A$ with density $\alpha = \Omega(1)$ that satisfy $|t_{\textrm{4AP}}(A)-\alpha^4|= \Omega(1)$, but the relative density of $A$ in every large low-rank Bohr set remains extremely close to $\alpha$, see \cite{MR1844079}.  

\begin{remark}
In 1975, Szemer{\'e}di~\cite{szemeredi1975sets} extended Roth's $o(1)$ bound using a completely new approach to arithmetic progressions of arbitrary length. However, in contrast to Roth's Fourier-analytic proof, all the various known proofs of Szemer{\'e}di's theorem give much weaker bounds. Indeed, it was considered a major breakthrough when Gowers~\cite{MR1844079} proved an upper bound of $1/(\log \log N)^{2^{-2^{k+9}}}$ on the density of sets of integers without $k$-term arithmetic progressions.  Very recently, Leng, Sah, and Sawhney posted a preprint~\cite{leng2024improved}  improving Gowers' bound to an impressive bound of $O(2^{-(\log\log N)^{c_k}})$, where $c_k>0$ is a constant depending on $k$. 
\end{remark}

Nonetheless, there is an important class of linear patterns that lie somewhere between $3$-progressions and $4$-progressions in terms of their ``complexity''. These linear patterns, defined by \emph{binary systems of linear forms}, stem from the graph-theoretic approach of Ruzsa and Szemer\'edi~\cite{MR0519318} to Roth's theorem. 
Ruzsa and Szemer\'edi discovered a versatile proof of Roth's theorem by establishing and applying the so-called triangle removal lemma, which is a consequence of Szemer\'edi's regularity lemma. Their proof uses the expression 
\begin{equation} 
\label{eq:RuzsaSzemeredi}
t_{\textrm{3AP}}(A)=\Ex_{\bfx,\bfy,\bfz\in \Z_N} \left[A(2\bfx-2\bfy) A\left(\bfx-\bfz\right) A(2\bfy-2\bfz)\right]  
\end{equation}
to represent the density of  3-progressions in $A$ as the density of triangles in a tripartite graph constructed from $A$, and then applies these graph theoretic tools. 

In~\cite{MR0932119}, Erd\H{o}s, Frankl, and R\"{o}dl proved the graph removal lemma, which generalizes the triangle removal lemma to all graphs. This generalization extends the graph-theoretic approach to a large class of linear patterns, namely those defined by \emph{binary systems of linear forms}. Essentially, these linear patterns have a graph-like structure, similar to how $3$-progressions have a triangle-like structure. We will formally define these systems in \Cref{{subsection:notationandterm}}, but for now, it suffices to mention that the term \emph{binary} indicates that each linear form is supported on exactly two variables.  Note that $(2x-2y,x-z,2y-2z)$, which captures $3$-progressions, satisfies this property. As another notable example of a binary system, consider
\begin{equation}\label{eq:K4}
  t_{K_4}(A) \coloneqq\Ex_{\bfx,\bfy,\bfz,\bfw\in \Z_N} \left[A(\bfx+\bfy)A(\bfx+\bfz)A(\bfx+\bfw)A(\bfy+\bfz)A(\bfy+\bfw)A(\bfz+\bfw)\right],  
\end{equation}
which corresponds to the density of $K_4$ (the complete graph on $4$ vertices) in the Cayley sum graph $\Cay(\mathbb{Z}_N,A)$, whose edges are $\{x,y\}$ for $x+y \in A$.

The graph-theoretic approach relies on Szemer\'edi's regularity lemma, and therefore results in an extremely weak bound compared to Roth's original bound. However, one can use a common technique in additive number theory and extremal combinatorics, based on a few careful applications of the Cauchy--Schwarz inequality, to extend Roth's original Fourier-analytic proof and upper bound to all binary systems.

Unfortunately, the aforementioned Cauchy--Schwarz approach incurs too much loss to extend Kelley and Meka's upper bound to such a broad class of linear patterns. 

Previous techniques developed by Green and Tao (in their work on $4$-progressions \cite{greenT2009new}) and by Dousse \cite{dousse2013generalisation} extend to binary systems\footnote{and more generally to systems of complexity $1$.}, and for such a broad class of systems, the strongest known bound is $1/\log^c N$ (where $c$ is a small constant depending on the system), established by Shao \cite{Shao14finding}.

In fact, a priori, it is unclear whether Kelley and Meka's techniques could be applied to, for instance, $t_{K_4}(A)$. Let us briefly review the key steps of their proof:
\begin{enumerate}
\item Their proof starts from the assumption $|t_{\textrm{3AP}}(A)-\alpha^3| \ge \eps \alpha^3$, and notes that 
\[t_{\textrm{3AP}}(A)-\alpha^3 = \inp{(A-\alpha)*(A-\alpha)}{C},  \] 
where $C=\set{2a \mid a \in A}$. 
\item (H\"older step): Since the density of $C$ is $\alpha$, one can use H\"older's inequality to show that the right-hand side is at most $O(\alpha \norm{(A-\alpha)*(A-\alpha)}_p)$ for $p \approx \log(2/\alpha)$.  

\item (Positivity/Significant Increment): Then, in one of the key steps of the proof, the proof uses the fact that the Fourier coefficients of the convolution $(A-\alpha)*(A-\alpha)$ are positive to deduce from $\norm{(A-\alpha)*(A-\alpha)}_p=\Omega(\alpha^2)$ that 
\[\norm{A*A}_p = \left(1+\Omega(1)\right) \alpha^2.\]
\item (Sifting/Dependent random choice): This key step involves the application of the dependent random choice technique to the lower bound on $\norm{A*A}_p$.
\item (Almost periodicity): At this point, one can combine the result of the previous step with the available almost periodicity result of~\cite{SandersBogo} to achieve the desired density increment.
\end{enumerate}

Note that several steps in the above proof rely on the properties of convolution, and while one can express $t_{\textrm{3AP}}(A)$ by a simple formula involving convolution, this is not true for other more complex binary systems such as $t_{K_4}(A)$. \textit{Is it still true that \(|t_{K_4}(A)-\alpha^6| = \Omega(\alpha^6)\) implies a significant density increment in a large low-rank Bohr set?}
If \(|t_{K_4}(A)-\alpha^6| = \Omega(\alpha^6)\), then our counting lemma (\cref{thm:regular_count}) provides two sets $S$ and $T$ with densities $2^{-O(\log^2 (2/\alpha))}$ such that
\begin{equation}
\label{eq:spreadness_example}
(1+\Omega(1))\alpha_S \alpha_T \alpha \le \Ex_{\bfx,\bfy \in \mathbb{Z}_N} \left[S(\bfx)T(\bfy)A(\bfx+\bfy)\right],
\end{equation}
where $\alpha_S$ and $\alpha_T$ are the densities of $S$ and $T$, respectively. 

This is promising, as $\Ex\left[ S(\bfx)T(\bfy)A(\bfx+\bfy)\right]$ is a much simpler expression than $t_{K_4}(A)$, and moreover, we have shown in \eqref{eq:spreadness_example} that it is significantly larger than its ``expected'' value 
$\alpha_S \alpha_T \alpha$.   
By establishing a sifting step for general finite abelian groups that is applicable to \eqref{eq:spreadness_example}, we prove the following theorem.

\begin{restatable}[Kelley--Meka-type bounds over finite abelian groups]{thm}{GeneralizedKelleyMekaAbelian}
\label{thm:L-freeAbelian}
Let $\cL$ be a binary system of linear forms over a finite abelian group $G$, and suppose $|G|$ is coprime with all the coefficients in the linear forms. If $A \subseteq G$  is free of translations of non-degenerate instances of $\cL$, then
\begin{itemize}
    \item[(i)]  \label{itm:L-freeAbelian-general}
    \hfill
    \(\displaystyle |A| \le |G|\cdot 2^{-\Omega_{\cL}(\log^{1/16}|G|)}. \)
    \hfill \mbox{}
    \item[(ii)]\label{itm:KM_2_degenarate} If, in addition, the underlying graph of $\cL$ is $2$-degenerate, then
    \[|A| \le |G|\cdot 2^{-\Omega_{\cL}(\log^{1/9}|G|)}. \]
\end{itemize}

\end{restatable}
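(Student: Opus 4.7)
The plan is a density-increment argument in the Kelley--Meka framework, with the sparse counting lemma \Cref{thm:regular_count} (in fact its per-edge variant \Cref{thm:main_graph_count}) playing the role of the counting step. Let $A \subseteq G$ have density $\alpha$ and avoid translations of non-degenerate instances of $\cL$. If $H$ is the underlying graph of $\cL$ on $k$ vertices with $m$ edges, each edge $\{i,j\}$ carries a linear form $a_{ij}x_i + b_{ij}x_j$; coprimality of the coefficients with $|G|$ turns each relation $A(a_{ij}x_i+b_{ij}x_j)=1$ into a Cayley-sum-type graph on $G$ of density $\alpha$. The $\cL$-count in $A$ is the corresponding generalized $H$-density of these graphs on $G$, and the translation-freeness hypothesis forces this count to equal only its degenerate contribution, which is much smaller than $\alpha^m$ in the density range we need to rule out. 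Hence the counting lemma is triggered with constant $\eps$.

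Since Cayley-sum graphs are automatically regular, the low-degree alternative of \Cref{thm:regular_count} is vacuous, and we obtain subsets $S, T \subseteq G$ of size at least $|G| \cdot 2^{-O(\log^2(2/\alpha))}$ together with coefficients $a,b$ coprime to $|G|$ such that
\[
\Ex_{x \in S,\, y \in T}[A(ax + by)] \;\ge\; (1+\delta)\alpha.
\]
A change of variables absorbs $a, b$ and yields a spread inequality for $A$ of exactly the form \eqref{eq:spreadness_example}. We then run the sifting / smooth dependent-random-choice argument of Kelley--Lovett--Meka~\cite{KLM}, adapted to arbitrary finite abelian groups and to this asymmetric rectangle hypothesis, followed by Sanders' Bogolyubov-type almost-periodicity theorem. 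The output is a Bohr set $B'$ of size $|G| \cdot 2^{-\polylog(1/\alpha)}$ and rank $\polylog(1/\alpha)$ on which the relative density of $A$ is at least $(1+\Omega(1))\alpha$.

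Iterating this increment yields a contradiction after $O(\log(2/\alpha))$ rounds unless $\alpha$ is small; tracking the Bohr-set parameters (radius contracting polynomially, rank growing polylogarithmically per step) delivers $\alpha \le 2^{-\Omega_{\cL}(\log^{1/16}|G|)}$ and proves (i). For (ii), when $H$ is $2$-degenerate one can order its vertices so that each new vertex joins the previous ones by at most two edges; this permits the counting step to use one fewer Cauchy--Schwarz layer and reduces the polylogarithmic loss per iteration, yielding the sharper exponent $\alpha \le 2^{-\Omega_{\cL}(\log^{1/9}|G|)}$ in line with the Bloom--Sisask refinement for three-term progressions.

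The main obstacle is the sifting step. Kelley--Meka's treatment of $3$-APs exploits Fourier-positivity of $(A-\alpha)*(A-\alpha)$, a feature tied to the symmetric convolution structure of arithmetic progressions. Our counting lemma supplies only an asymmetric, two-set rectangle increment involving a generic linear combination, so the real technical work is to adapt the dependent-random-choice and sifting machinery of~\cite{KLM} to this weaker input while remaining compatible with almost periodicity over an arbitrary finite abelian group, without eroding the density gain during the transfer.
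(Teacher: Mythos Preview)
Your high-level strategy matches the paper's: apply the counting lemma to get a rectangle increment, upgrade it via H\"older, dependent random choice, and almost periodicity to a Bohr-set increment, and iterate. But there is a real gap in the iteration.

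Your claim that the low-degree alternative of \Cref{thm:main_graph_count} is vacuous because ``Cayley-sum graphs are automatically regular'' is correct only on the full group $G$. After one increment you are working inside a regular Bohr set $B$, and the graphs $G_{ij}(x,y)=A'(\lambda_{ij}x+\eta_{ij}y)$ restricted to $B\times B$ are \emph{not} regular: for $x\in B$ the degree $\Ex_{y\in B}A'(\lambda_{ij}x+\eta_{ij}y)$ depends on where the translate $\lambda_{ij}x+\eta_{ij}\cdot B$ sits relative to $A'$, and this can vary by a constant factor as $x$ ranges over $B$. So the low-degree branch can fire, and it does not by itself yield a density increment on a sub-Bohr set. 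The paper handles this with a genuinely additional ingredient: a multi-scale construction choosing regular Bohr sets $B^{(1)}\subset\cdots\subset B^{(k)}$ at geometrically separated widths so that $\lambda_{ij}\cdot B^{(i)}$ lies inside a tiny dilate of $\eta_{ij}\cdot B^{(j)}$ whenever $i<j$. Combined with \Cref{lem:BohrSetSumUniform}, this forces each $G_{ij}$ on $B^{(i)}\times B^{(j)}$ to have all row-averages within $(1\pm O(\gamma))\alpha_*$, which rules out the low-degree alternative. One also needs the shift argument of \Cref{lem:BohrSet_Uniform} to arrange that the density of $A$ on every Bohr set in the family $\{\lambda_{ij}\cdot B^{(i)},\eta_{ij}\cdot B^{(j)}\}$ is already close to $\alpha_*$ (or else one has the increment for free). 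This whole Bourgain-style multi-scale apparatus is what makes the iteration go through, and it is absent from your proposal. Your description of (ii) is morally right --- $2$-degeneracy lets one orient $H$ with maximum indegree $2$, so \Cref{lem:MainTechnical} outputs a $U(2,p)$ bound directly and one bypasses the rectangle step (whose $2^{-O(\log^2(2/\alpha))}$ density is the bottleneck in (i)) --- but the same Bohr-set regularity machinery is still needed there.
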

Here, an undirected graph $H$ is called \emph{$k$-degenerate} if every subgraph of $H$ has a vertex of degree at most $k$. 
\begin{remark}
It is worth noting that even in the case of the group $G= \F_q^n$, despite the success of the polynomial method in proving strong bounds for Roth's theorem~\cite{MR3583357,MR3583358}, the bound of \cref{thm:L-freeAbelian} is new. 
In 2017, \cite{MR3583357,MR3583358} used the polynomial method to prove that any subset of $G=\F_q^n$ without $3$-progressions must be of size at most $c^n$, where $c=c(q)<q$ is a fixed constant.  This bound is stronger than Kelley and Meka's bound, and indeed, the significance of Kelley and Meka's result lies in the integer case, where there is no known analog of the polynomial method. 
However, even in the setting of $\F_q^n$, the polynomial method appears to be limited in dealing with general binary systems. In a recent article~\cite{MR4684863}, Gijswijt extended the polynomial method to a larger class of systems of linear forms. However, even this class does not include the binary systems whose underlying graphs are dense graphs such as $K_4$ or larger cliques. For cases such as \eqref{eq:K4},  the strongest known previous bounds were of the form $q^{n-\Omega_q(\log n)}$. \Cref{thm:L-freeAbelian} improves these bounds to $q^{n-\Omega_q(n^{1/16})}$. 
\end{remark} 
  
\begin{remark}
In the case of 3-progressions, currently the strongest known bound for $G=\mathbb{Z}_N$ is $|A| \le |G|\cdot 2^{-\Omega(\log^{1/9}|G|)}$, due to Bloom and Sisask~\cite{BS23improvment}, which is stronger than what we obtain for general binary systems in \cref{thm:L-freeAbelian}~(i). To briefly explain this disparity, note that we need to apply our counting lemma to obtain  \eqref{eq:spreadness_example}, and the two resulting sets, $S$ and $T$, are of densities $2^{-O(\log^2 (2/\alpha))}$, which is small compared to $\alpha$. In contrast, in the case of 3-progressions, one can directly work with $\|A*A\|_p$, which leads to a stronger bound.  Nevertheless, in \Cref{itm:KM_2_degenarate}~(ii) we show that if the underlying graph of the binary system is $2$-degenerate (e.g., the triangle graph for 3-progressions), we can obtain the same upper bound $|A| \le |G|\cdot 2^{-\Omega(\log^{1/9}|G|)}$.  
\end{remark}

\subsection{Tur\'an's theorem for Cayley sum graphs over abelian groups}
Here, we present a quick application of \Cref{thm:L-freeAbelian} to Tur\'an's theorem, a classical result in extremal graph theory regarding the largest edge density of a graph that does not contain any copies of $K_{r}$ (the complete graph on $r$ vertices), for some given $r$. Tur\'an proved that this density is at most $1-\frac{1}{r-1}+o(1)$, which is tight, as witnessed by a complete $(r-1)$-partite graph where the parts have an almost equal number of vertices.

Recall that given a finite abelian group $\Gamma$ and a set $A\subseteq \Gamma$, the Cayley sum graph $\Cay(\Gamma,A)$ is the graph with vertex set $\Gamma$ and edge set $\{\{x,y\} \mid x+y \in A\}$.   \Cref{thm:L-freeAbelian}~(i) implies a strong upper bound on the edge density of $G$ in Tur\'an's theorem if $ G$ is further assumed to be a Cayley sum graph of a finite abelian group $\Gamma$ with $2\not| \ |\Gamma|$.

\begin{corollary}\label{cor:cayleyTuran}
Let $r \ge 2$ be a fixed positive integer. Let $\Gamma$ be a finite abelian group such that $2\not| \ |\Gamma|$, and let $A\subseteq \Gamma$. If $\Cay(\Gamma,A)$ does not contain any copies of $K_{r}$, then its edge density is at most $2^{-\Omega_r(\log^{1/16}|\Gamma|)}$.
\end{corollary}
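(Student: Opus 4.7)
The plan is to encode $K_r$-subgraphs of $\Cay(\Gamma,A)$ as non-degenerate instances of a binary system of linear forms and then invoke \Cref{thm:L-freeAbelian}. Specifically, consider the system
\[\cL \;\defeq\; \{\,L_{ij}(x_1,\ldots,x_r) \defeq x_i + x_j \,:\, 1 \le i < j \le r\,\}.\]
Every form depends on exactly two variables, every coefficient is $\pm 1$ (hence coprime to $|\Gamma|$), and the underlying graph of $\cL$ is $K_r$. A copy of $K_r$ in $\Cay(\Gamma,A)$ is by definition an $r$-tuple $(x_1,\ldots,x_r)$ of pairwise distinct elements of $\Gamma$ with $x_i+x_j \in A$ for every $i<j$, i.e.~a non-degenerate instance of $\cL$ contained in $A$.

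To apply \Cref{thm:L-freeAbelian} I first have to upgrade ``no non-degenerate instance inside $A$'' to ``no translate of a non-degenerate instance inside $A$'', which is exactly where the odd-order assumption enters. Since $\gcd(2,|\Gamma|) = 1$, the element $t/2$ is well defined for every $t \in \Gamma$. If distinct $x_1,\ldots,x_r$ satisfied $x_i + x_j + t \in A$ for all $i<j$, then setting $y_i \defeq x_i + t/2$ produces distinct $y_i$'s with $y_i + y_j = x_i + x_j + t \in A$, yielding a $K_r$ in $\Cay(\Gamma,A)$ and contradicting the hypothesis. Thus $A$ avoids all translates of non-degenerate instances of $\cL$, and \Cref{thm:L-freeAbelian}(i) gives
\[|A| \;\le\; |\Gamma|\cdot 2^{-\Omega_r(\log^{1/16}|\Gamma|)}.\]

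It remains to convert this bound on $|A|/|\Gamma|$ into a bound on the edge density of $\Cay(\Gamma,A)$. For each fixed $a \in A$, the equation $x+y=a$ with $x \ne y$ has exactly $|\Gamma|-1$ ordered solutions (the oddness of $|\Gamma|$ ensures $2x=a$ has a unique solution, which we discard), so $a$ contributes $(|\Gamma|-1)/2$ unordered edges; different $a$'s contribute disjoint edges. Hence $\Cay(\Gamma,A)$ has exactly $|A|(|\Gamma|-1)/2$ edges out of a possible $\binom{|\Gamma|}{2}=|\Gamma|(|\Gamma|-1)/2$, so its edge density equals $|A|/|\Gamma|$ and the corollary follows. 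The only substantive ingredient is the black-box application of \Cref{thm:L-freeAbelian}; the one thing to be careful about is lining up the translation/non-degeneracy conventions so that the graph-theoretic notion of $K_r$-freeness really matches the additive hypothesis of that theorem, and this is precisely the role of the odd-order hypothesis.
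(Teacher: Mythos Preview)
Your proof is correct and follows the same approach as the paper: both use the $t/2$ shift (valid since $|\Gamma|$ is odd) to reduce translated instances of $\cL=\{x_i+x_j\}$ to ordinary ones and then invoke \Cref{thm:L-freeAbelian}, and your explicit edge-density computation is a detail the paper leaves implicit. One small wrinkle worth tightening: in the paper a \emph{non-degenerate} instance means the \emph{output values} $x_i+x_j$ are pairwise distinct, not that the $x_i$ are; for $r\ge 3$ the former implies the latter (if $x_i=x_j$ then $x_i+x_k=x_j+x_k$ for any third index $k$), so your argument still goes through verbatim, and for $r=2$ the statement is trivial.
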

\begin{proof}
    The system of linear forms associated with $r$-clique is $\cL = \{x_i+x_j : i\neq j, i,j\in [r]\}$.
    Next, we show that if $2\not | \ |\Gamma|$, then $\cL$ is translation invariant (see \Cref{definition:ti} on page~\pageref{definition:ti}), and therefore, we can apply \Cref{thm:L-freeAbelian} to conclude the corollary.
    We need to show that every shift of an $r$-clique is also an $r$-clique. This follows from the identity $x_i+x_j+c = (x_i+ c/2)+(x_j+c/2)$ for all $i\neq j$, which is valid if $2\not| \ |\Gamma|$.
\end{proof}

Note that the requirement $2\not| \ |\Gamma|$ is crucial. For example, let $\Gamma = \mathbb{Z}_{2N}$ and let $A$ be the set of all odd numbers in $\mathbb{Z}_{2N}$, namely, $A = \{2i+1 \bmod 2N: i\in [N]\}\subseteq \Gamma$. The Cayley sum graph $\Cay(\Gamma,A)$ does not contain any triangles because for any three elements $x,y,z\in \mathbb{Z}_{2N}$, at least one of $x+y,x+z,y+z$ is even, hence not in $A$. However, $\Cay(\Gamma, A)$ has edge density $\approx\frac{1}{2}$.

\paragraph{Acknowledgment} 
We wish to thank Zander Kelley for bringing \cite[Theorem 4.10]{KM23} to our attention, which proved invaluable in extending \Cref{thm:L-freeAbelian} from the 2-degenerate case to all binary systems. We wish to thank Tsun-Ming Cheung and Shachar Lovett for their valuable feedback on an earlier draft of this paper. K.H. would like to thank Anup Rao for valuable conversations. We wish to thank Dion Gijswijt for bringing \cite{MR4684863} to our attention. We also thank Seth Pettie for pointing out a mistake in a previous version of the paper.

This project has received funding from the European Union's Horizon 2020 research and innovation program under grant agreement No~802020-ERC-HARMONIC.
This project was partially carried out at the Simons Institute for the Theory of Computing, UC Berkeley, while the authors were participating in the program Analysis and TCS in Summer 2023.

\paragraph{Layout of the paper.} In the remainder of the introduction, we introduce some notation and terminology on systems of linear forms.  In \Cref{section:graphcounting}, we discuss the graph counting lemmas and their proofs. In \Cref{section:KMbounds}, we discuss the Kelley--Meka-type bounds for binary linear systems. In \Cref{section:concluding}, we discuss some concluding remarks regarding the limitations of these methods in relation to other systems of linear forms. 

\subsection{Notation and terminology}\label{subsection:notationandterm}
  The function $\log$ is the base~$2$ logarithm.  For a positive integer $m$, we denote $[m] \defeq \{1,\ldots,m\}$.  We denote random variables with bold font. 

Let $\cX$ be a finite set.
Given a function $f\colon \cX\to \mathbb{R}$ and $p \in [1,\infty)$, let $\|f\|_p = \left( \Ex [|f(\bfx)|^p]\right)^{1/p}$, and let $\|f\|_\infty = \max_x |f(x)|$.  

 We use the standard Bachmann--Landau asymptotic notation: $O(\cdot), \Omega(\cdot), \Theta(\cdot),o(\cdot)$, and $\omega(\cdot)$. Sometimes, we will use these notations in combination with subscripts to indicate that the hidden constants may depend on the parameters in the subscript. For example, $O_m(1)$ means bounded by a constant $c$ that may depend on $m$. We sometimes write $f \lesssim g$ to indicate $f=O(g)$.

\paragraph{Systems of linear forms.} A \emph{linear form} in $d$ variables is a vector $L=(\lambda_1,\ldots,\lambda_d) \in \Z^d$. Given an abelian group $G$, the linear form $L$ defines a linear map $G^d \to G$ by $L(x_1,\ldots,x_d) \defeq \sum_i \lambda_i x_i$. A \emph{system of $m$ linear forms in $d$ variables} is a tuple $\cL=(L_1,\ldots, L_m)$ of linear forms. A tuple $(a_1,\ldots,a_m) \in G^m$ is called an \emph{instance} of $\cL$ if there exists $x \in G^d$ with $\cL(x) \defeq (L_1(x),\ldots,L_m(x))=(a_1,\ldots,a_m)$. It is called a \emph{non-degenerate instance} if additionally $a_1,\ldots,a_m$ are all distinct.

We often require that $|G|$ be coprime with all the coefficients of the linear forms that define $\cL$.  Otherwise, it would be possible to have sets that do not contain instances of even a single linear form. For example, let $L(x) = 2x$ and $G = \mathbb{Z}_m$ for some even number $m$. Then the set $A=\{a\in G: a\bmod 2 \equiv 1 \}$ does not contain any instance of $L(x)$ for $x\in G$.    

For an integer $\lambda$ with $(\lambda,|G|) = 1$, the map  $x \mapsto \lambda x$ is a $G$-automorphism. Let $\lambda^{-1}$ be a positive integer such that $\lambda^{-1}(\lambda x) = x$ for all $x\in G$. This is guaranteed to exist because $(\lambda,|G|)=1$ implies the existence of an integer $\lambda^{-1}$ with $ \lambda^{-1}\lambda \equiv 1\mod |G|$.

Let $\cL=(L_1,\ldots,L_m)$ be a system of linear forms in $d$ variables and let $A\subseteq G$. Let $\bfx_1,\ldots,\bfx_d$ be independent random variables taking values in $G$ uniformly at random. The probability that  $\cL(\bfx_1,\ldots,\bfx_d)\in A^m$ is given by the expectation  
\[t_{\cL}(A) \defeq \Ex \left[ \prod_{i=1}^m A(L_i(\bfx_1,\ldots,\bfx_d)) \right].\]

\paragraph{Translation-invariance.} 
When studying linear patterns,  it is natural to consider systems whose instances are invariant under translations.  

\begin{definition}[Translation-Invariance]\label{definition:ti}
A system  $\cL=(L_1,\ldots, L_m)$ in $d$ variables is \emph{translation-invariant} over a finite abelian group $G$ if for every instance $(a_1,\ldots,a_m)$ of $\cL$ and every $c$, the tuple $(c+a_1,\ldots,c+a_m)$ is also an instance of $\cL$. 
\end{definition}

For example, $(a,a+b,a+2b)$, which represents $3$-progressions, is translation-invariant over every finite abelian group. As another example,  $(x-y,y-z,2x-2z)$ is translation-invariant over $\mathbb{Z}_3$, but it is not translation-invariant over $\mathbb{Z}_5$.

Given a system $\cL=(L_1,\ldots, L_m)$ of linear forms (not necessarily translation-invariant) in $d$ variables $x_1,\ldots,x_d$, we are interested in sets $A \subseteq G$ that are free of \emph{translations} of non-degenerate instances of $\cL$. That is, there are no $c \in G$ and non-degenerate instance $(a_1,\ldots,a_m)$ of $\cL$ such that $(c+a_1,\ldots,c+a_m) \in A^m$. Equivalently, one can introduce an additional variable $x_{d+1}$, and require that $A$ is free of non-degenerate instances of the \emph{translation-invariant} system $\cL'$, where 
\[ \cL'(x_1,\ldots,x_{d+1}) \defeq (x_{d+1}+L_1(x_1,\dots,x_d),\ldots,x_{d+1}+L_m(x_1,\dots,x_d)). \]

\begin{definition}[Binary Systems of Linear forms] 
\label{def:binary}
A  system of linear forms  $\cL=(L_1,\ldots, L_m)$ in the $d$ variables $x_1,\ldots,x_d$  is \emph{binary} if every linear form in $\cL$ is supported on exactly two variables, and moreover no two linear forms in $\cL$ are supported on the same two variables.
\end{definition}

In other words, every  $L_i$ is of the form $\lambda_i x_{a_i} + \eta_i x_{b_i}$ where $a_i<b_i$ and $a_i,b_i \in \{1,\ldots,d\}$, and moreover, $\{a_i,b_i\} \neq \{a_j,b_j\}$ if $i \neq j$.
Such a binary system naturally defines an oriented acyclic graph $H$ with vertex set $\{1,\ldots,d\}$ and an edge $(a_i,b_i)$ for the linear form $L_i =\lambda_i x_{a_i} + \eta_i x_{b_i}$. We say that $H$ is the \emph{underlying graph} of $\cL$.  
In the case of $3$-progressions, the underlying graph $H$ is an oriented triangle, as described below.
\begin{example}[3-progressions as a binary system of linear forms]
Let $N>2$ be an odd number. One can apply the change of variables  $a=2x-2y$ and $b=2y-x-z$ to $(a,a+b,a+2b)$ and capture $3$-progressions in $\Z_N$ with the system of linear forms $\mathcal{L} = (2x-2y,x-z,2y-2z)$. The underlying graph $H$ of $\mathcal{L}$ is the oriented triangle $\{(1,2),(1,3),(2,3)\}$. 
\end{example}

\section{Graph counting lemma}\label{section:graphcounting}

We will adopt a more general framework that encompasses oriented graphs. 
Recall that a directed graph is an \emph{oriented graph} if it contains no opposite pairs of directed edges, or equivalently, it does not contain any $2$-cycles.   Let $H=(V_H,E_H)$ be a fixed oriented graph  with vertex set $V_H=\{1,\ldots,k\}$. Let $\cX$ be a finite set. For every  $A \subseteq \cX \times \cX$, define 
\[t_H(A) \defeq \Ex  \left[\prod_{uv \in E_H} 
A(\bfx_u,\bfx_v)\right],\]
where $\bfx_v$ for $v \in V_H$ are independent random variables taking values in $\cX$ uniformly at random. 
One can interpret $A$ as a directed graph with vertex set $\cX$ and edges $(u,v)\in A$. Then $t_H(A)$ is the probability that a random map from the vertices of $H$ to the vertices of $A$ is a graph morphism, i.e., it maps every edge of $H$ to an edge in $A$. It is worth noting that, more generally, one can define $t_H(f)$ for a function $f\colon \cX\times\cX\to[0,1]$. Our counting lemma is based on the following notion of pseudo-randomness. 

\begin{definition}[Spreadness for graphs]
Let $r\ge 1$ and $ \delta>0$, and let $\cX$ and $\cY$ be finite sets.  A set $A \subseteq \cX \times \cY$ with density $\alpha$ is $(\delta,r)$-\emph{spread} if, for all subsets $S$ and $T$ of density at least $2^{-r}$, we have
\[\Ex_{\substack{\bfx \sim S \\ \bfy \sim T}} [A(\bfx,\bfy)] \le (1+\delta) \alpha.\]
\end{definition}
 
The following counting lemma states that if $A$ is spread and does not have many vertices with small degrees, then $t_H(A) \approx \alpha^m$ for all oriented graphs $H$ on $m$ edges.  We state the result in a more general setting.  

\begin{restatable}[Main graph counting lemma]{thm}{countingLemmaGraphs}
\label{thm:main_graph_count}
Let $H=([k],E)$ be an oriented graph with $m$ edges such that $(u,v) \in E$ implies $u<v$, and let $\alpha>0$.  For every $0<\eps<1$, there exists $\delta=\delta(\eps,m)>0$ such that the following holds. Let $\cX_1,\ldots,\cX_k$ be finite sets, and for every edge $(u,v) \in E$, suppose $A_{uv} \subseteq \cX_u \times \cX_v$ has density $\alpha_{uv} \ge \alpha$. If
\[ 
\left|\Ex \prod_{(u,v) \in E} A_{uv}(\bfx_u,\bfx_v) - \prod_{(u,v)\in E}\alpha_{uv} \right| \ge \eps \prod_{(u,v)\in E}\alpha_{uv},
\] 
then for some  $(u,v) \in E$, at least one of the following cases holds:
\begin{enumerate}
\item \label{itm:main_graph_cound_density_rectangle_increment} There exist $S \subseteq \cX_u$ and $T \subseteq \cX_v$  of densities at least $2^{-O_{m,\eps}(\log^2(2/\alpha))}$ such that   \[\Ex_{\substack{(\bfx,\bfy) \in S\times T}}[A_{uv}(\bfx,\bfy)] \ge (1+\delta) \alpha_{uv} \]
\item \label{itm:main_graph_cound_bounded_degrees_left} There exists $S \subseteq \cX_u$ of density at least $2^{-O_{m,\eps}(\log(2/\alpha))}$ such that  
\begin{equation*}     
    \Ex_{\bfx \in \cX_v}[A_{uv}(z,\bfx)] \le (1-\delta) \alpha_{uv} \qquad \forall z \in S
\end{equation*}
\end{enumerate}
\end{restatable}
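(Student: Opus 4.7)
The plan is induction on the number of edges $m$, using a telescoping identity to peel off one edge at a time and reduce to a bipartite weighted-deviation problem on that edge. The base case $m=0$ is vacuous since $t_H = 1 = \prod_e \alpha_e$. For the inductive step, I would fix any edge ordering $e_1, \dots, e_m$ and write
\[
t_H(A) - \prod_{j=1}^m \alpha_{e_j} = \sum_{i=1}^m \Ex\!\left[\Bigl(\prod_{j<i} A_{e_j}\Bigr)\bigl(A_{e_i}-\alpha_{e_i}\bigr)\Bigl(\prod_{j>i}\alpha_{e_j}\Bigr)\right],
\]
so that the hypothesis forces at least one summand to have magnitude $\geq (\eps/m)\prod_j\alpha_{e_j}$. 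For the offending edge $e_i=(u,v)$, setting $g(x_u,x_v) := \Ex_{x_\ell : \ell \neq u,v}[\prod_{j<i} A_{e_j}]$ recasts this as a weighted bipartite deviation
\[
\bigl|\Ex_{x_u,x_v}\bigl[(A_{uv}(x_u,x_v)-\alpha_{uv})\, g(x_u,x_v)\bigr]\bigr| \;\geq\; \frac{\eps\,\alpha_{uv}\,\Ex[g]}{m},
\]
where $\Ex[g]=t_{H_{i-1}}(A)$ for the subgraph $H_{i-1}$ on edges $\{e_1,\dots,e_{i-1}\}$. If $\Ex[g]$ is far from $\prod_{j<i}\alpha_{e_j}$, the inductive hypothesis applied to $H_{i-1}$ already yields one of the two conclusions on some edge and we are done; otherwise $\Ex[g]\gtrsim \prod_{j<i}\alpha_{e_j}$ and the bipartite inequality carries a genuine $(1\pm\Omega(\eps/m))$ multiplicative bias.

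The second step is the bipartite extraction, turning this weighted deviation into either \cref{itm:main_graph_cound_density_rectangle_increment} or \cref{itm:main_graph_cound_bounded_degrees_left} for the edge $(u,v)$. In the surplus direction $\Ex[A_{uv}g] \geq (1+\eps/m)\alpha_{uv}\Ex[g]$, $g$ concentrates on locations where $A_{uv}$ is biased upward, and I would apply a smooth Kelley--Lovett--Meka-style dependent random choice: sample $(x_u^0,x_v^0)$ with probability proportional to $g$, take $S=\{x:g(x,x_v^0)\geq \tau\}$ and $T=\{y:g(x_u^0,y)\geq \tau\}$ for a level $\tau$ chosen via an $L^p$ bound on $g$ with $p=\Theta_{m,\eps}(\log(2/\alpha))$, and show by averaging that with positive probability the resulting rectangle has density $\geq 2^{-O_{m,\eps}(\log^2(2/\alpha))}$ and supports an $A_{uv}$-density of at least $(1+\delta)\alpha_{uv}$. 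In the deficit direction $\Ex[A_{uv}g]\leq(1-\eps/m)\alpha_{uv}\Ex[g]$, I would first test whether the natural low-degree set $S=\{z\in \cX_u : \Ex_{\bfx}[A_{uv}(z,\bfx)]\leq(1-\delta)\alpha_{uv}\}$ already has density $\geq 2^{-O_{m,\eps}(\log(2/\alpha))}$, in which case we output \cref{itm:main_graph_cound_bounded_degrees_left} directly; otherwise most left vertices have near-average degree, and this near-regularity together with the deficit forces the mass of $g$ to concentrate on a bipartite structure that can be sifted---again in the style of \cite{KLM}---to yield a rectangle of \emph{increased} density on a \emph{different} edge whose prefix factor in $g$ captures the bias, so that the induction is fed with an edge-witness rather than a direct one.

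The main obstacle is the bipartite extraction, particularly reconciling the two conclusions in the deficit direction, since $g$ is neither a rectangle nor of rank one and we must pull out rectangle-level structure from a coupled weight. The delicate parameter tracking is to ensure that each sifting round loses at most a $2^{-O_{m,\eps}(\log(2/\alpha))}$ factor in density, so that one application matches the $2^{-O_{m,\eps}(\log(2/\alpha))}$ threshold of case~\ref{itm:main_graph_cound_bounded_degrees_left} and a chain of two applications matches the $2^{-O_{m,\eps}(\log^2(2/\alpha))}$ threshold of case~\ref{itm:main_graph_cound_density_rectangle_increment}. I expect the H\"older step with $p \approx \log(2/\alpha)$, already central in \cite{KM23,KLM}, to be exactly the right tool, and I would choose $\delta=\delta(\eps,m)$ small enough that the losses in the inductive step---the $1/m$ factor in the telescope and the $1+\eps/m$ margin in the bipartite inequality---do not compound beyond an $m$-dependent constant over the $m$-fold induction.
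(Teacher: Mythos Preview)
Your telescoping reduction to a single weighted bipartite deviation is fine in the surplus direction (indeed, since $g\in\conv(\mathcal R)$, \cref{lem:soft_rect}(i) already gives a rectangle of density $\Omega(\eps\alpha^{m+1})=2^{-O_{m,\eps}(\log(2/\alpha))}$ with increased $A_{uv}$-density). The gap is the deficit direction, and it is not a matter of parameter tracking.

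Concretely: suppose after the telescope you land on an edge $e_i=(u,v)$ with $\inp{g/\norm{g}_1}{A_{uv}/\alpha_{uv}}\le 1-\eps/m$, and suppose moreover that every proper subgraph of $H$ has the ``right'' count (so the inductive escape hatch is closed) and that the low-degree set for $A_{uv}$ is tiny. Your proposal is that ``near-regularity together with the deficit forces the mass of $g$ to concentrate on a bipartite structure that can be sifted \ldots\ to yield a rectangle of \emph{increased} density on a \emph{different} edge.'' This is not an argument. The KLM sifting converts \emph{surplus} grid-norm lower bounds into rectangles; it does not convert a deficit on $(u,v)$ into a surplus anywhere. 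And the naive route---a low-density rectangle on $(u,v)$ via \cref{lem:soft_rect}(ii), then averaging over its complement---only yields an increment of order $\alpha^{m}$, not $\Omega(1)$, because the rectangle has density $\Theta(\alpha^m)$.

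What is missing is precisely the positivity step that the paper isolates as \cref{lem:PSD} (built on \cref{prop:odd_moments}). The paper does \emph{not} reduce to a two-sided bipartite deviation; instead, via \cref{lem:MainTechnical}, it reduces the hypothesis to a \emph{one-sided} grid-norm lower bound $\norm{A_{uv}}_{U(2(d-1),p)}\ge(1+\delta)\alpha_{uv}$ (or else a large $L^p$-norm of the centered marginal, which directly yields case~\ref{itm:main_graph_cound_bounded_degrees_left} or a trivial rectangle). The mechanism is: peel off a \emph{sink} vertex $s$, apply H\"older jointly over all non-$s$ variables, decouple via Gowers--Cauchy--Schwarz into grid norms on the edges incident to $s$, and then use the PSD fact $\Ex[(J\circ J)^q]\ge 0$ (for $J$ with zero row-marginals) to upgrade $\norm{J}_{U(2,p)}\ge\eps$ to $\norm{1+J}_{U(2,p')}\ge 1+\Omega(\eps)$. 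That last step is what turns a sign-ambiguous deviation into a genuine increment; only then is KLM-style sifting (\cref{corollary:KLMbipartite}) invoked. Your edge-by-edge telescope never produces a quantity with this positivity, so the deficit case has no landing point.
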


In other words, if  $\left|t_H(A)- \alpha^m \right| \ge \eps \alpha^m$, then either $A$ contains a large subgraph that is significantly denser, or it contains many vertices whose degrees are significantly smaller than the average.

Our next theorem shows that when the goal is to identify a rectangle where the density deviates from $\alpha$, we do not need to require any assumption on the degrees of the vertices, and additionally, we can ensure a stronger lower bound on the density of the rectangle.

\begin{restatable}{thm}{countingLemmaDisc}
\label{thm:low_disc_graph_count}
Let $H=([k],E)$ be an oriented graph with $m$ edges such that $(u,v) \in E$ implies $u<v$, and let $\alpha>0$.    Let $\cX_1,\ldots,\cX_k$ be finite sets, and for every edge $(u,v) \in E$, suppose $A_{uv} \subseteq \cX_u \times \cX_v$ has density $\alpha_{uv} \ge \alpha$. 
\begin{itemize}
\item[(i)] If 
\[\Ex \prod_{(u,v) \in E} A_{uv}(\bfx_u,\bfx_v) \ge (1+\eps)^m \prod_{(u,v)\in E}\alpha_{uv},\]
then for some  $(u,v) \in E$, there exist $S\times T \subseteq \cX_u \times \cX_v$ with  $\frac{|S||T|}{|\cX_u||\cX_v|}=\Omega(\eps \alpha^{m+1})$ such that   
\[\Ex_{\substack{(\bfx,\bfy) \sim S\times T}}[A_{uv}(\bfx,\bfy)] \ge \left(1+\frac{\eps}{2}\right) \alpha_{uv}. \]
\item[(ii)] If 
\[\Ex \prod_{(u,v) \in E} A_{uv}(\bfx_u,\bfx_v) \le (1-\eps)^m \prod_{(u,v)\in E}\alpha_{uv},\]
then for some  $(u,v) \in E$, there exist $S\times T \subseteq \cX_u \times \cX_v$ with    $\frac{|S||T|}{|\cX_u||\cX_v|}=\Omega(\eps(1-\eps)^{m-1}\alpha^m)$ such that   
\[\Ex_{\substack{(\bfx,\bfy) \sim S\times T}}[A_{uv}(\bfx,\bfy)] \le \left(1-\frac{\eps}{2}\right) \alpha_{uv}. \]
\end{itemize}
\end{restatable}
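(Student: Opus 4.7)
The approach mirrors the Kelley--Lovett--Meka proof for complete bipartite $H$ and consists of two stages: an adaptive edge-peeling (hybrid) argument that reduces the multi-edge hypothesis to a single-edge weighted correlation inequality, followed by a smooth dependent-random-choice step that extracts the desired combinatorial rectangle. I describe case (i) in detail; case (ii) proceeds analogously with inequalities reversed.

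\textbf{Stage 1 (peeling to a single-edge weighted inequality).} Starting with $F = E$, maintain the invariant
\[
\Ex \prod_{e_j \in F} A_{e_j}(\bfx_{u_j}, \bfx_{v_j}) \ge (1+\eps)^{|F|} \prod_{e_j \in F} \alpha_{e_j},
\]
which holds initially by hypothesis. At each step, pick any $e^* = (u^*, v^*) \in F$ and define
\[
W^*(x, y) := \Ex_{\bfx_w : w \notin \{u^*, v^*\}} \prod_{e_j \in F \setminus \{e^*\}} A_{e_j}(\bfx_{u_j},\bfx_{v_j}) \;\in\; [0, 1],
\]
so that $\Ex_{x,y} W^*(x,y) A_{e^*}(x,y) = \Ex \prod_F A_{e_j}$. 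If $\Ex W^* A_{e^*} \ge (1+\eps) \alpha_{e^*} \Ex W^*$, halt and set $e := e^*$, $W := W^*$. Otherwise a one-line calculation transfers the invariant to $F \setminus \{e^*\}$, and we continue. The invariant cannot survive at $|F| = 1$ (it would require $\alpha_e \ge (1+\eps)\alpha_e$), so the procedure halts at some $|F| \ge 2$. At halt we have $\Ex WA_e \ge (1+\eps)\alpha_e w$ with $w := \Ex W$, and moreover
\[
w = \Ex \prod_{e_j \in F \setminus \{e\}} A_{e_j} \;\ge\; \Ex \prod_{e_j \in F} A_{e_j} \;\ge\; \prod_{e_j \in F} \alpha_{e_j} \;\ge\; \alpha^{m}.
\]

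\textbf{Stage 2 (weighted inequality to a rectangle).} Rewriting the single-edge inequality as $\Ex W(A_e - (1+\eps/2)\alpha_e) \ge (\eps/2)\alpha_e w$ and applying the layer-cake decomposition $W = \int_0^1 \mathbf{1}_{W \ge \tau}\, d\tau$, a standard averaging yields a threshold $\tau^*$ such that the super-level set $R_{\tau^*} = \{(x,y) : W(x,y) \ge \tau^*\}$ has relative size $\gtrsim \eps \alpha_e w$ and carries $A_e$ at density at least $(1+\eps/2)\alpha_e$. The remaining task is to replace $R_{\tau^*}$ by a genuine combinatorial rectangle $S \times T$ while preserving both the size bound and the density increment. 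This is achieved by a smooth dependent-random-choice step on the marginals $W_1(x) = \Ex_\bfy W(x,\bfy)$ and $W_2(y) = \Ex_\bfx W(\bfx,y)$: sampling a joint threshold $\tau'$ and setting $S := \{x : W_1(x) \ge \tau'\}$, $T := \{y : W_2(y) \ge \tau'\}$, then averaging over $\tau'$, produces a rectangle with $|S||T|/(|\cX_u||\cX_v|) = \Omega(\eps \alpha_e w) = \Omega(\eps \alpha^{m+1})$ and $\Ex_{S \times T} A_e \ge (1+\eps/2)\alpha_e$.

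\textbf{Main obstacle.} The delicate point is Stage 2. The weight $W$ can have a highly skewed distribution, potentially concentrated near a ``near-diagonal'' subset (as in \Cref{example:trianglesparse}), so a naive layer-cake argument yields only a super-level set, not a rectangle; extracting a sufficiently dense rectangle without losing quantitative constants in $\alpha$ is exactly what the smooth dependent-random-choice analysis is designed to handle, and is where the bulk of the proof's technical work sits. Case (ii) follows the same template with $(1+\eps)$ replaced by $(1-\eps)$ throughout; however, since the invariant is now an upper bound on $\Ex \prod_F A_{e_j}$, one cannot directly lower-bound $w$ at termination, and the extra factor $(1-\eps)^{m-1}$ in the conclusion reflects the cost of tracking the ratio $\Ex\prod_F A_{e_j}/\prod_F \alpha_{e_j}$ through the iteration.
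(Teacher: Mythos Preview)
Your Stage~1 is essentially the paper's induction and is fine: it produces an edge $e=(u^*,v^*)$, a weight $W\colon \cX_{u^*}\times\cX_{v^*}\to[0,1]$ with $\|W\|_1 \ge \alpha^m$, and the inequality $\langle W, A_e\rangle \ge (1+\eps)\alpha_e\|W\|_1$.

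The gap is Stage~2. You treat $W$ as an arbitrary $[0,1]$-valued function and propose to extract a rectangle by thresholding the marginals $W_1(x)=\Ex_\bfy W(x,\bfy)$ and $W_2(y)=\Ex_\bfx W(\bfx,y)$. This cannot work in general: take $\cX_{u^*}=\cX_{v^*}=\Z_N$ and $W(x,y)=\mathbf{1}[x+y\in C]$ for any set $C$; both marginals are identically $|C|/N$, so every threshold yields $S\times T$ equal to either $\emptyset$ or the whole space, and you learn nothing about where $A_e$ is dense. The ``smooth dependent random choice'' you allude to is not the marginal-thresholding construction you actually wrote down, and you have not said what it is or why it would preserve the $A_e$-density increment.

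What you are missing is a structural fact about $W$ that makes Stage~2 a one-line averaging argument rather than the ``bulk of the technical work.'' After removing $e^*$, no remaining edge in $F$ has both endpoints in $\{u^*,v^*\}$ (since $H$ is a simple oriented graph), so for each fixed assignment of the variables $\{x_w:w\notin\{u^*,v^*\}\}$ the product $\prod_{F\setminus\{e^*\}} A_{e_j}$ factors as $f(x_{u^*})\, g(x_{v^*})$ with $f,g\in\{0,1\}$. Hence your weight $W$ is already a convex combination of indicators of rectangles. The paper records this as $W\in\conv(\mathcal R)$ and then applies \cref{lem:soft_rect}: discard from the convex combination all rectangles of density below $\eps\|W\|_1/(4\|A_e/\alpha_e\|_\infty)$ (this perturbs the inner product by at most $\eps/4$), and pick any surviving rectangle. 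No layer-cake, no dependent random choice.
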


This implies the following immediate corollary for bipartite graphs which we use in the proof of \Cref{thm:main_graph_count}.
\begin{corollary}\label{corollary:KLMbipartite}
    Let $H=(L,R,E)$ be a bipartite graph with $m$ edges with left vertices $L=[k]$ and right vertices $R = [p]$, and let $0<\eps<1$. For every bipartite graph $A\colon \cX\times \cY\to \{0,1\}$  and density $\alpha$, the following holds.
Suppose 
 \[\Ex_{\substack{\bfx_1,\ldots,\bfx_k \in \cX \\\bfy_1,\ldots,\bfy_p \in \cY }}  \left[\prod_{\{i,j\} \in E} {
A}(\bfx_i,\bfy_j)\right]\geq (1+\eps)^m\alpha^m.\]
Then there exist $S \subseteq \cX$ and $T\subseteq \cY$ with  $\frac{|S||T|}{|\cX||\cY|}=\Omega(\eps\alpha^{m+1})$ such that   
\[\Ex_{\substack{(\bfx,\bfy) \sim S\times T}}[A(\bfx,\bfy)] \ge \left(1+\frac{\eps}{2}\right) \alpha. \]
\end{corollary}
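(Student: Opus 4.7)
The plan is to deduce the corollary directly from Theorem~\ref{thm:low_disc_graph_count}~(i) by realizing the bipartite setting as a special case of the oriented-graph framework. First, I would reorient the bipartite graph $H=(L,R,E)$ as an acyclic digraph: relabel so that $L=[k]$ occupies indices $1,\dots,k$ and $R$ occupies indices $k+1,\dots,k+p$, and orient every edge from its $L$-endpoint to its $R$-endpoint. This yields an oriented graph $\widetilde{H}$ on $[k+p]$ with $m$ edges, each pointing from a smaller to a larger index, so the hypothesis ``$(u,v) \in E \Rightarrow u<v$'' of Theorem~\ref{thm:low_disc_graph_count} is satisfied.

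Next, I would specialize the ground sets and bipartite graphs appearing in Theorem~\ref{thm:low_disc_graph_count}: set $\cX_u = \cX$ for $u \in \{1,\dots,k\}$ and $\cX_v = \cY$ for $v \in \{k+1,\dots,k+p\}$, and, for every oriented edge $(u,v)$ of $\widetilde{H}$, define $A_{uv}=A \subseteq \cX_u \times \cX_v = \cX \times \cY$, so that each $A_{uv}$ has density $\alpha_{uv}=\alpha$. With these identifications, the count $\Ex \prod_{(u,v) \in E(\widetilde{H})} A_{uv}(\bfx_u,\bfx_v)$ coincides term-by-term with the expectation $\Ex\prod_{\{i,j\}\in E}A(\bfx_i,\bfy_j)$ appearing in the hypothesis of the corollary, and $\prod_{(u,v)}\alpha_{uv}=\alpha^m$. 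Thus the assumption $t_H(A)\ge (1+\eps)^m\alpha^m$ translates verbatim into the hypothesis of Theorem~\ref{thm:low_disc_graph_count}~(i).

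Applying that theorem would then produce an oriented edge $(u,v) \in E(\widetilde{H})$ and subsets $S \subseteq \cX_u$, $T \subseteq \cX_v$ with $|S||T|/(|\cX_u||\cX_v|) = \Omega(\eps\alpha^{m+1})$ such that $\Ex_{(\bfx,\bfy)\sim S\times T}[A_{uv}(\bfx,\bfy)] \ge (1+\eps/2)\alpha_{uv}$. Since every edge of $\widetilde H$ has tail in $L$ and head in $R$, we automatically have $u \in L$ and $v \in R$, so $\cX_u=\cX$, $\cX_v=\cY$, $A_{uv}=A$, and $\alpha_{uv}=\alpha$, which matches the desired conclusion. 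I do not expect any genuine obstacle here: the entire content of the corollary is the observation that counting $H$-copies with a single function $A$ on one bipartite host embeds, via this relabeling and identification, into the asymmetric multi-function oriented-graph setting for which Theorem~\ref{thm:low_disc_graph_count} is stated.
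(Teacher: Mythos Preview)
Your proposal is correct and matches the paper's approach: the paper states this result as an ``immediate corollary'' of \cref{thm:low_disc_graph_count}, and your relabeling of the bipartite graph as an oriented graph with all edges pointing from $L$ to $R$, together with the specialization $A_{uv}=A$ and $\cX_u=\cX$, $\cX_v=\cY$, is exactly the intended reduction.
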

As mentioned in the introduction,  the case where $H$  is a complete bipartite graph was proven in~\cite{KLM}.

\subsection{Proof of \texorpdfstring{\cref{thm:low_disc_graph_count}}{Theorem \ref{thm:low_disc_graph_count}}}
 
To prove \Cref{thm:low_disc_graph_count}, we need the following lemma, which is a generalization of \cite[Claim 4.6]{KLM}. 
Let $\cX,\cY$ be finite sets, define 
\[ \mathcal{R} \defeq \{S \times T : S \subseteq \cX,T \subseteq \cY\}, \]
and let $\conv(\mathcal{R})$ denote the convex hull of $\mathcal{R}$.
\begin{lemma}
\label{lem:soft_rect}
Let $\Delta \ge 1$  and $\gamma, \eps  \in (0,1]$.  Let $D\colon \cX \times \cY \to \mathbb{R}_{\ge 0}$ and $F \in \conv(\mathcal{R})$, and suppose $\norm{D}_\infty \le \Delta$ and $\norm{F}_1 \ge \gamma$.
\begin{itemize}
\item[(i)]  If $\inp{\frac{F}{\norm{F}_1}}{D} \ge 1+\eps$,
there exists a rectangle $S \times T$ with $\frac{|S||T|}{|\cX||\cY|} \ge \frac{\eps \gamma}{4 \Delta}$ and 
\[\Ex_{(x,y) \in S \times T} D(x,y) \ge 1+ \frac{\eps}{2}.  \]
\item[(ii)] If $\inp{\frac{F}{\norm{F}_1}}{D} \le 1-\eps$,
there exists a rectangle $S \times T$ with $\frac{|S||T|}{|\cX||\cY|} \ge \frac{\eps \gamma}{4 \Delta}$ and 
\[\Ex_{(x,y) \in S \times T} D(x,y) \le 1- \frac{\eps}{2}.  \]
\end{itemize}
\end{lemma}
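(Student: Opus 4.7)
\medskip

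\noindent
The plan is to prove both parts by a weighted averaging argument over the rectangles used to express $F$. Since $F \in \conv(\mathcal{R})$, we write
\[ F = \sum_i \lambda_i \mathbf{1}_{S_i \times T_i}, \qquad \lambda_i \ge 0, \ \sum_i \lambda_i = 1, \]
where each $S_i \times T_i$ is a rectangle in $\cX \times \cY$. Let $\mu_i \defeq \frac{|S_i||T_i|}{|\cX||\cY|}$ and, when $\mu_i > 0$, let $a_i \defeq \Ex_{(x,y)\in S_i \times T_i} D(x,y) = \inp{\mathbf{1}_{S_i \times T_i}}{D}/\mu_i$. Then $\|F\|_1 = \sum_i \lambda_i \mu_i$ and $\inp{F}{D} = \sum_i \lambda_i \mu_i a_i$. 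The goal in case (i) becomes exhibiting an index $i$ with $a_i \ge 1 + \eps/2$ and $\mu_i \ge \frac{\eps\gamma}{4\Delta}$, and similarly for (ii).

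\medskip

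\noindent
The key step is to discard the rectangles that are too small. Partition the indices into $B \defeq \{i : \mu_i < \frac{\eps\gamma}{4\Delta}\}$ and $G$, its complement. Using $\sum_i \lambda_i = 1$, one gets $\sum_{i \in B} \lambda_i \mu_i < \frac{\eps\gamma}{4\Delta} \le \frac{\eps\gamma}{4}$. For case~(i), since $a_i \le \|D\|_\infty \le \Delta$, the contribution of the bad rectangles is at most $\Delta \cdot \frac{\eps\gamma}{4\Delta} = \frac{\eps\gamma}{4}$, so
\[ \sum_{i \in G} \lambda_i \mu_i a_i \ \ge\ (1+\eps)\|F\|_1 - \frac{\eps\gamma}{4}. \]
Dividing by $\sum_{i\in G}\lambda_i\mu_i \le \|F\|_1$ yields a weighted average of $a_i$'s over $G$ at least $1 + \eps - \frac{\eps\gamma}{4\|F\|_1} \ge 1 + \eps - \eps/4 \ge 1 + \eps/2$, where we used $\|F\|_1 \ge \gamma$. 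Hence some $i \in G$ achieves $a_i \ge 1 + \eps/2$, and by construction $\mu_i \ge \frac{\eps\gamma}{4\Delta}$, giving the rectangle $S = S_i$, $T = T_i$.

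\medskip

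\noindent
For case~(ii), the same partition works but using the nonnegativity of $D$ in place of the $\Delta$-bound. Since $a_i \ge 0$, dropping $B$ can only decrease the sum, so $\sum_{i\in G}\lambda_i\mu_i a_i \le \inp{F}{D} \le (1-\eps)\|F\|_1$. For the denominator, $\sum_{i \in G}\lambda_i\mu_i \ge \|F\|_1 - \frac{\eps\gamma}{4}$. A short calculation, again using $\|F\|_1 \ge \gamma$, shows
\[ \frac{(1-\eps)\|F\|_1}{\|F\|_1 - \eps\gamma/4} \ \le\ 1 - \eps/2, \]
so the weighted average of $a_i$'s over $G$ is at most $1 - \eps/2$, producing an index $i \in G$ with $a_i \le 1 - \eps/2$ and $\mu_i \ge \frac{\eps\gamma}{4\Delta}$.

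\medskip

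\noindent
There is no real obstacle; the argument is a clean averaging/pruning scheme. The only point that needs a touch of care is the case~(ii) inequality, because there we cannot symmetrically exploit the upper bound $\|D\|_\infty \le \Delta$ — we must instead rely on $D \ge 0$ to discard $B$, and then verify that the additive loss $\eps\gamma/4$ in the denominator is small relative to $\|F\|_1 \ge \gamma$. Once this is checked, both inequalities fall out of elementary manipulations.
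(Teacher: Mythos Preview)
Your proof is correct and follows essentially the same pruning-then-averaging scheme as the paper: write $F$ as a convex combination of rectangles, discard those with $\|R_i\|_1 < \tfrac{\eps\gamma}{4\Delta}$, and then average over the survivors. The only cosmetic difference is that in part~(ii) you use the nonnegativity of $D$ to drop the small rectangles from the numerator, whereas the paper instead bounds $|\langle F-F',D\rangle|$ via $\|D\|_\infty \le \Delta$; both routes lead to the same conclusion.
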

\begin{proof}
Let $F= \sum_i c_i R_i$ for rectangles $R_i \in \mathcal{R}$ and $c_i \ge 0$ with $\sum_i c_i=1$. Let $\tau=\frac{\eps \gamma}{4\Delta}$, and set $F \defeq \sum_i c'_i R_i$ with 
\[
c_i'
=
\begin{cases}
c_i & \text{if } \norm{R_i}_1 \ge \tau, \\
0 & \text{if } \norm{R_i}_1 < \tau. \\
\end{cases}
\]
Note $\norm{F-F'}_1 \le \tau$.

\medskip

\noindent\textbf{Part (i):} In this case, we have
\[\frac{\inp{F'}{D}}{\norm{F'}_1} \ge \frac{\inp{F'}{D}}{\norm{F}_1} =   \frac{\inp{F}{D}}{\norm{F}_1}+ \frac{\inp{F'-F}{D}}{\norm{F}_1} \ge 
1+\eps - \frac{\norm{F'-F}_1\norm{D}_\infty}{\norm{F}_1} \ge 1+\eps - \frac{\tau \Delta}{\gamma} \ge 1 +\frac{3\eps}{4}.
\] 
The definition of $F'$ implies there is a choice of $R=R_i$ with  $\norm{R}_1 \ge \tau$ and  $\inp{\frac{R}{\norm{R}_1}}{D} \ge 1+\frac{3\eps}{4}$.

\medskip

\noindent\textbf{Part (ii):} We have   
\[\frac{\inp{F'}{D}}{\norm{F}_1} =   \frac{\inp{F}{D}}{\norm{F}_1}+ \frac{\inp{F'-F}{D}}{\norm{F}_1} \le 
1-\eps + \frac{\norm{F'-F}_1\norm{D}_\infty}{\norm{F}_1} \le 1-\eps + \frac{\tau \Delta}{\gamma} \le 1 -\frac{3\eps}{4}.
\] 
Furthermore, 
\[\norm{F'}_1 \ge \norm{F}_1  -\tau \ge \norm{F}_1 - \frac{\eps \norm{F}_1}{4} = \left(1-\frac{\eps}{4}\right) \norm{F}_1.  \]
We get 
\[\frac{\inp{F'}{D}}{\norm{F'}_1} \le \frac{1 -(3\eps/4)}{1-(\eps/4)} \le 1-\frac{\eps}{2}. \]
The definition of $F'$ implies there is a choice of $R=R_i$ with  $\norm{R}_1 \ge \tau$ and  $\inp{\frac{R}{\norm{R}_1}}{D} \le 1-\frac{\eps}{2}$.
\end{proof}

We can now prove \Cref{thm:low_disc_graph_count}.
\begin{proof}[Proof of \Cref{thm:low_disc_graph_count}.]
We will prove the theorem by induction on $m$. The base case $m=1$ is trivial.  

Pick any edge $(u_0,v_0) \in E$ and let $E'  \defeq E - \set{(u_0,v_0)}$ and $H' \defeq (V,E')$.  Let $V' = V \setminus \set{u_0,v_0}$ and let $E''$ be the edges with both endpoints in $V'$. For $(x,y) \in \cX_{u_0} \times \cX_{v_0}$, define 
\[F(x,y) \defeq  \Ex_{\bfx_w \in \cX_w  \ \forall w \in V'} \prod_{(u,v) \in E''} A_{uv}(\bfx_u,\bfx_v) \prod_{(u_0,v) \in E'} A_{u_0v}(x,\bfx_v) \prod_{(u,v_0) \in E'} A_{uv_0}(\bfx_u,y).   \]
Note that $F$ is an average of rectangles: each of the factors may depend on $x$ or $y$ but not both. Therefore, $F \in \conv(\mathcal{R})$. 

\medskip

\noindent\textbf{Part (i):} By applying the induction hypothesis to $H'$, we may assume
\[ \norm{F}_1=\Ex \prod_{(u,v) \in E'} A_{uv}(\bfx_u,\bfx_v) \le (1+\eps)^{m-1} \prod_{(u,v) \in E'} \alpha_{uv},
\]
as otherwise, we are done. Since 
\[\inp{F}{A_{u_0v_0}} = \Ex \prod_{(u,v) \in E} A_{uv}(\bfx_u,\bfx_v) \ge (1+\eps)^m \prod_{uv \in E}\alpha_{uv},\]
by setting $D=\frac{A_{u_0v_0}}{\norm{A_{u_0v_0}}_1}$, we have
\[\inp{\frac{F}{\norm{F}_1}}{D} \ge \frac{(1+\eps)^m \prod_{uv \in E}\alpha_{uv}}{\alpha_{u_0v_0} \norm{F}_1} \ge   1+\eps.\]
Since $\norm{D}_\infty \le \frac{1}{\norm{A_{u_0v_0}}_1} \le \frac{1}{\alpha}$ and by our assumption $\norm{F}_1 \ge  \Ex \prod_{(u,v) \in E} A_{uv}(\bfx_u,\bfx_v) \ge \alpha^m$, \Cref{lem:soft_rect} provides a rectangle $S \times T \subseteq \cX_{u_0} \times \cX_{v_0}$ with $\frac{|S||T|}{|\cX_{u_0}||\cX_{v_0}|} = \Omega(\eps \alpha^{m+1})$ and 
\[\Ex_{(\bfx,\bfy) \in S \times T} A_{u_0v_0}(\bfx,\bfy) \ge (1+ \frac{\eps}{2})\alpha_{u_0v_0}.\]

\medskip

\noindent\textbf{Part (ii):} By applying the induction hypothesis to $H'$, we may assume
\[ \norm{F}_1=\Ex \prod_{(u,v) \in E'} A_{uv}(\bfx_u,\bfx_v) \ge (1-\eps)^{m-1} \prod_{(u,v) \in E'} \alpha_{uv},
\]
as otherwise, we are done. In this case, we have
\[\inp{\frac{F}{\norm{F}_1}}{D}\le \frac{(1-\eps)^m\prod_{(u,v) \in E} \alpha_{uv}}{(1-\eps)^{m-1}\prod_{(u,v) \in E} \alpha_{uv}} \le 1-\eps.\]
Since $\norm{D}_\infty \le \frac{1}{\alpha}$ and $\norm{F}_1  \ge  (1-\eps)^{m-1} \alpha^{m-1}$, \Cref{lem:soft_rect} provides a rectangle $S \times T$ with $\frac{|S||T|}{|\cX_{u_0}||\cX_{v_0}|} = \Omega(\eps(1-\eps)^{m-1} \alpha^{m})$ and 
$\Ex_{(\bfx,\bfy) \in S \times T} A_{u_0v_0}(\bfx,\bfy) \le (1- \frac{\eps}{2})\alpha_{u_0v_0}$.
\end{proof}

\subsection{Proof of \texorpdfstring{\cref{thm:main_graph_count}}{Theorem \ref{thm:main_graph_count}}}

The proof of \cref{thm:main_graph_count} is more involved and requires some preparation.

\paragraph{Grid norms.}  One of the striking results in the work of Chung, Graham and Wilson~\cite{CGW89} states that if an undirected graph with edge density $\alpha$ has approximately the same number of $4$-cycles as the Erd\H{o}s--R\'enyi random graph $G(n,\alpha)$, then it shares several structural, spectral and statistical properties with $G(n,\alpha)$. 

The pseudo-randomness theorem of~\cite{CGW89} naturally generalizes to binary relations $A \subseteq \cX \times \cY$. A set $A \subseteq \cX \times \cY$ exhibits any of  several pseudo-random properties if 
\(t^b_{K_{2,2}}(A)  =\alpha^4 + o(1)\)
where 
\[t^b_{K_{k,p}}(A) \defeq    \Ex_{\substack{\bfx \sim \cX^k \\ \bfy \sim \cY^p}} \left[ \prod_{i=1}^{k} \prod_{j=1}^{p} A(\bfx_i,\bfy_j) \right].\]

One of the key ideas used in the work of Kelley and Meka, and more explicitly in the follow-up work of Kelley, Lovett, and Meka~\cite{KLM}, is to consider a certain generalization of $t^b_{K_{2,2}}(A)$.

\begin{definition}[Grid semi-norms] 
For a function \(f\colon \cX \times \cY \rightarrow \mathbb{R}\) and \(k, p \in \mathbb{N}\), let
\begin{align*}
U_{k,p}(f) &\defeq    \Ex_{\substack{\bfx \sim \cX^k \\ \bfy \sim \cY^p}} \left[ \prod_{i=1}^{k} \prod_{j=1}^{p} f(\bfx_i,\bfy_j) \right].
\end{align*}
Also define \(\|f\|_{U(k,p)} \defeq |U_{k,p}(f)|^{1/p k}\) and \(\|f\|_{|U(k,p)|} \defeq U_{k,p}(|f|)^{1/p k}\).
\end{definition}
\begin{remark}\label{remark:equivbipartite}
Note that if $A:\cX\times \cY\to \{0,1\}$, then  
\(t^b_{K_{k,p}}(A) = U_{k,p}(A)\).
\end{remark}

\cref{lem:decoupling} below is the analogue of the so-called Gowers--Cauchy--Schwarz inequality for $U(k,p)$ norms. 

\begin{lemma}\label{lem:decoupling}(Gowers--Cauchy--Schwarz inequality)
Let $p,k \in \N$, and for $i \in [k]$ and $j \in [p]$, let $f_{ij}\colon \cX_i \times \cY_j \to \R$. Suppose that either $k,p$ are both even numbers, or the functions $f_{ij}$ take only non-negative values.
Then  
\begin{equation}
\label{eq:Gowers-CS}
\Ex_{\bfx_1\sim \cX_1,\ldots,\bfx_k\sim \cX_k} \Ex_{\bfy_1\sim \cY_1,\ldots,\bfy_p\sim\cY_p} \left[ \prod_{i=1}^k 
\prod_{j=1}^p f_{ij}(\bfx_i,\bfy_j) \right] \le \prod_{i=1}^k \prod_{j=1}^p \norm{f_{ij}}_{U(k,p)}. 
\end{equation}

\end{lemma}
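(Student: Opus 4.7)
I would prove this as a two-dimensional version of the classical Gowers--Cauchy--Schwarz argument via two applications of Hölder's inequality: one to decouple the $\vec{\bfx}$-variables and one to decouple the $\vec{\bfy}$-variables.

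First, fix $\vec{\bfy}$ and exploit the independence of $\bfx_1,\ldots,\bfx_k$ to factor the inner expectation: setting $h_i(\vec{\bfy}) \defeq \Ex_{\bfx_i}\prod_{j} f_{ij}(\bfx_i,\bfy_j)$, we may write
\begin{equation*}
\Lambda \defeq \Ex \prod_{i,j} f_{ij}(\bfx_i,\bfy_j) = \Ex_{\vec{\bfy}} \prod_{i=1}^k h_i(\vec{\bfy}).
\end{equation*}
Apply Hölder's inequality to the $k$-fold product over $i$ (all exponents equal to $k$) to obtain $|\Lambda| \le \prod_{i=1}^k (\Ex_{\vec{\bfy}} h_i(\vec{\bfy})^k)^{1/k}$. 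This step requires $h_i^k = |h_i|^k$, which is valid under either hypothesis (all $f_{ij}$, hence $h_i$, are nonnegative, or $k$ is even).

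Next, expand $h_i(\vec{\bfy})^k$ by introducing $k$ i.i.d.\ copies $\bfx_i^{(1)},\ldots,\bfx_i^{(k)}$ of $\bfx_i$ and push the $\vec{\bfy}$-expectation inside; by independence of $\bfy_1,\ldots,\bfy_p$ it factors as
\begin{equation*}
\Ex_{\vec{\bfy}} h_i(\vec{\bfy})^k = \Ex_{\bfx_i^{(1)},\ldots,\bfx_i^{(k)}} \prod_{j=1}^p u_{ij}, \qquad u_{ij} \defeq \Ex_{\bfy_j}\prod_{\ell=1}^k f_{ij}(\bfx_i^{(\ell)},\bfy_j).
\end{equation*}
A second application of Hölder to the $p$-fold product over $j$ yields $\Ex \prod_j u_{ij} \le \prod_j (\Ex u_{ij}^p)^{1/p}$, and expanding $u_{ij}^p$ with $p$ i.i.d.\ copies $\bfy_j^{(1)},\ldots,\bfy_j^{(p)}$ recovers $U_{k,p}(f_{ij})$ exactly. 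Chaining the two Hölder bounds gives $|\Lambda| \le \prod_{i,j} U_{k,p}(f_{ij})^{1/(pk)}$.

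The only subtle point—and the main, though minor, obstacle—is sign bookkeeping: we need $u_{ij}^p = |u_{ij}|^p$ at the second Hölder step, and $U_{k,p}(f_{ij}) \ge 0$ so that $U_{k,p}(f_{ij})^{1/(pk)} = \norm{f_{ij}}_{U(k,p)}$. Both are guaranteed under the stated hypotheses: rewriting $U_{k,p}(f) = \Ex_{\vec{\bfx}}\bigl(\Ex_{\bfy}\prod_\ell f(\bfx_\ell,\bfy)\bigr)^p$ shows $U_{k,p}(f) \ge 0$ whenever $p$ is even, and a symmetric rewrite handles the case of even $k$; in the nonnegative case everything is sign-consistent for free. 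The substance of the inequality is thus carried by two clean applications of Hölder, with the parity/positivity hypotheses inserted precisely to keep all intermediate $k$-th and $p$-th powers nonnegative.
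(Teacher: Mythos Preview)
Your proposal is correct and is essentially the paper's own proof: two applications of the generalized H\"older inequality, one to decouple the row index and one to decouple the column index, with the parity/positivity hypothesis invoked exactly where you say. The only cosmetic difference is the order---the paper first H\"olders over $\vec{\bfx}$ to split the product over $j$ and then over the replicated $\bfy$'s to split over $i$, whereas you do the symmetric thing---which is immaterial.
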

\begin{proof}
Recall the generalized H\"older inequality:
\[
 \Ex_\bfz \left| \prod_{j=1}^p g_j(\bfz) \right| \leq
 \prod_{j=1}^p \left(\Ex_\bfz |g_j(\bfz)|^p\right)^{1/p}.
\]
Choosing $z = (x_1,\dots,x_k)$ and $g_j(x_1,\dots,x_k) = \Ex_{\bfy_j\sim \cY_j} \prod_i f_{ij}(x_i,\bfy_j)$, we obtain  
\begin{equation}
\label{eq:Gowers-CS-proof}
\text{L.H.S.\ of \eqref{eq:Gowers-CS}} \le 
\prod_{j=1}^p \left(\Ex_{\bfx_1\sim \cX_1,\ldots,\bfx_k\sim \cX_k} \left(\Ex_{\bfy_j \sim \cY_j}\prod_{i=1}^k  f_{ij}(\bfx_i,\bfy_j) \right)^p\right)^{1/p}.
\end{equation}
Next, note that for every fixed $j$, we can apply the generalized H\"older inequality to the variable $\bfy \sim \cY_j^p$ and obtain 
\begin{align*}
\Ex_{\bfx_1\sim \cX_1,\ldots,\bfx_k\sim \cX_k} \left(\Ex_{\bfy \sim \cY_j}\prod_{i=1}^k  f_{ij}(\bfx_i,\bfy) \right)^p &= 
\Ex_{\bfy \sim \cY_j^p} \Ex_{\bfx_1\sim \cX_1,\ldots,\bfx_k\sim \cX_k} 
 \prod_{i=1}^k \prod_{t=1}^p f_{ij}(\bfx_i,\bfy_t) \\ 
 &= 
\Ex_{\bfy \sim \cY_j^p} 
\prod_{i=1}^k
\left(\Ex_{\bfx_i\sim \cX_i} 
  \prod_{t=1}^p f_{ij}(\bfx_i,\bfy_t)
  \right)
  \\ 
&\le \prod_{i=1}^k \left(\Ex_{\bfy \sim \cY_j^p} \left(\Ex_{\bfx \sim \cX_i}  \prod_{t=1}^p f_{ij}(\bfx,\bfy_t) \right)^k\right)^{1/k} \\
&= \prod_{i=1}^k \norm{f_{ij}}_{U(k,p)}^p.
\end{align*}
Substituting these in \eqref{eq:Gowers-CS-proof}, we obtain the desired inequality.  \end{proof}

Applying \cref{lem:decoupling} to the expansion of $U_{k,p}(f+g)$ easily implies that $\norm{\cdot}_{|U(k,p)|}$ is a norm, and that when $p$ and $k$ are both even, $\norm{\cdot}_{U(k,p)}$ is a semi-norm.

\begin{corollary}[\protect{\cite[Claim 4.2]{KLM}}]\label{cor:monotonicity_of_grid_semi_norm}
    If $f\colon \cX\times\cY \to \R_{\geq 0}$ then $\norm{f}_{U(k,p)}$ is monotone in both $k$ and $p$.
\end{corollary}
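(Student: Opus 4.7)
The plan is to express $U_{k,p}(f)$ as a genuine $k$-th moment of a non-negative random variable, after which monotonicity in $k$ reduces to the standard nesting of $L^k$ norms on a probability space. First, by independence of $\bfx_1,\ldots,\bfx_k$, the product over $i$ factorizes inside the expectation:
\[
U_{k,p}(f) \;=\; \Ex_{\bfy_1,\ldots,\bfy_p}\!\left[\prod_{i=1}^k \Ex_{\bfx_i}\prod_{j=1}^p f(\bfx_i,\bfy_j)\right] \;=\; \Ex_{\bfy_1,\ldots,\bfy_p}\!\left[F(\bfy_1,\ldots,\bfy_p)^k\right],
\]
where $F(\bfy_1,\ldots,\bfy_p) \defeq \Ex_{\bfx\sim\cX}\prod_{j=1}^p f(\bfx,\bfy_j)$. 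Because $f\ge 0$, the random variable $F$ is non-negative on the probability space $\cY^p$.

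Next, the $L^k$ norms of a non-negative random variable are non-decreasing in $k$ (a one-line consequence of Jensen's inequality applied to the convex map $t\mapsto t^{(k+1)/k}$ on $t\ge 0$), so $\Ex[F^k]^{1/k}\le \Ex[F^{k+1}]^{1/(k+1)}$. Raising both sides to the $1/p$ power yields
\[
\norm{f}_{U(k,p)} \;=\; U_{k,p}(f)^{1/(kp)} \;\le\; U_{k+1,p}(f)^{1/((k+1)p)} \;=\; \norm{f}_{U(k+1,p)},
\]
which is monotonicity in $k$.

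Finally, monotonicity in $p$ follows by the same argument after swapping the roles of $\cX$ and $\cY$: the defining sum $U_{k,p}(f)$ is manifestly invariant under the simultaneous swap $(k,\cX)\leftrightarrow(p,\cY)$ together with passing to the transpose $f^T(y,x)\defeq f(x,y)$, and the exponent $1/(kp)$ in the norm is symmetric in $k$ and $p$, so $\norm{f}_{U(k,p)}=\norm{f^T}_{U(p,k)}$. Applying the already-proved monotonicity in the first index to $f^T$ therefore gives $\norm{f}_{U(k,p)}\le \norm{f}_{U(k,p+1)}$. There is no real obstacle: the only thing one must notice is that the $\bfx_i$'s are i.i.d., which turns the outer product over $i$ into a pure $k$-th power and lets the standard moment-monotonicity close the argument; non-negativity of $f$ is essential only in order for $F$ itself to be non-negative, so that Jensen applies.
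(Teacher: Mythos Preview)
Your proof is correct, but it follows a different route from the paper. The paper deduces the corollary from its Gowers--Cauchy--Schwarz inequality (\cref{lem:decoupling}) by padding: for $k'\ge k$ it sets $f_{ij}=f$ for $i\le k$ and $f_{ij}\equiv 1$ for $k<i\le k'$, and then \eqref{eq:Gowers-CS} applied to these functions immediately yields $\norm{f}_{U(k,p)}^{kp}\le \norm{f}_{U(k',p)}^{kp}\cdot\norm{1}_{U(k',p)}^{(k'-k)p}=\norm{f}_{U(k',p)}^{kp}$. Your argument instead exploits the i.i.d.\ structure to collapse $U_{k,p}(f)$ into a genuine $k$-th moment $\Ex_{\bfy}[F(\bfy)^k]$ and then invokes the classical nesting of $L^k$ norms via Jensen. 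Your approach is more self-contained (it does not rely on \cref{lem:decoupling}) and perhaps conceptually cleaner for this specific statement; the paper's approach has the virtue of being a one-line specialization of a lemma it has already established and will use repeatedly.
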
 
\begin{proof}
   We will prove monotonicity for $k$; the same argument can be applied to $p$. 
   Let $k'  \ge  k$, and define $f_{ij}\colon \cX \times \cY \to \R_{\geq 0}$ for $1 \le i \le k'$ and $1 \le j \le p$ as 
   \[f_{ij}
   =\begin{cases}
   f &   i \le  k,  \\
   1 & i > k.
   \end{cases}
   \]
   Applying \eqref{eq:Gowers-CS} to these functions, we have   
    \[\norm{f}_{U(k,p)}^{kp}= \Ex_{\bfx_1,\ldots,\bfx_{k'}} \Ex_{\bfy_1,\ldots,\bfy_p} \left[ \prod_{i=1}^{k'} 
\prod_{j=1}^p f(\bfx_i,\bfy_j) \right] \le  \norm{f}_{U(k',p)}^{kp} \norm{1}_{U(k',p)}^{k'p-kp}= \norm{f}_{U(k',p)}^{kp}, \]
as desired. 
\end{proof}

We also need the following simple lemma.

\begin{lemma}\label{lem:largeCenteredNormToHP}
Let $p \ge 1$, and let $f\colon \cX \to [0,1]$ satisfy $\Ex[f] = \alpha$ and $\norm{f-\alpha}_{p} \ge \eps \alpha$. Then one of the following two cases holds:
\begin{itemize}
    \item [(i)]   $\Ex_\bfx \left[ \1_{\left[f(\bfx)>\alpha\left(1+\frac{\eps}{4}\right)\right]}\right] \geq \eps^p\alpha^p/4 $.
    \item[(ii)] $\Ex_\bfx \left[ \1_{\left[f(\bfx)<\alpha\left(1-\frac{\eps}{4}\right)\right]}\right]\geq \eps^p/4$.
\end{itemize}

\end{lemma}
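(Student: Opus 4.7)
The plan is to prove this by a direct case analysis on where the mass of $f - \alpha$ concentrates. Writing out the $L^p$ hypothesis, we have $\Ex_{\bfx}[|f(\bfx)-\alpha|^p] \ge \eps^p \alpha^p$. Partition $\cX$ into three pieces according to the deviation of $f$ from $\alpha$:
\[
A = \{x : f(x) > \alpha(1+\eps/4)\}, \quad B = \{x : f(x) < \alpha(1-\eps/4)\}, \quad C = \cX \setminus (A \cup B),
\]
and let $a = \Pr[\bfx \in A]$, $b = \Pr[\bfx \in B]$. The goal reduces to showing $a \ge \eps^p \alpha^p/4$ or $b \ge \eps^p/4$.

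The key observation is that on each of $A$, $B$, $C$ the integrand $|f-\alpha|^p$ admits a simple pointwise bound: on $C$ we have $|f-\alpha| \le \eps\alpha/4$; on $A$, since $f \in [0,1]$, we have $|f-\alpha| \le 1$; and on $B$, since $f \ge 0$, we have $|f-\alpha| \le \alpha$. Summing these contributions yields
\[
\eps^p\alpha^p \;\le\; \Ex[|f-\alpha|^p] \;\le\; a \cdot 1 + b \cdot \alpha^p + (\eps\alpha/4)^p.
\]

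Now I would argue by contradiction: assume both $a < \eps^p \alpha^p / 4$ and $b < \eps^p/4$. Then the right-hand side above is strictly less than
\[
\frac{\eps^p \alpha^p}{4} + \frac{\eps^p \alpha^p}{4} + \frac{\eps^p \alpha^p}{4^p} \;\le\; \frac{3\eps^p \alpha^p}{4},
\]
where the last inequality uses $p \ge 1$ so that $4^p \ge 4$. This contradicts the lower bound $\eps^p \alpha^p$ and completes the proof. There is no serious obstacle here; the only tiny subtlety is to use the asymmetric bounds $|f-\alpha| \le 1$ on $A$ versus $|f-\alpha| \le \alpha$ on $B$, which is exactly why the two conclusions have different thresholds ($\eps^p \alpha^p/4$ versus $\eps^p/4$).
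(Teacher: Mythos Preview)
Your proof is correct and follows essentially the same approach as the paper: partition according to the size of $|f-\alpha|$, bound the contribution of the central region by $(\eps\alpha/4)^p$, and use the asymmetric bounds $|f-\alpha|\le 1$ on $A$ and $|f-\alpha|\le \alpha$ on $B$ to force one of the two conclusions. The only cosmetic difference is that the paper phrases the final step as a direct disjunction rather than as a contradiction.
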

\begin{proof}
From the assumption, 
\[\Ex_\bfx |f(\bfx)-\alpha|^p   = \norm{f-\alpha}_{p}^p \ge \eps^p \alpha^p.\]
We   have 
\[ \Ex \left[|f(\bfx)-\alpha|^p \1_{\left[|f(\bfx)-\alpha| \le \frac{\eps\alpha}{4}\right]} \right] \le  (\eps/4)^p \alpha^p \le \frac{\eps^p}{4} \alpha^p, \]
and therefore, 
\begin{align*}
\frac{3\eps^p}{4} \alpha^p &\le  \Ex \left[|f(\bfx)-\alpha|^p \1_{\left[f(\bfx)>\alpha\left(1+\frac{\eps}{4}\right)\right]}\right]+ \Ex \left[|f(\bfx)-\alpha|^p \1_{\left[f(\bfx)<\alpha\left(1-\frac{\eps}{4}\right)\right]}\right] \\
&\le \Ex \left[ \1_{\left[f(\bfx)>\alpha\left(1+\frac{\eps}{4}\right)\right]}\right]+ \alpha^p\Ex \left[ \1_{\left[f(\bfx)<\alpha\left(1-\frac{\eps}{4}\right)\right]}\right],
\end{align*}
which implies that one of the two cases (i) or (ii) must hold.
\end{proof}

\paragraph{Main technical lemma.}
Next, we prove the following lemma, which constitutes the technical core of the proof of \cref{thm:main_graph_count}.

\begin{lemma}[Main technical lemma]
\label{lem:MainTechnical}
Let $H=([k],E)$ be an oriented graph with $m$ edges such that $(u,v) \in E$ implies $u<v$. For every $0<\eps<1$, there exists $\delta=\delta(\eps,H) \ge \eps^2 2^{-4 m-10}$ such that the following holds. Let $d$ be the maximum indegree of a vertex in $H$ and let $\alpha>0$.  Let $\cX_1,\ldots,\cX_k$ be finite sets, and for every edge $(u,v) \in E$, let $f_{uv}\colon \cX_u \times \cX_v \to \{0,1\}$ with $\Ex[f_{uv}]=\alpha_{uv} \ge \alpha$ be such that 
\begin{equation}
\label{eq:LargeNorm_assumption}
\left|\Ex \prod_{(u,v) \in E} f_{uv}(\bfx_u,\bfx_v) - \prod_{(u,v)\in E}\alpha_{uv} \right| \ge \eps \prod_{(u,v)\in E}\alpha_{uv}.
\end{equation}
There exists $(u,v) \in E$ and $p \lesssim \frac{m}{\eps} \log(2/\alpha)$ such that at least one of the following holds:
\begin{itemize}
\item[(i)] 
\begin{equation}\label{eq:LargeNormForDensityIncMulti}
    \norm{f_{uv}}_{U(2(d-1),p)} \geq (1+\delta) \alpha_{uv}.
\end{equation} 
\item[(ii)] 
\begin{equation}\label{equation:multipartitecase2}
\norm{\Ex_{\bfx_v \sim \cX_v}f_{uv}( \cdot,\bfx_v) - \alpha_{uv}}_{p} \geq \delta \alpha_{uv}. 
\end{equation}
\end{itemize}
\end{lemma}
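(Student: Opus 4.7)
The plan is to prove the contrapositive: I assume that for every edge $(u,v) \in E$, both $\|f_{uv}\|_{U(2(d-1),p)} \le (1+\delta)\alpha_{uv}$ and $\|\Ex_{\bfx_v} f_{uv}(\cdot,\bfx_v) - \alpha_{uv}\|_p \le \delta\alpha_{uv}$ hold, with $\delta \asymp \eps^2 2^{-O(m)}$ and $p \asymp (m/\eps)\log(2/\alpha)$; I will show this forces $|\Ex \prod_e f_e - \prod_e \alpha_e| < \eps \prod_e \alpha_e$, contradicting \eqref{eq:LargeNorm_assumption}. The argument uses the telescoping identity
\[\prod_{i} f_{e_i} - \prod_{i} \alpha_{e_i} = \sum_{i=1}^{m} \Bigl(\prod_{j<i}\alpha_{e_j}\Bigr)(f_{e_i}-\alpha_{e_i})\Bigl(\prod_{j>i}f_{e_j}\Bigr)\]
under an ordering that processes the in-edges of each head vertex $v$ consecutively. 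I will bound each summand by $O(\eps/m)\cdot\prod_e\alpha_e$ and sum to obtain the desired contradiction.

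For the $i$-th summand with $e_i=(u_0,v_0)$, marginalising out the variables other than $\bfx_{u_0},\bfx_{v_0}$ rewrites it as $\prod_{j<i}\alpha_{e_j}\cdot\Ex_{\bfx_{u_0},\bfx_{v_0}}(f_{e_i}-\alpha_{e_i})\,\Phi(\bfx_{u_0},\bfx_{v_0})$, where the ``context'' $\Phi=\Ex_{\text{other vars}}\prod_{j>i}f_{e_j}$ depends on $\bfx_{v_0}$ only through the at most $d-1$ surviving in-edges of $v_0$. When $\Phi$ does not depend on $\bfx_{v_0}$ (as for the last in-edge of $v_0$ in the ordering), the integrand splits and H\"older gives $|\Ex_{\bfx_{u_0}}\Phi\cdot h_{e_i}|\le\|\Phi\|_q\|h_{e_i}\|_p$ with $h_{e_i}(x)=\Ex_{\bfx_{v_0}}f_{e_i}(x,\bfx_{v_0})-\alpha_{e_i}$. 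Since $\|\Phi\|_\infty\le 1$ and $\|\Phi\|_1$ is comparable to $\prod_{j>i}\alpha_{e_j}$ (via an inductive application of the forward counting lemma on a subgraph), the choice of $p$ ensures $\|\Phi\|_q \lesssim \|\Phi\|_1$, yielding the needed $O(\delta\prod\alpha)$ bound. In the general case, I apply Cauchy--Schwarz in $\bfx_{v_0}$ to double $\bfx_{u_0}\to(\bfx_{u_0},\bfx_{u_0}')$, then H\"older in $\bfx_{v_0}$ to lift the single $\bfx_{v_0}$-copy to $p$ copies at constant cost, and finally the Gowers--Cauchy--Schwarz inequality (\cref{lem:decoupling}) on the resulting $(2(d-1))\times p$ bipartite configuration to bound the outcome by a product of $\|f_{u',v_0}\|_{U(2(d-1),p)}$, each at most $(1+\delta)\alpha_{u',v_0}$ by hypothesis.

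The principal obstacle will be the careful combinatorics of the second subcase: tracking how the variable multiplicities evolve through the Cauchy--Schwarz doubling and the $p$-fold H\"older lifting to verify that the resulting expression is controlled precisely by the $U(2(d-1),p)$-norm of each surviving in-edge function. The factor $2(d-1)$ appears naturally as (two copies of $\bfx_{u_0}$) $\times$ ($d-1$ surviving in-edges of $v_0$), and the exponent $p$ matches the H\"older lift. The final loss $\delta \ge \eps^2 2^{-4m-10}$ reflects the Cauchy--Schwarz squaring (producing $\eps^2$) together with the $2^{-O(m)}$ factor absorbing the pigeonhole and induction bookkeeping across the $m$ hybrid steps. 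I would also invoke \cref{lem:largeCenteredNormToHP} if needed to convert between $p$-norm deviations and pointwise large-deviation statements in the error analysis.
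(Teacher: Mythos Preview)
Your telescoping/hybrid strategy is a natural dual to the paper's direct induction, and your handling of the ``last in-edge'' case via H\"older against $\|h_{e_i}\|_p$ is fine. The gap is in your ``general case''. After Cauchy--Schwarz and the Gowers--Cauchy--Schwarz inequality, the factor coming from the distinguished edge $e_i=(u_0,v_0)$ is $\|f_{e_i}-\alpha_{e_i}\|_{U(2,p)}$ (or, after splitting off the marginal $h_{e_i}$, the norm $\|J\|_{U(2,p)}$ of the $y$-centered part $J(x,y)=f_{e_i}(x,y)-\Ex_{\bfy}f_{e_i}(x,\bfy)$), while the remaining in-edge factors are the $\|f_{u_s,v_0}\|_{U(2(d-1),p)}$ you correctly identify. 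Your hypotheses bound the latter by $(1+\delta)\alpha_{u_s,v_0}$, but they do \emph{not} make the former small: the triangle inequality only gives $\|f_{e_i}-\alpha_{e_i}\|_{U(2,p)}\le (2+\delta)\alpha_{e_i}$. With that bound, each hybrid term is $O(\prod_e\alpha_e)$ rather than $O\!\bigl(\tfrac{\eps}{m}\prod_e\alpha_e\bigr)$, and the sum does not beat $\eps\prod_e\alpha_e$.

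What is missing is precisely the ``unbalancing'' step (the paper's \cref{lem:PSD}, built on the Kelley--Meka positive-moments \cref{prop:odd_moments}): from $\Ex_{\bfy}J(x,\bfy)=0$ one has $\Ex[(J\circ J)^q]\ge 0$ for all $q$, and this positivity is what converts the near-tight hypothesis $\|f_{e_i}/\alpha_{e_i}\|_{U(2,p')}\le 1+\delta$ (together with $\|h_{e_i}\|_p\le\delta\alpha_{e_i}$, hence $\|1+J/\alpha_{e_i}\|_{U(2,p')}\le 1+2\delta$) into the genuinely small bound $\|J\|_{U(2,p)}\lesssim\sqrt{\delta}\,\alpha_{e_i}$ for suitable $p\le p'$. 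This is also where the $\eps^2$ in $\delta\asymp\eps^2 2^{-O(m)}$ actually originates---not from a bare Cauchy--Schwarz doubling as you suggest, but from the quadratic loss in \cref{lem:PSD}. Without invoking this lemma (or an equivalent positivity argument), your contrapositive does not close. The paper's proof runs the same mechanism in the forward direction: it isolates $J$ via an inductive ``edge-detachment'' step, shows $\|J\|_{U(2,p)}$ is large, and then applies \cref{lem:PSD} to force either $\|f_{e_i}\|_{U(2,p')}\ge(1+\delta)\alpha_{e_i}$ or $\|h_{e_i}\|_2\ge\delta\alpha_{e_i}$.
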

\begin{proof}
We can assume that $H$ contains no isolated vertices.
We can also assume that $H$ contains no isolated edges. Indeed, suppose that $H$ contains an isolated edge $e$, and let $H'$ be obtained from $H$ by removing $e$. If \eqref{eq:LargeNorm_assumption} holds for $H$ then it also holds for $H'$. In this way, we can remove all isolated edges from $H$.

We will prove the theorem by induction on the quantity $\kappa(H) \defeq 2|E|-d_1(H)$, where $d_1(H)$ is the number of vertices of degree $1$ in $H$. More precisely, the induction hypothesis states that the theorem holds with $\delta(\eps,H) = \eps^2 2^{-2\kappa(H)-10}$.

To verify the base case $\kappa(H)=0$, note that in this case,  $H$ is empty, and so the L.H.S.\ of \eqref{eq:LargeNorm_assumption} is always $0$, and the statement is trivially true.
 
We normalize the functions $f_{uv}\colon \cX_u \times \cX_v \to \{0,1\}$ by defining $F_{uv}\colon \cX_u \times \cX_v \to\{0,\alpha_{uv}^{-1}\}$ as $F_{uv} \defeq \alpha_{uv}^{-1}\cdot f_{uv} $. We can rephrase the assumption, \eqref{eq:LargeNorm_assumption}, as 
\begin{equation}
\label{eq:LargeNorm_assumption_normalized}
\left|\Ex \prod_{uv \in E} F_{uv}(\bfx_u,\bfx_v) - 1 \right| \ge \eps. 
\end{equation}

Consider the vertex $ s=k$, which is a sink in the graph since $H$ is topologically ordered.
Let $u_1,\ldots,u_\ell$ be the neighbors of $s$, where $1\le \ell \le d$.
Let $H'=(V\setminus\{s\},E')$ where $E' \subsetneq E$ is the set of the edges that do not involve $s$. 

Since $H$ has no isolated edges,   $d_1(H')\geq d_1(H)-\ell$, and so \[\kappa(H')=2|E'|-d_1(H')\le 2(|E|-\ell)-(d_1(H)-\ell)=\kappa(H)-\ell.\]
By applying the induction hypothesis to $H'$, we may assume 
\begin{equation}
    \label{eq:IH_remove_one_node}
    \left| \Ex \prod_{uv \in E'} F_{uv}(\bfx_u,\bfx_v) -1 \right| \le \frac \eps 2,
\end{equation}
as otherwise, we are done with 
\[\delta=\delta(\eps/2,H')= (\eps/2)^2 2^{-2{\kappa(H')}-10} \ge \eps^2 2^{-2{\kappa(H)}-10}=\delta(\eps,H),
\] as desired.
In particular, we may assume 
\[ \Ex \prod_{uv \in E'} F_{uv}(\bfx_u,\bfx_v)  \le 2. \]
Therefore, using $F_{uv} \in \{0, \alpha_{uv}^{-1}\}$, we have for every $q \ge 1$,
\begin{equation}
    \label{eq:subgraph_moment_bound}
    \Ex\left[ \left(\prod_{uv \in E'} F_{uv}(\bfx_u,\bfx_v)\right)^q \right]=
 \Ex \left[ \prod_{uv\in E'}\alpha_{uv}^{-(q-1)} \prod_{uv \in E'} F_{uv}(\bfx_u,\bfx_v) \right] \leq
 2\cdot \prod_{uv\in E'}\alpha_{uv}^{-(q-1)}\leq 2 \cdot \alpha^{m(1-q)}.
\end{equation}
We will single out the edge $u_1s$ and set $E''\defeq E \setminus \{u_1s\}$. 
\paragraph{Case 1: $\ell=1$.}
In this case, $E''=E'$. 
 Combining \eqref{eq:IH_remove_one_node} and \eqref{eq:LargeNorm_assumption_normalized}, we have
\[ \left|\Ex \left[(F_{u_1s}(\bfx_{u_1},\bfx_s)-1)\prod_{uv \in E'} F_{uv}(\bfx_u,\bfx_v)\right] \right|= \left|\Ex \prod_{uv \in E} F_{uv}(\bfx_u,\bfx_v) -   \Ex \prod_{uv \in E'} F_{uv}(\bfx_u,\bfx_v)\right|\ge \frac \eps 2. \]
Consequently, we may apply H\"older's inequality over the variables $\bfx \defeq \{\bfx_u\}_{u \in V \setminus \{s\}}$ with the even integer $p=2m\ceil{\log (2\alpha^{-1})}\ge 2m\log (2\alpha^{-1})$ and $\frac{1}{p}+\frac{1}{q}=1$, obtaining 
\begin{align*}
     \frac \eps 2\le &\left|\Ex \left[(F_{u_1s}(\bfx_{u_1},\bfx_s)-1)\prod_{uv \in E'} F_{uv}(\bfx_u,\bfx_v)\right]\right|\\
    &  \le \Ex_{\bfx} \left[\left(  \prod_{uv \in E'} F_{uv}(\bfx_u,\bfx_v)\right)^q \right]^{1/q}  \times   \Ex_\bfx \left[\left( \Ex_{\bfx_s} (F_{u_1s}(\bfx_{u_1},\bfx_s)-1) \right)^{p} \right]^{1/p} \\
    &\le 3\cdot\Ex_\bfx \left[\left( \Ex_{\bfx_s} (F_{u_1s}(\bfx_{u_1},\bfx_s)-1)  \right)^{p} \right]^{1/p},
\end{align*}
where the last inequality follows from
\begin{equation}
\label{eq:Holder_alpha}
 \left(2\cdot\alpha^{m(1-q)}\right)^{1/q} \leq
 2\cdot\alpha^{m\left(\frac{1}{q}-1\right)} =
 2\cdot\alpha^{-\frac{m}{p}} \leq 2\cdot\alpha^{-\frac{m}{2m\log(2\alpha^{-1})}} \leq 3.
\end{equation}
We conclude that 
\[\|\Ex_{\bfy} F_{u_1s}(\cdot,\bfy)-1\|_p\ge  \eps/ 6 \ge \delta,\]
which verifies~\cref{equation:multipartitecase2}. 
\paragraph{Case 2: $\ell \geq 2$.}
Consider the graph $H''$ obtained from $H$ by ``detaching'' the edge $u_1s$ from the node $s$. More formally, to obtain $H''$, we add a new vertex $s'$ to $H$, remove the edge $u_1s$, and add the edge $u_1s'$ instead.  Note that $\kappa(H'') \le \kappa(H)-1$.  By applying the induction hypothesis to $H''$ with parameter $\eps/2$, we may assume that \begin{equation}
\label{eq:IH-indegrees}
\left|\Ex_{\bfx}\left[ \left( \prod_{uv \in E''} F_{uv}(\bfx_u,\bfx_v)\right)\cdot\left( \Ex_{\bfz \sim \cX_s} F_{u_1s}(\bfx_{u_1},\bfz) \right)\right]  - 1 \right| \le \frac{\eps}{2}, 
\end{equation}
as otherwise, the assertion of the theorem follows from the induction hypothesis with parameter  $\delta=\delta(\eps/2,H'')\geq \delta(\eps,H)$,
as desired.
Set $J(x,y) \defeq F_{u_1s}(x,y)- \Ex_{\bfz \sim \cX_s }  F_{u_1s}(x,\bfz)$. We have
\begin{multline*}
     \Ex \left[J(\bfx_{u_1},\bfx_s)\prod_{uv \in E''} F_{uv}(\bfx_u,\bfx_v)\right] =\\ \Ex \prod_{uv \in E} F_{uv}(\bfx_u,\bfx_v) -   \Ex_{\bfx }\left[\left(  \prod_{uv \in E''} F_{uv}(\bfx_u,\bfx_v)\right) \cdot\Ex_{\bfz \sim \cX_s} F_{u_1s}(\bfx_{u_1},\bfz) \right].
\end{multline*}
Combining the above equation with \eqref{eq:LargeNorm_assumption_normalized} and \eqref{eq:IH-indegrees}, we obtain  
\[ \left|\Ex \left[J(\bfx_{u_1},\bfx_s)\prod_{uv \in E''} F_{uv}(\bfx_u,\bfx_v)\right] \right| \ge \frac{\eps}{2}. \]
We may apply H\"older's inequality over the variables $\bfx \defeq \{\bfx_u\}_{u \in V \setminus \{s\}}$ with the even integer $p=2m\ceil{\log (2\alpha^{-1})}\ge 2m\log (2\alpha^{-1})$ and $\frac{1}{p}+\frac{1}{q}=1$, obtaining 
\begin{align*}
\Bigg|\Ex \Bigg[J(\bfx_{u_1},\bfx_s) &\prod_{uv \in E''} F_{uv}(\bfx_u,\bfx_v)\Bigg] \Bigg| 
 \\ 
&=\left|\Ex_{\bfx}  \left[\left( \prod_{uv \in E'} F_{uv}(\bfx_u,\bfx_v) \right)\cdot \Ex_{\bfx_s} \left[J(\bfx_{u_1},\bfx_s)\cdot  \prod_{uv \in E'' \setminus E'} F_{uv}(\bfx_u,\bfx_v) \right] \right]\right|  
\\ &\le \Ex_{\bfx} \left[\left(  \prod_{uv \in E'} F_{uv}(\bfx_u,\bfx_v)\right)^q \right]^{1/q}  \times 
   \Ex_\bfx \left[\left( \Ex_{\bfx_s} J(\bfx_{u_1},\bfx_s)\cdot\prod_{i=2}^\ell F_{u_is}(\bfx_{u_i},\bfx_s)  \right)^{p} \right]^{1/p} \\
 &\stackrel{(\ast)}\le 3 \cdot\Ex_\bfx \left[\left( \Ex_{\bfx_s} J(\bfx_{u_1},\bfx_s)\cdot\prod_{i=2}^\ell F_{u_is}(\bfx_{u_i},\bfx_s)  \right)^{p} \right]^{1/p},
\end{align*}
where $(\ast)$ follows from \eqref{eq:subgraph_moment_bound} and \eqref{eq:Holder_alpha}.
We conclude that
\begin{equation}
\label{eq:J_and_F_large}
 \Ex_\bfx \left[\left( \Ex_{\bfx_s} J(\bfx_{u_1},\bfx_s)\prod_{i=2}^\ell F_{u_is}(\bfx_{u_i},\bfx_s)  \right)^{p} \right]^{1/p}
 \ge \frac{\eps}{6}.
\end{equation}
On the other hand, by Cauchy--Schwarz and then \eqref{eq:Gowers-CS}, we have
\begin{align*}
\Ex_\bfx \Bigg[\Bigg( \Ex_{\bfx_s} J(\bfx_{u_1},\bfx_s)\prod_{i=2}^\ell F_{u_is}&(\bfx_{u_i},\bfx_s)  \Bigg)^{p}\Bigg]\\
&= \Ex_{\bfy_1,\ldots,\bfy_p} \Ex_{\bfx_{u_1}} \prod_{j=1}^p J(\bfx_{u_1},\bfy_j) \Ex_{\bfx_{u_2},\ldots,\bfx_{u_\ell}} \prod_{i=2}^\ell\prod_{j=1}^p F_{u_is}(\bfx_{u_i},\bfy_j) \\ 
&\le \norm{J}^p_{U(2,p)} \cdot\left[\Ex_{\bfy_1,\ldots,\bfy_p} \left(\Ex_{\bfx_2,\ldots,\bfx_\ell} \prod_{i=2}^\ell\prod_{j=1}^p F_{u_is}(\bfx_i,\bfy_j)\right)^2\right]^{1/2} \\ 
&\le  \norm{J}^p_{U(2,p)}   \prod_{i=2}^\ell \norm{F_{u_is}}_{U(2(\ell-1),p)}^{p}.
\end{align*}
Taking the $p$-th root of this upper bound and combining it with \eqref{eq:J_and_F_large}, we get
\[ \norm{J}_{U(2,p)}   \prod_{i=2}^\ell \norm{F_{u_is}}_{U(2(\ell-1),p)} \ge \frac{\eps}{6}. \]
If there exists $i \in \{2,\ldots,\ell\}$ with $\norm{F_{u_is}}_{U(2(\ell-1),p)} \ge 2$, we are done by monotonicity of grid norms (\cref{cor:monotonicity_of_grid_semi_norm}) with $2(d-1) \ge 2(\ell-1)$, and otherwise, 
\[ \norm{J}_{U(2,p)}  \ge \frac{\eps}{6 \cdot 2^{\ell-1}}. \]
Note that $\Ex_{\bfz\sim \cX_s} J(x,\bfz) =0$ for every $x \in \cX_{u_1}$. Therefore, we can apply \cref{lem:PSD} (stated and proved below) to $J$ and obtain that for $p' =2p\ceil{1/\eps}$, we have
 \[\norm{1+J}_{U(2,p')}  \ge 1+\frac{\eps^2}{45 \cdot 2^{2\ell}}\ge 1+2\delta.\]
The last inequality is true because  $\kappa(H)=\sum_{v\in V\colon\deg(v)>1}\deg(v)\ge \deg(s)=\ell$.
Since $J(x,y) = F_{u_1s}(x,y)- D(x,y)$ with $D(x,y) \defeq \Ex_{\bfz \sim \cX_s }  F_{u_1s}(x,\bfz)$, applying the triangle inequality,  we have 
\[\norm{F_{u_1s}}_{U(2,p')}+ \norm{D - 1}_{U(2,p')}  \ge 1+2\delta.\]
Now either \[\norm{F_{u_1s}}_{U(2,p')} \ge 1+\delta,  \]
in which case, we are done, or 
\[\norm{D - 1}_{U(2,p')} \ge \delta. \]
Since $D(x,y)-1$ does not depend on $y$, we have $\norm{D - 1}_{U(2,p')} = \norm{D - 1}_2$, and thus in this latter case, we get
\[ \norm{D - 1}_2 \ge \delta, \]
which verifies~\cref{equation:multipartitecase2}.      
\end{proof}

It remains to prove \Cref{lem:PSD} below.

\begin{lemma}
    \label{lem:PSD} If $g\colon \cX \times \cY \to \R$ satisfies $\norm{g}_{U(2,p)}^2  \ge \eps$ for some even $p$ and some $\eps\in(0,1/2]$   and 
    \[\Ex_\bfz g(x,\bfz) =0 \qquad  \forall x \in \cX,\]
    then for every even $p' \ge  \frac{2p}{\eps}$, we have
    $\norm{1+g}_{U(2,p')}  \ge 1+\eps/5$.
\end{lemma}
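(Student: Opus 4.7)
My plan is to decompose $U_{2,p'}(1+g)$ orthogonally using the mean-zero hypothesis, then read off a lower bound on its dominant contributions using $\|g\|_{U(2,p)}^2 \ge \eps$, and finally extract the $p'$-th root.

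Concretely, I would write $U_{2,p'}(1+g) = \Ex_\bfy F(\bfy)^2$ with $F(\bfy) \defeq \Ex_\bfx \prod_j(1+g(\bfx,\bfy_j))$, expand each factor to get $F = \sum_{S \subseteq [p']} G_S$ where $G_S(\bfy_S) \defeq \Ex_\bfx \prod_{j \in S}g(\bfx,\bfy_j)$, and observe that the mean-zero assumption $\Ex_\bfz g(x,\bfz) = 0$ forces $\Ex_{\bfy_j} G_S = 0$ whenever $j \in S$, making the family $\{G_S\}$ pairwise orthogonal in $L^2$. This yields the clean identity
\[
U_{2,p'}(1+g) \;=\; \sum_{s=0}^{p'} \binom{p'}{s}\, U_{2,s}(g),
\]
with each summand non-negative. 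For even $s$, introducing the kernel $M(x_1, x_2) \defeq \Ex_\bfy g(x_1, \bfy) g(x_2, \bfy)$ gives $U_{2,s}(g) = \|M\|_s^s$; the hypothesis becomes $\|M\|_p \ge \eps$, and monotonicity of $L^r$ norms then yields $U_{2,s}(g) \ge \eps^s$ for every even $s \ge p$.

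The remaining task is to show that $\sum_{s \text{ even},\, p \le s \le p'} \binom{p'}{s} \eps^s$ is large enough. Under $p' \ge 2p/\eps$ and $\eps \le 1/2$, the mode of $\binom{p'}{s}\eps^s$ sits at $p'\eps/(1+\eps) \ge 4p/3 > p$, so most of the mass of $(1+\eps)^{p'} = \sum_s \binom{p'}{s}\eps^s$ lies at $s \ge p$. Combining a median bound on $\mathrm{Bin}(p', \eps/(1+\eps))$ with $\sum_{\text{even } s}\binom{p'}{s}\eps^s = \tfrac12((1+\eps)^{p'}+(1-\eps)^{p'})$ should give $\sum_{\text{even } s \ge p} \binom{p'}{s}\eps^s \ge (1+\eps)^{p'}/4$. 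Extracting the $p'$-th root and using $4^{1/(2p')} \le 2^{\eps/4}$ (since $p' \ge 4/\eps$) together with the elementary inequality $\sqrt{1+\eps}/2^{\eps/4} \ge 1 + \eps/5$ for $\eps \in (0, 1/2]$ then delivers $\|1+g\|_{U(2,p')} \ge 1+\eps/5$.

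The main obstacle I anticipate is constant-pushing in the middle inequality $\sum_{\text{even },\, s \ge p}\binom{p'}{s}\eps^s \ge (1+\eps)^{p'}/4$: bounding the tail $\sum_{s < p}\binom{p'}{s}\eps^s$ via Chernoff becomes weak when $p$ is small (e.g., $p = 2$), so I expect to need either a direct comparison of consecutive binomial terms near the mode or an explicit hand computation in the smallest cases.
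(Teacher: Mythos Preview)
Your approach is correct and, once unpacked, coincides with the paper's proof. Your orthogonality identity
\[
U_{2,p'}(1+g)=\sum_{s=0}^{p'}\binom{p'}{s}\,U_{2,s}(g)
\]
is exactly the binomial expansion of $\Ex_{x_1,x_2}(1+M(x_1,x_2))^{p'}$, where $M=g\circ g$ is your kernel; equivalently, it encodes the paper's key identity $(1+g)\circ(1+g)=1+g\circ g$, which holds precisely because of the mean-zero hypothesis. From that point the paper simply quotes Kelley--Meka's moment inequality (their Proposition~5.7: if $\Ex(f-1)^k\ge 0$ for all odd $k$ and $\|f-1\|_{p}\ge\eps$, then $\|f\|_{p'}\ge 1+\eps/2$ for $p'\ge 2p/\eps$) with $f=1+M$, whereas your binomial-tail argument is essentially a direct proof of that proposition. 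So the two routes are the same proof with the black box opened: yours is a bit more self-contained, at the price of the constant-pushing you already flagged, while the paper sidesteps that bookkeeping by citing the ready-made inequality and then taking a square root.
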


The proof will make use of the following result of Kelley and Meka.
\begin{proposition}[\protect{\cite[Proposition~5.7]{KM23}}]\label{prop:odd_moments}
 Suppose $f\colon \Omega\to\R$ is such that
    \begin{itemize}
        \item $\Ex (f-1)^k \geq 0$ for all odd $k\in\N$, and
        \item $\norm{f-1}_{k_0}\geq \eps$ for some even $k_0\geq 2$ and some $\eps\in[0,1/2]$.
    \end{itemize}
    Then, for any integer $k'\geq 2k_0/\eps$,
    \[\norm{f}_{k'}\geq 1+\frac{\eps}{2}.\]
\end{proposition}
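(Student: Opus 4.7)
The plan is to reduce $\norm{1+g}_{U(2,p')}$ to a standard $L^{p'}$-moment on the space $\cX \times \cX$ and then invoke the Kelley--Meka boosting inequality \cref{prop:odd_moments}. The bipartite structure of the grid norm hints at this: averaging out the $\bfy$ coordinate first should leave a kernel on $\cX \times \cX$ whose moments fully control $\norm{1+g}_{U(2,p')}$.

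First I would define $F\colon \cX \times \cX \to \R$ by
\[F(x_1,x_2) \defeq \Ex_{\bfy \sim \cY} (1+g(x_1,\bfy))(1+g(x_2,\bfy)),\]
and check that unwinding the definition of $U(2,p')$, together with the independence of the $\bfy_j$ and the fact that $p'$ is even, yields
\[\norm{1+g}_{U(2,p')}^{2p'} \;=\; \Ex_{\bfx_1,\bfx_2}\, F(\bfx_1,\bfx_2)^{p'} \;=\; \norm{F}_{p'}^{p'},\]
so that $\norm{1+g}_{U(2,p')} = \norm{F}_{p'}^{1/2}$. Expanding the product inside $F$ and using the vanishing-mean hypothesis $\Ex_{\bfz} g(x,\bfz)=0$ (applied at $x=x_1$ and $x=x_2$) kills the two linear-in-$g$ cross terms and leaves $F = 1 + h$, where $h(x_1,x_2) \defeq \Ex_{\bfy} g(x_1,\bfy)\, g(x_2,\bfy)$ is a Gram-style kernel.

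Second, I would verify the two hypotheses of \cref{prop:odd_moments} for $F$ on $\Omega = \cX \times \cX$ with $k_0 \defeq p$. Swapping expectations produces the key identity
\[\Ex h^k \;=\; \Ex_{\bfy_1,\dots,\bfy_k} \left( \Ex_{\bfx} \prod_{i=1}^k g(\bfx,\bfy_i) \right)^{\!2} \;\ge\; 0\]
for every $k \ge 1$, which in particular gives positivity of the odd moments $\Ex(F-1)^k = \Ex h^k$. Specializing $k=p$ and comparing with the definition of the grid semi-norm yields
\[\norm{F-1}_p^{p} \;=\; \norm{h}_p^{p} \;=\; \Ex h^p \;=\; U_{2,p}(g) \;=\; \norm{g}_{U(2,p)}^{2p},\]
so that $\norm{F-1}_p = \norm{g}_{U(2,p)}^{2} \ge \eps$, placing us in the admissible range $\eps \in (0,1/2]$ of \cref{prop:odd_moments}.

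Finally, \cref{prop:odd_moments} yields $\norm{F}_{p'} \ge 1 + \eps/2$ for every integer $p' \ge 2p/\eps$. Combined with $\norm{1+g}_{U(2,p')} = \norm{F}_{p'}^{1/2}$, this gives
\[\norm{1+g}_{U(2,p')} \;\ge\; \sqrt{1 + \eps/2} \;\ge\; 1 + \eps/5,\]
where the last inequality follows from $(1+\eps/5)^2 = 1 + 2\eps/5 + \eps^2/25 \le 1 + \eps/2$ whenever $\eps \le 5/2$. I expect the main difficulty to be mostly bookkeeping: tracking the even-exponent conventions in the $U(2,\cdot)$ norm and setting up the substitution $F = 1+h$ correctly. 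Once $h$ is recognized as a positive-semidefinite Gram kernel, non-negativity of \emph{all} its moments (odd ones included) is automatic, which is precisely the structural input that \cref{prop:odd_moments} needs.
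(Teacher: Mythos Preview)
Your write-up is not a proof of \cref{prop:odd_moments} at all: it is a proof of \cref{lem:PSD}. The statement you were asked to establish concerns an arbitrary $f\colon\Omega\to\R$ with nonnegative odd centered moments and a lower bound on $\norm{f-1}_{k_0}$; nowhere do grid norms, a product space $\cX\times\cY$, or a function $g$ with $\Ex_{\bfz} g(x,\bfz)=0$ appear in its hypotheses. Worse, your argument \emph{invokes} \cref{prop:odd_moments} as a black box, so read as a proof of that proposition it is circular. The paper itself does not prove \cref{prop:odd_moments}; it quotes it from \cite{KM23}.

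That said, if one reinterprets your proposal as a proof of \cref{lem:PSD}, it is correct and essentially identical to the paper's argument. Your kernel $h(x_1,x_2)=\Ex_{\bfy} g(x_1,\bfy)g(x_2,\bfy)$ is exactly what the paper calls $g\circ g$; your $F=1+h$ is $(1+g)\circ(1+g)$, and the identity $(1+g)\circ(1+g)=1+g\circ g$ (coming from the vanishing-mean hypothesis) is precisely the expansion you perform. Both arguments then feed $F$ into \cref{prop:odd_moments} and take a square root. So the content is fine, but it is aimed at the wrong target.
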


\begin{proof}[Proof of \cref{lem:PSD}]
Let $g\circ g\colon \cX \times \cX \to \R$ be defined as 
$(g \circ g)(x_1,x_2) \defeq \Ex_\bfy g(x_1,\bfy)g(x_2,\bfy)$. Note that 
\[ \norm{g}_{U(2,p)}^2  = \norm{g \circ g}_p. \]
Moreover, for every $q \in \N$, 
\[ \Ex (g \circ g)^q = \Ex_{\bfy_1,\ldots,\bfy_q} \left(\Ex_\bfx \prod_{i=1}^q g(\bfx,\bfy_i) \right)^2 \ge 0. \]
It follows from  \cref{prop:odd_moments} that for every integer $p' \ge \frac{2p}{\eps}$, 
\[ \norm{g\circ g+1}_{p'} \ge 1+ \frac{\eps}{2}. \]
Moreover, $\Ex_\bfy g(\cdot,\bfy) \equiv 0$ implies that $(g+1)\circ (g+1)=(g \circ g)+1$, and thus 
\[ \norm{1+g}_{U(2,p')} = \norm{(g+1) \circ (g+1)}_{p'}^{1/2} \ge \sqrt{1+ \frac{\eps}{2}}\geq 1+\frac{\eps}{5}. \qedhere \]
\end{proof}

Finally, we are ready to deduce  \cref{thm:main_graph_count} from  \cref{lem:MainTechnical} (main technical lemma). We repeat the statement of  \cref{thm:main_graph_count} for convenience.

\countingLemmaGraphs*

\begin{proof}

By applying \cref{lem:MainTechnical} to $f_{uv} \defeq A_{uv}$ and using the monotonicity of the grid norms, for some   $p\leq \frac{8m}{\eps} \log \frac{2}{\alpha}$, $\delta=\eps^22^{-4m-10}$ and $(u,v)\in E$, at least
one of  the following holds:
\begin{itemize}
\item[(i)]
\[
\|A_{uv} \|_{U(2k,p)} \ge (1+\delta)\alpha_{uv};
\]
\item [(ii)]
\[
 \norm{\Ex_\bfy\left[ A_{uv}(\cdot,\bfy)\right] - \alpha_{uv}}_p  \ge \delta\alpha_{uv}.
\]
\end{itemize}

\begin{itemize}
    \item[\textbf{Case (i)}] 
    Note that by \Cref{remark:equivbipartite}, \[\|A_{uv} \|_{U(2k,p)} = t^b_{K_{2k,p}}(A_{uv}).\]
    In this case, we can apply \Cref{corollary:KLMbipartite} with $H = K_{2k,p}$, which implies the existence of $S \times T \subseteq \cX_u \times \cX_v$ 
with 
\[\frac{|S||T|}{|\cX_u||\cX_v|} \geq  \Omega(\delta\alpha^{2pk+1})
\ge 2^{-O\left(\frac{m^2}{\eps}\log^2(2/\alpha)\right)},\]
that satisfies \cref{itm:main_graph_cound_density_rectangle_increment} of \cref{thm:main_graph_count}.
  \item[\textbf{Case (ii)}]  By applying \cref{lem:largeCenteredNormToHP} to the function $f\colon\cX_u\to[0,1]$ given by 
$f(x)\coloneqq\Ex_\bfy[A_{uv}(x,\bfy)]$, we conclude that one of the following two cases holds: 
\begin{itemize}
    \item[a)] $\Ex \left[ \1_{\left[f(x)>\alpha_{uv}\left(1+\frac{\delta}{4}\right)\right]}\right]\geq (\delta\alpha_{uv})^p/4 = 2^{-O\left(\frac{m^2}{\eps^2}\log^2(2/\alpha)\right)}$.
    \item[b)] $\Ex \left[ \1_{\left[f(x)<\alpha_{uv}\left(1-\frac{\delta}{4}\right)\right]}\right] =\delta^p/4\ge \alpha^{O(m^2/\eps^2)}$. 
\end{itemize}

In case (a), let $S \defeq \{x\in\cX_u : \Ex_\bfy[A_{uv}(x,\bfy)]>\alpha_{uv}(1+\delta/4)\}$ and $T = \cX_v$ and conclude \cref{itm:main_graph_cound_density_rectangle_increment} of \cref{thm:main_graph_count}: \[\Ex_{(\bfx,\bfy)\in S\times T} A_{uv}(\bfx,\bfy)\geq (1+\delta/4)\alpha_{uv}.\]

In case (b), let $S=\{x\in\cX_u : \Ex_\bfy[A_{uv}(x,\bfy)] <\alpha_{uv}(1-\delta/4)\}$ and conclude \cref{itm:main_graph_cound_bounded_degrees_left} of \cref{thm:main_graph_count}:  \[\Ex_{\bfx \in \cX_v}[A_{uv}(z,\bfx)] \le (1-\delta/4) \alpha_{uv} \qquad \text{for all } z \in S. \qedhere \]

\end{itemize}
\end{proof}

\section{Kelley--Meka-type bounds for binary systems}\label{section:KMbounds}

In this section, we establish   Kelley--Meka-type bounds for the density of sets that do not contain translations of non-degenerate instances of binary systems of linear forms.

\GeneralizedKelleyMekaAbelian* 
 
The rest of this section is devoted to the proof of \cref{thm:L-freeAbelian}.

\subsection{Preliminaries}\label{sec:notation}

This section discusses the necessary notation and terminology from additive number theory.

\paragraph{Set addition.}
Let $G$ be a finite abelian group. For an integer $k \ge 0$ and $x\in G$, let 
\[k x = \underbrace{x + x + \dots + x}_{k \text{ times}} \ \text{ and } \ (-k)x=-kx.\]
We denote the sumset of two sets $A,B \subseteq G$ as 
\[A + B \defeq \set{a+b : a\in A, b \in B}. \]
Given a natural number $k \in \mathbb{N}$, we denote the iterated sumset of $A$ as 
\[kA \defeq \set{a_1+\dots+a_k : a_1,\dots,a_k \in A}, \]
and  we let 
\[k\cdot A \defeq \set{ka : a \in A}. \]

 \paragraph{Bohr sets.}
We start with some standard notation and some basic facts about Bohr sets. Let $G$ be a finite abelian group and let $\widehat{G}$  denote the character group of $G$.

Given a set $B \subseteq G$, we define $\mu_B$ to be the uniform probability measure on $B$. 
The relative density of $A \subseteq G$ in $B$ is denoted by $\mu_B(A) = \frac{|A \cap B|}{|B|}$. The normalized indicator function of $B$ is defined as $\varphi_B(\cdot) = \frac{|G|}{|B|}B(\cdot)$ so that $\|\varphi_B\|_1 = 1$.  

Given two functions $f,g\colon G\to\mathbb{R}$, define the convolution and the cross-correlation as 
\[f*g(x) = \Ex_\bfy f(\bfy)g(x-\bfy) \text{ and }f\star g(x) = \Ex_\bfy f(\bfy)g(x+\bfy),\] 
respectively. Note that  
\[\langle f*g, h\rangle = \langle f , g \star h\rangle.\]
We recall the definition of a Bohr set. 
\begin{definition}[Bohr sets] 
For a non-empty $\Gamma \subseteq \widehat{G}$ and $\tau \in [0,2]$,  define the corresponding \emph{Bohr set} $B= \Bohr(\Gamma,\tau)$ as
\[ \Bohr(\Gamma,\tau) \defeq \set{x \in G : |\chi(x)-1|\le \tau \ \ \forall \chi \in \Gamma }. \] 
We call $\Gamma$ the \emph{frequency set} of $B$, and $\tau$ the \emph{width}. We define the \emph{rank} of $B$, denoted by $\rank(B)$, to be the size of $\Gamma$. 
\end{definition}

When we speak of a Bohr set, we implicitly refer to the triple $(\Gamma, \tau, \Bohr(\Gamma,\tau))$, since the set $\Bohr(\Gamma,\tau)$ alone does not uniquely determine the frequency set nor the width.

We note some easy facts about Bohr sets. We have  
\begin{equation}
\label{claim:bohrIntersection}
\Bohr(\Gamma,\tau) \cap \Bohr(\Gamma',\tau)=\Bohr(\Gamma \cup \Gamma',\tau),
\end{equation}
and 
\[\Bohr(\Gamma,\tau)+\Bohr(\Gamma,\tau') \subseteq \Bohr(\Gamma,\tau+\tau').\]
In particular, for any $\lambda \in \mathbb{N}$, we have the following inclusion: 
\[\lambda \cdot \Bohr(\Gamma,\tau) \subseteq \lambda \Bohr(\Gamma,\tau)
\subseteq \Bohr(\Gamma,\lambda\tau).\]
Since $\chi(\lambda x)=\chi(x)^\lambda$, we also have 
\begin{equation}
\label{eq:Bohr_multiply}
\lambda \cdot \Bohr(\Gamma,\tau) = \Bohr(\lambda \cdot \Gamma,\tau).
\end{equation}

\begin{definition}[Dilate of a Bohr set]
If $B=\Bohr(\Gamma,\tau)$ is a Bohr set, and $\rho\ge 0$, then $B_\rho=\Bohr(\Gamma,\rho\tau)$ is the \emph{dilate} of $B$ with parameter $\rho$.
\end{definition}

While Bohr sets, in general, lack approximate group-like properties, Bourgain~\cite{MR1726234} observed that certain Bohr sets are
approximately closed under addition in a certain weak sense. 

\begin{definition}[Regularity] A Bohr set $B$ of rank $d$ is \emph{regular} if for all $\kappa$ with  $|\kappa| \le \frac{1}{100d}$, we have
\[(1 - 100d |\kappa|)|B| \le |B_{1+\kappa}| \le (1 + 100d |\kappa|)|B|.\]
\end{definition}

Note that by \eqref{eq:Bohr_multiply}, if $(\lambda, |G|) = 1$ and $\Bohr(\Gamma,\tau)$ is a regular Bohr set of rank $d$, then $\lambda \cdot \Bohr(\Gamma,\tau)$ is also a regular Bohr set of rank $d$ and has the same size as $\Bohr(\Gamma,\tau)$.  We will use this fact frequently in the proofs.

The following two lemmas show that a low-rank Bohr set $B$ can be made into a regular Bohr set $B_\rho$ without decreasing its size by much. 

\begin{lemma}[{\cite{MR1726234}; See \cite[Lemma 4.25]{TaoVu}}]
\label{lem:BohrRegular}
    For any Bohr set $B$, there exists $\rho \in \left[\frac{1}{2}, 1\right]$ such that $B_\rho$ is regular.
\end{lemma}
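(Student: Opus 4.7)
The plan is to argue that the set of non-regular dilation parameters in $[1/2,1]$ has small Lebesgue measure, so that a regular $\rho$ must exist. To this end, I would consider the non-decreasing function $F : [1/2,1] \to \R$ defined by $F(\rho) \defeq \log|B_\rho|$ and control the total ``logarithmic growth'' $F(1) - F(1/2)$ from above, then use a covering argument to show that too much of $[1/2,1]$ being bad would force $F(1) - F(1/2)$ to be even larger, a contradiction.

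The first step is a standard volume comparison: for a Bohr set $B$ of rank $d$, one has $|B_1| \le C^d |B_{1/2}|$ for an absolute constant $C$ (for instance $C = 4$). This follows from a covering argument in $G$: any element of $B_1$ can be approximated modulo $B_{1/4}$ by elements of $B_{1/2}$, using the fact that $B_\rho + B_{\rho'} \subseteq B_{\rho+\rho'}$ and a counting/pigeonhole bound on the number of $B_{1/4}$-cosets intersecting $B_1$. Taking logs gives
\[
F(1) - F(1/2) \le d \log C = O(d).
\]

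For the second step, let $E \subseteq [1/2,1]$ be the set of $\rho$ for which $B_\rho$ is not regular. For every $\rho \in E$ there exists $\kappa = \kappa_\rho$ with $0 < |\kappa| \le \tfrac{1}{100d}$ such that $|B_{(1+\kappa)\rho}|$ differs from $|B_\rho|$ by more than a factor of $1 + 100d|\kappa|$, and in particular, using $\log(1+x) \ge x/2$ for small positive $x$, the interval $I_\rho$ with endpoints $\rho$ and $(1+\kappa)\rho$ has length $\asymp |\kappa|$ while the $F$-increment across $I_\rho$ is at least $\gtrsim d|\kappa|$. Thus each $I_\rho$ has ``slope'' (increment over length) at least $\Omega(d)$. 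Applying a Vitali $5r$-covering lemma to $\{I_\rho\}_{\rho \in E}$, I extract a disjoint subfamily $\{I_{\rho_j}\}$ whose $5$-enlargement covers $E$, so $|E| \lesssim \sum_j |I_{\rho_j}|$, and then
\[
F(1) - F(1/2) \;\ge\; \sum_j \bigl(F(\text{right endpoint of }I_{\rho_j}) - F(\text{left endpoint of }I_{\rho_j})\bigr) \;\gtrsim\; d \sum_j |I_{\rho_j}| \;\gtrsim\; d \cdot |E|.
\]
Combining with the bound $F(1) - F(1/2) = O(d)$ from the first step yields $|E| = O(1)$, and tracking constants carefully gives $|E| < 1/2$. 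Since $[1/2,1]$ has length $1/2$, there exists $\rho \in [1/2,1]\setminus E$, and $B_\rho$ is regular by definition.

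The main technical obstacle is the pigeonholing/covering step: $|B_\rho|$ is integer-valued, so $F$ is a step function, and one must be careful about boundary values of $\rho$ at the jumps (for instance, by defining $F$ to be right-continuous and noting the one-sided nature of the regularity condition). One must also handle both signs of $\kappa_\rho$ symmetrically, which is routine by applying the covering argument to each sign separately and combining.
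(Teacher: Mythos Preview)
The paper does not prove this lemma; it is quoted as a black box from Bourgain and Tao--Vu. Your outline is precisely the standard argument found in those references: the Tao--Vu presentation works in the variable $a=\log\rho$ and phrases the covering step as the weak-type $(1,1)$ maximal inequality for the monotone measure $df$, which is your Vitali argument in different clothing, and the constants do work out with room to spare (using the paper's own $|B_{1/2}|\ge 8^{-d}|B_1|$ and the fact that monotonicity of $\rho\mapsto|B_\rho|$ kills two of the four sign cases for $\kappa$, each bad interval has slope at least $50d$, so the one-dimensional Vitali factor of $3$ gives $|E|\le 3\ln 8/50<1/8$).
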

\begin{lemma}[{\cite[Lemma 4.20]{TaoVu}}]
\label{lem:Bohr_Size}
    If $\rho \in (0,1)$ and $B$ is a Bohr set of rank $d$ then $|B_\rho| \geq \left(\frac{\rho}{4}\right)^d |B|$.
\end{lemma}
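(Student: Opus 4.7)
The plan is a pigeonhole argument in a $d$-dimensional character parametrization of $G$. Letting $\Gamma = \{\chi_1,\ldots,\chi_d\}$ be the frequency set of $B$, I would define $\phi\colon G \to (\mathbb{R}/\mathbb{Z})^d$ via $\chi_j(x) = e^{2\pi i \phi_j(x)}$. Using the identity $|e^{2\pi i \theta}-1| = 2|\sin(\pi\theta)|$ together with the convexity of $\arcsin$ on $[0,1]$ (which yields $\arcsin(y)\le \pi y/2$), one first checks that $x \in B$ iff every coordinate of $\phi(x)$ has distance at most $\tau_0 \defeq \arcsin(\tau/2)/\pi \le \tau/4$ from the origin in $\mathbb{R}/\mathbb{Z}$. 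In particular, $\phi(B) \subseteq [-\tau_0,\tau_0]^d$.

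Next, I would partition this box into $N^d$ axis-aligned sub-cubes of side at most $\rho \tau_0/2$, with $N\approx 4/\rho$. By the pigeonhole principle, at least one sub-cube has $\phi$-preimage in $B$ of size at least $(\rho/4)^d|B|$; call this preimage $T\subseteq B$ and fix any $x_0 \in T$.

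The heart of the argument is the claim that $T - x_0 \subseteq B_\rho$. For $x \in T$, each coordinate of the difference $\phi(x-x_0) = \phi(x) - \phi(x_0)$ has modulus at most $\rho\tau_0/2$ in $\mathbb{R}/\mathbb{Z}$, and the estimate $|e^{2\pi i\delta}-1| \leq 2\pi|\delta|$ combined with $\tau_0 \le \tau/4$ yields $|\chi_j(x-x_0)-1| \leq \pi\rho\tau_0 \leq \rho\tau$ for every $j$, placing $x - x_0$ in $B_\rho$. Since $x \mapsto x - x_0$ is injective, $|B_\rho| \geq |T| \geq (\rho/4)^d|B|$, as required.

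The main obstacle is purely the arithmetic bookkeeping of constants in the pigeonhole step: to secure the sharp factor $(\rho/4)^d$ rather than $(c\rho)^d$ for some smaller $c$, one must carefully exploit the slack $\tau_0 \le \tau/4$ (a factor of $\pi/4$) to absorb both the $2\pi$ arising from $|e^{2\pi i\delta}-1|\leq 2\pi|\delta|$ and the rounding in $\lceil 4/\rho \rceil$ when $4/\rho$ is not an integer. A technical secondary point is verifying that the $\mathbb{R}/\mathbb{Z}$-subtraction in $\phi(x)-\phi(x_0)$ does not introduce wraparound, which is automatic from $\tau_0 \le \tau/4 \le 1/2$ (or, in the edge case $\tau$ near $2$, handled by observing that $B_\rho$ is already large).
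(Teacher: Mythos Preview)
The paper does not supply its own proof of this lemma; it simply cites \cite[Lemma 4.20]{TaoVu}. Your pigeonhole argument is exactly the standard one underlying that reference, so in spirit you are reproducing the intended proof rather than offering an alternative route.

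Your outline is correct. A couple of minor comments on the technicalities you flag. First, the wraparound concern is a non-issue: once $x,x_0$ lie in the same sub-cube, the \emph{real-number} difference $\phi_j(x)-\phi_j(x_0)$ has modulus at most the side length $s$, and $|\chi_j(x-x_0)-1| = 2|\sin(\pi(\phi_j(x)-\phi_j(x_0)))| \le 2\pi s$ holds for that real difference regardless of how it sits in $\R/\Z$. Second, for the sharp constant $(\rho/4)^d$, rather than bounding the side by $\rho\tau_0/2$ and then invoking $|e^{2\pi i\delta}-1|\le 2\pi|\delta|$, it is cleaner to take the side to be exactly $\arcsin(\rho\tau/2)/\pi$ (the threshold for membership in $B_\rho$). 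Then the number $N$ of sub-intervals per coordinate satisfies
\[
N \;=\; \left\lceil \frac{2\arcsin(\tau/2)}{\arcsin(\rho\tau/2)} \right\rceil \;\le\; \left\lceil \frac{\pi}{\rho} \right\rceil,
\]
using $\arcsin(y)\le \pi y/2$ in the numerator and $\arcsin(y)\ge y$ in the denominator. A short case check (splitting on whether $\rho \le 4-\pi$) shows $\lceil \pi/\rho\rceil \le 4/\rho$ for all $\rho\in(0,1)$, which delivers the exact factor $(\rho/4)^d$ without further fuss.
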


One of the nuances of working with Bohr sets is that even if $B$ is a regular Bohr set (e.g., a large interval centred at $0$), then for randomly chosen $\bfx,\bfx' \in B$, the distribution of $\bfx+\bfx'$ is very different from the uniform distribution on $B$. A common strategy to address this issue is to work with two Bohr sets. We will have an additional regular Bohr set $B'=B_{\rho}$ for some sufficiently small $\rho$ (e.g., a smaller interval centred at $0$). Then, for every $y_0 \in B'$,  when $\bfx \in B$ is chosen randomly and uniformly, the distribution of $\bfx+y_0$ is close to the uniform distribution on $B$.  

 \begin{lemma}[{\cite[Lemma 4.5]{MR3953879}}]
\label{lem:BohrSetSumUniform}
If $B$ is a regular Bohr set of rank $d$ and $ x \in B_\rho$ with $\rho \in (0,1)$, then 
\[\norm{\varphi_B - \varphi_{B+x}}_1 \le  200 \rho d.\]
\end{lemma}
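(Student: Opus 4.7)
My plan is to reduce the $L^1$ bound to a symmetric-difference bound and then exploit regularity.

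First I would unpack the normalization. Since $\varphi_B = \frac{|G|}{|B|}\mathbf{1}_B$ and similarly for $B+x$ (using $|B+x|=|B|$), the difference $\varphi_B - \varphi_{B+x}$ takes value $\pm |G|/|B|$ exactly on $B \triangle (B+x)$ and vanishes elsewhere. With the normalized $L^1$ norm $\|f\|_1 = \Ex_y |f(y)| = \frac{1}{|G|}\sum_y |f(y)|$, this immediately gives
\[
\|\varphi_B - \varphi_{B+x}\|_1 \;=\; \frac{|B \triangle (B+x)|}{|B|}.
\]

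The key step is to show $B_{1-\rho} \subseteq B \cap (B+x)$. Clearly $B_{1-\rho} \subseteq B$. For the other inclusion, suppose $y \in B_{1-\rho}$ and $x \in B_\rho$, where $B=\Bohr(\Gamma,\tau)$. For any character $\chi \in \Gamma$,
\[
|\chi(y-x) - 1| \;=\; |\chi(y)\chi(x)^{-1} - 1| \;=\; |\chi(y)-\chi(x)| \;\le\; |\chi(y)-1| + |\chi(x)-1| \;\le\; (1-\rho)\tau + \rho\tau \;=\; \tau,
\]
so $y - x \in B$, i.e.\ $y \in B+x$. Therefore $|B \cap (B+x)| \ge |B_{1-\rho}|$, which yields
\[
|B \triangle (B+x)| \;=\; 2|B| - 2|B \cap (B+x)| \;\le\; 2(|B| - |B_{1-\rho}|).
\]

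Now I would invoke regularity. If $\rho \le \frac{1}{100d}$, then by definition of regularity with $\kappa = -\rho$, we have $|B_{1-\rho}| \ge (1 - 100 d\rho)|B|$, so
\[
\|\varphi_B - \varphi_{B+x}\|_1 \;\le\; \frac{2(|B| - |B_{1-\rho}|)}{|B|} \;\le\; 200 \rho d.
\]
If instead $\rho > \frac{1}{100d}$, the claimed bound $200\rho d$ exceeds $2$, and the inequality is trivial since $\varphi_B$ and $\varphi_{B+x}$ are both probability densities (with respect to the uniform measure on $G$), so the triangle inequality gives $\|\varphi_B - \varphi_{B+x}\|_1 \le 2$.

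There is no serious obstacle here; the only substantive step is the set-theoretic inclusion $B_{1-\rho} \subseteq B \cap (B+x)$, which uses only the triangle inequality on characters, and the rest is just unwinding definitions and applying the regularity hypothesis.
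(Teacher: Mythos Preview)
Your proof is correct and follows essentially the same strategy as the paper: reduce to bounding $|B\triangle(B+x)|/|B|$, handle the case $\rho>\tfrac{1}{100d}$ trivially, and otherwise invoke regularity via a containment with a dilate. The only cosmetic difference is that the paper uses the \emph{upper} inclusion $B\cup(B+x)\subseteq B_{1+\rho}$ together with $|B_{1+\rho}|\le(1+100d\rho)|B|$, whereas you use the dual \emph{lower} inclusion $B_{1-\rho}\subseteq B\cap(B+x)$ together with $|B_{1-\rho}|\ge(1-100d\rho)|B|$; both yield the identical bound $200\rho d$.
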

\begin{proof}
The assertion of the lemma is equivalent to $|B \triangle (B+x)| \le  200 \rho d |B|$. Note $B+x \subseteq B_{1+\rho}$. 
If $\rho \ge \frac{1}{100 d}$, the lemma is obvious. Otherwise, by the regularity of $B$, we have $|B_{1+\rho}| \le (1+100d \rho)|B|$, which implies the  lemma. 
\end{proof}
We also need the following generalization of \cite[Lemma 12.1]{bloom2021breaking}. 
\begin{lemma}
\label{lem:BohrSet_Uniform}
There exists a constant $c>0$ such that the following is true. Let $B$ be a regular Bohr set of rank $d$, and $A \subseteq B$ with $\mu_{B}(A)=\alpha$. Let $\gamma>0$, and suppose $\cB $ is a finite family of Bohr sets where each $B'\in \cB$ is a subset of $B_\rho$, where $\rho \le  \frac{c \alpha \gamma}{d|\cB|}$. Then, there exists $x\in G$ such that one of the following two cases holds for the shift $A'=A+x$.
\begin{enumerate}
\item For every $B'\in \cB$, we have $|\mu_{B'}(A')-\alpha|\leq \gamma \alpha$. 

\item  There exists $B'\in \cB$ such that $\mu_{B'}(A') \ge \left(1+\frac{\gamma}{2|\cB|}\right) \alpha$.
\end{enumerate}
\end{lemma}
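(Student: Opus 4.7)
The plan is to apply a first-moment/pigeonhole argument using a random shift $x$ drawn uniformly from $B$ itself. The main quantitative input is that for each $B'\in\cB$,
\[
\Ex_{x\in B}[\mu_{B'}(A+x)] \;=\; \Ex_{y\in B',\, x\in B}[A(y-x)] \;\ge\; \alpha - 100 d\rho.
\]
Indeed, unpacking, $\Ex_{x\in B}[A(y-x)] = |B\cap(y-A)|/|B|$. For $y\in B'\subseteq B_\rho$, the translate $y-A$ sits inside $y+B\subseteq B_{1+\rho}$ (using $A\subseteq B = -B$), and the regularity of $B$ gives $|B_{1+\rho}\setminus B|\le 100 d\rho\,|B|$, hence $|B\cap(y-A)|\ge |A|-100 d\rho\,|B| = (\alpha-100 d\rho)|B|$; averaging over $y\in B'$ yields the displayed bound. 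Choosing the constant $c$ in the hypothesis $\rho\le c\alpha\gamma/(d|\cB|)$ small enough (e.g.\ $c\le 1/400$), the right-hand side is at least $\alpha\bigl(1-\gamma/(4|\cB|)\bigr)$.

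Next I would argue by contradiction, assuming that for every $x\in B\subseteq G$, neither conclusion of the lemma holds. Case~(2) failing for every $x$ means $\mu_{B'}(A+x)\le (1+\gamma/(2|\cB|))\alpha$ for every $B'\in\cB$ and every $x$. Case~(1) failing at a particular $x\in B$ produces some $B'(x)\in\cB$ with $|\mu_{B'(x)}(A+x)-\alpha|>\gamma\alpha$; the pointwise upper bound from the failure of case~(2) rules out an upward deviation, so $\mu_{B'(x)}(A+x)<(1-\gamma)\alpha$. By pigeonhole over the $|\cB|$ possible choices of $B'(x)$, there is a single $B'\in\cB$ for which the low event $\{\mu_{B'}(A+x)<(1-\gamma)\alpha\}$ occurs for at least a $1/|\cB|$-fraction of $x\in B$. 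Bounding $\Ex_{x\in B}[\mu_{B'}(A+x)]$ by the pointwise upper bound $(1+\gamma/(2|\cB|))\alpha$ on the complement and the strict bound $(1-\gamma)\alpha$ on the low event, a direct calculation gives
\[
\Ex_{x\in B}[\mu_{B'}(A+x)] \;\le\; \alpha\Big(1-\tfrac{\gamma}{2|\cB|}-\tfrac{\gamma}{2|\cB|^2}\Big) \;\le\; \alpha\Big(1-\tfrac{\gamma}{2|\cB|}\Big),
\]
which contradicts the lower bound $\alpha\bigl(1-\gamma/(4|\cB|)\bigr)$ from the first paragraph.

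I expect the main technical obstacle to be in the first step: identifying an averaging distribution for which $\Ex_x[\mu_{B'}(A+x)]$ is genuinely close to $\alpha$. Averaging over $x\in G$ yields only the tiny quantity $\alpha\,|B|/|G|$, so one cannot simply pick a uniform shift. The correct choice is to average over $x\in B$ itself, and the closeness to $\alpha$ relies on the interplay between $A\subseteq B$, the regularity of $B$, and the fact that $B'\subseteq B_\rho$ for small $\rho$, via the key comparison $y-A\subseteq B_{1+\rho}\approx B$. The full quantitative content of the lemma is then encoded in the balance between the regularity error $100d\rho$, the allowed density increment $\gamma/(2|\cB|)$, and the pigeonhole loss $1/|\cB|$, which is exactly what the hypothesis $\rho\le c\alpha\gamma/(d|\cB|)$ is calibrated to.
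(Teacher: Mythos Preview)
Your argument is correct and rests on the same first-moment estimate as the paper: for each $B'\in\cB$, the average $\Ex_{x\in B}[\mu_{B'}(A+x)]$ is within $O(d\rho)$ of $\alpha$ by the regularity of $B$ and the containment $B'\subseteq B_\rho$. The only difference is organizational: the paper averages over $B'\in\cB$ first, picks a single $x\in B$ where $\Ex_{B'\in\cB}\mu_{B'}(A-x)\ge(1-\gamma/(2|\cB|))\alpha$, and then does the pigeonhole at that fixed $x$ (if some $B''$ has $\mu_{B''}(A-x)\le(1-\gamma)\alpha$, the remaining $|\cB|-1$ sets must contain one with $\mu_{B'}(A-x)\ge(1+\gamma/(2|\cB|-2))\alpha$). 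Your version instead assumes failure for all $x$, pigeonholes over $\cB$ to fix a single $B'$ with many low-density shifts, and derives a contradiction with the per-$B'$ expectation bound. Both routes are short and essentially equivalent; the paper's is marginally more direct since it avoids the contradiction wrapper and the extra pigeonhole over $x$.
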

\begin{proof}
If $c>0$ is small enough, then by \Cref{lem:BohrSetSumUniform}, for every $B' \in \cB$,
\[\left|\langle A*\varphi_{B'}, \varphi_B\rangle - \langle A, \varphi_B\rangle \right|=\left|\langle A , \varphi_{B'} \star \varphi_B\rangle - \langle A, \varphi_B\rangle \right|  \leq \|\varphi_{B'} \star \varphi_{B}- \varphi_B\|_1 \leq \frac{\alpha \gamma}{2|\cB|}. \]
In particular, since $\langle A, \varphi_B\rangle = \alpha$, we have 
\[\langle A*\varphi_{B'}, \varphi_B\rangle \geq \left(1-\frac{\gamma}{2|\cB|}\right)\alpha, \]
which, by averaging over all $B'\in \cB$, implies
\[\left(1-\frac{\gamma}{2|\cB|}\right)\alpha\leq \Ex_{\bfx \in B}\Ex_{B'\in \cB} A*\varphi_{B'}(\bfx)=
\Ex_{\bfx \in B}\Ex_{B'\in \cB} \mu_{B'}(\bfx-A) =\Ex_{\bfx \in B}\Ex_{B'\in \cB} \mu_{B'}(A-\bfx) .\]
Here, the last equality follows from the fact that $B' = -B'$ for any Bohr set $B'$.
Let $x\in B$ be such that 
\begin{equation}\label{eq:avgbohr}
    \Ex_{B'\in \cB} \mu_{B'}(A-x)\geq \left(1-\frac{\gamma}{2|\cB|}\right)\alpha.
\end{equation}
Suppose there is $B''\in \cB$ such that $\mu_{B''}(A-x)\leq (1-\gamma)\alpha$. Combining this with \eqref{eq:avgbohr}, implies that there is $B'\in \cB$ such that $\mu_{B'}(A-x)\geq \left(1+\frac{\gamma}{2|\cB|-2}\right)\alpha$ which implies item (2).
\end{proof}
 
We finally give the definition of an algebraically spread set with respect to Bohr sets.

\begin{definition}[Algebraic Spreadness for Bohr Sets]\label{def:bohralg_spread} Let $B\subseteq G$ be a regular Bohr set and $A \subseteq B$ with $\mu_B(A)=\alpha$. We say that $A$ is a \emph{$(\delta,d',r)$-algebraically spread} subset of $B$ if for every $x_0 \in G$ and every regular Bohr set $B'$ with $\rank(B') \le \rank(B)+d'$ and $|B'| \ge 2^{-r} |B|$, we have 
\(\mu_{B'}(A+x_0)\le (1+\delta) \alpha. \)
\end{definition}

\subsection{Counting lemma over abelian groups}
In this section, we prove an algebraic counting lemma that will provide the key density increment step in the proof of \Cref{thm:L-freeAbelian}. Roughly speaking, this result shows that an algebraically spread set must contain many instances of a given binary system of linear forms.  
 
\begin{theorem}[Algebraic counting lemma]\label{theorem:AlgCountingGeneral}
    Let $\cL = \{L_1,\ldots,L_m\}$ be a binary system of linear forms over variables $x_1,\ldots,x_k$. For every $\eps>0$, there is a $\delta = \delta(\eps, \cL)>0$ such that the following is true. 
    
    Suppose $G$ is a finite abelian group such that $|G|$ is coprime with all the coefficients in $\cL$. Let $A \subseteq B$ have density $\mu_B(A) \ge \alpha$ where $B\subseteq G$ is a regular Bohr set of rank $d=O(1/\alpha)$. If $A$ is $(\delta,d',r)$-algebraically spread, where $r = O_\delta(d\log^{2}(2/\alpha)  + \log^{14}(2/\alpha))$ and $d' = O_\delta(\log^{12}(2/\alpha)) $ are sufficiently large, then there exist Bohr sets $B^{(1)},\ldots,B^{(k)}$ and a shift $A'=A+x_0$ such that the following statements hold.
\begin{itemize}
\item For every $i \in [k]$,  we have $|B^{(i)}| \ge 2^{-O_{\cL,\eps}(d \log^2(2/\alpha))}|B|$; 
\item We have
\begin{equation}\label{eq:LinearSysCountAlgCountLemma}
\Ex_{\bfx_1\in B^{(1)},\ldots,\bfx_k\in B^{(k)}} \prod_{i=1}^m A'(L_i(\bfx_1,\ldots,\bfx_k )) \ge (1-\eps) \alpha^m.
\end{equation}
\end{itemize}
\end{theorem}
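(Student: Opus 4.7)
The plan is to argue by contradiction: assume that for a carefully chosen shift $A' = A + x_0$ and Bohr sets $B^{(1)}, \ldots, B^{(k)}$, the count in \eqref{eq:LinearSysCountAlgCountLemma} is less than $(1-\eps)\alpha^m$, and from this deduce that $A$ fails to be $(\delta, d', r)$-algebraically spread. The main graph counting lemma (\cref{thm:main_graph_count}) reduces the combinatorial content of the algebraic statement to a bipartite density increment or a low-degree statement, and the remaining challenge is to translate each of these into an honest Bohr-set density increment.

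First, for each vertex $v$ of the underlying graph $H$ of $\cL$, I would construct $B^{(v)}$ as a regular dilate of $\bigcap_{\lambda} (\lambda^{-1} \cdot B_\rho)$, where $\lambda$ ranges over all nonzero coefficients of $x_v$ appearing in the forms $L_i$ and $\rho = \Theta_{\cL,\eps}(1)$ is small. By \eqref{eq:Bohr_multiply}, \eqref{claim:bohrIntersection}, and the regularity lemmas \cref{lem:BohrRegular} and \cref{lem:Bohr_Size}, each $B^{(v)}$ is a regular Bohr set of rank $O_\cL(d)$ and size $2^{-O_{\cL,\eps}(d)}|B|$. By construction, for every edge $(u,v) \in E(H)$ carrying the form $L = \lambda x + \eta y$, the distribution of $\lambda \bfx_u + \eta \bfx_v$ over $B^{(u)} \times B^{(v)}$ is $O(\eps/m)$-close in $L^1$ to the uniform measure on a regular Bohr set $\tilde B$ of rank $O_\cL(d)$. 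Applying \cref{lem:BohrSet_Uniform} to the family of these $\tilde B$'s then produces a shift $x_0$ such that either $\mu_{\tilde B}(A + x_0) = (1 \pm O(\eps))\alpha$ for every relevant $\tilde B$, or we already obtain a $(1+\Omega(\eps))$-factor density increment on one of the $\tilde B$'s, contradicting spreadness and finishing the proof.

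In the good case, define $A_{uv} \colon B^{(u)} \times B^{(v)} \to \{0,1\}$ by $A_{uv}(x,y) = A'(\lambda x + \eta y)$; the density is $\alpha_{uv} = (1 \pm O(\eps))\alpha$, and the count in \eqref{eq:LinearSysCountAlgCountLemma} is precisely the multipartite homomorphism density of $H$ into $\{A_{uv}\}$. Applying \cref{thm:main_graph_count} in its contrapositive form produces, for some edge $(u,v)$, either (i) a rectangle $S \times T \subseteq B^{(u)} \times B^{(v)}$ of density $2^{-O_{m,\eps}(\log^2(2/\alpha))}$ with $\Ex_{x\in S,\,y\in T}[A'(\lambda x+\eta y)] \geq (1+\delta)\alpha$, or (ii) a set $S \subseteq B^{(u)}$ of density $2^{-O_{m,\eps}(\log(2/\alpha))}$ such that $\Ex_{y\in B^{(v)}}[A'(\lambda z+\eta y)] \leq (1-\delta)\alpha$ for every $z \in S$. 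Case (ii) is the easier one: averaging over $z \in S$ together with $\mu_{\tilde B}(A') \approx \alpha$ and the near-uniformity of $\lambda z + \eta B^{(v)}$ inside $\tilde B$ forces some coset of a rank-$O_\cL(d)$ Bohr set to have $A$-density at least $(1+\Omega(\delta))\alpha$, violating spreadness.

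The main obstacle is Case (i): we must convert the rectangle inequality $\Ex[S(x) T(y) A'(\lambda x + \eta y)] \geq (1+\delta)\alpha \cdot \mu_{B^{(u)}}(S)\mu_{B^{(v)}}(T)$, with $S$ and $T$ of density only $2^{-O(\log^2(2/\alpha))}$, into a genuine density increment of $A$ on a regular Bohr set. The plan is to run an abelian-group analogue of the Kelley--Meka--Lovett sifting argument on the reparametrized convolution-type object $\varphi_{\lambda \cdot S} * \varphi_{\eta \cdot T}$: a first application of dependent random choice inside suitable Bohr neighborhoods produces a large set with strongly biased correlations, then a Chang/Sanders-style almost periodicity argument finds a rank-$O(\log^{12}(2/\alpha))$ Bohr set on a shift of which $A$ has relative density $(1+\Omega(\delta))\alpha$. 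The polylogarithmic exponents $12$ and $14$ in the statement come precisely from the cost of this one sifting-plus-almost-periodicity step, with the $d \log^2(2/\alpha)$ term in $r$ coming from the $2^{-O(d)}|B|$ size loss incurred each time we pass to $B^{(v)}$. The precise formulation of the sifting step for general abelian groups is the key technical ingredient, to be developed separately.
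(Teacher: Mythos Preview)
Your overall strategy is sound and matches the paper's: choose Bohr sets and a shift so that all the bipartite graphs $A_{uv}$ have density $\approx\alpha$, apply the graph counting lemma, rule out case~(ii) via degree regularity, and in case~(i) upgrade the rectangle increment to a Bohr-set increment via H\"older plus dependent random choice plus almost periodicity. The gap is in how you build the $B^{(v)}$.

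You propose to take all $B^{(v)}$ at the \emph{same} scale $\rho=\Theta_{\cL,\eps}(1)$, and then assert that the pushforward of $(\bfx_u,\bfx_v)\mapsto\lambda\bfx_u+\eta\bfx_v$ is $L^1$-close to the uniform measure on some regular Bohr set $\tilde B$. Over $\F_p^n$ (where Bohr sets are subspaces) this is fine, but over a general abelian group it is simply false: already in $\Z_N$ the sum of two uniforms on an interval is triangular, not uniform on any Bohr set. The paper instead builds a \emph{nested hierarchy} $\rho_1\ll\rho_2\ll\cdots\ll\rho_k$ and sets $B^{(i)}=K^{k-i}\cdot B_{\rho_i}$, arranged so that for every edge $(i,j)$ one has $\lambda_{ij}\cdot B^{(i)}\subseteq (\eta_{ij}\cdot B^{(j)})_\rho$. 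This is exactly what makes \cref{lem:BohrSetSumUniform} applicable row by row: for each fixed $x\in B^{(i)}$ the map $y\mapsto\lambda_{ij}x+\eta_{ij}y$ is close to uniform on $\eta_{ij}\cdot B^{(j)}$, which both gives $\alpha_{uv}\approx\alpha$ and forces every row of $A_{uv}$ to have density $\approx\alpha$, killing case~(ii) outright. Without the nesting you cannot establish either fact.

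Relatedly, your choice $\rho=\Theta_{\cL,\eps}(1)$ is too large. In Stage~2 one eventually needs the regularity error $O(\rho d)$ to be small compared to $\mu_{B'}(A')\mu_{B'}(T)$, and $T$ comes from the graph counting lemma with density only $2^{-O_{m,\eps}(\log^2(2/\alpha))}$. This forces $\rho\le\exp(-c\log^2(2/\alpha))/d$, and then $|B^{(i)}|\ge(\rho^k/4)^d|B|=2^{-O(d\log^2(2/\alpha))}|B|$, which is precisely the size bound in the theorem and the source of the $d\log^2(2/\alpha)$ term in $r$. Your bookkeeping ``$2^{-O(d)}|B|$ size loss'' is inconsistent with this and with the theorem statement.
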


To prove \cref{theorem:AlgCountingGeneral}, we will need an almost-periodicity theorem from \cite{BS23improvment}. The following theorem is stated with $U_1=U_2$ in \cite{BS23improvment}, but the same proof works for arbitrary $U_1$ and $U_2$.

\begin{theorem}[{Almost Periodicity \cite[Lemma 8]{BS23improvment}}]
\label{thm:almost_periodicity}
   Let  \(\eps \in (0, \frac{1}{10})\), and let \(B, B', B'' \subseteq G\) be regular Bohr sets of rank \(d\). Suppose that \(U_1, U_2 \subseteq B\), \(A_1 \subseteq B'\), and \(A_2 \subseteq B''\)  with densities  $\mu_B(U_1),\mu_B(U_2) \ge \tau$, $\mu_{B'}(A_1)=\alpha_1$ and $\mu_{B''}(A_2)=\alpha_2$. 
 Let \(\Gamma \subseteq G\) be a set with \(|\Gamma| \leq 2 |B''|\) such that
\begin{enumerate}
    \item \(\Ex_{\bfx_1\in A_1,\bfx_2\in A_2} \Gamma(\bfx_1-\bfx_2)\geq 1-\eps\)
    \item  For all $x\in \Gamma$, \(
   \Ex_{\bfz \in B}U_1(x+\bfz)U_2(\bfz) \ge \left(1+2\eps\right) \mu_B(U_1)\mu_B(U_2) \)
\end{enumerate}
Let $C = \log(2/\tau)+\log(d)+\log\log(2/\alpha_1)+\log\log(2/\alpha_2)$.
There exists $x_0 \in G$ and a regular Bohr set \(B^*\subseteq B'\) with
\[\rank(B^*) \leq d + O_\eps\left( \log^2(2/\tau)\log(2/\alpha_1) \log(2/\alpha_2)\right) \]
and   
\[|B^*|\geq 2^{-O_\eps(r)} |B'|\]
where 
\[r=C \cdot \left(d+\log^2(2/\tau)\log(2/\alpha_1) \log(2/\alpha_2)\right)\]
such that 
   \[\mu_{B^*}(U_1+x_0)\geq \left(1+\frac{\eps}{4}\right)\mu_{B}(U_1).\] 
\end{theorem}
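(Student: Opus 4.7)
The strategy follows the Croot--Sisask almost-periodicity framework as refined by Bloom and Sisask~\cite{BS23improvment}; since the case $U_1 = U_2$ is proved there in full, the task is to verify that the proof adapts to asymmetric $U_1, U_2$. Define the convolution-like function $f(x) \defeq \Ex_{\bfz \in B}[U_1(x+\bfz) U_2(\bfz)]$. Hypothesis~(2) says $f(x) \ge (1+2\eps)\mu_B(U_1)\mu_B(U_2)$ for all $x \in \Gamma$, and combining this with hypothesis~(1) and the non-negativity of $f$ yields
\[
    \Ex_{(\bfx_1,\bfx_2) \in A_1 \times A_2} f(\bfx_1 - \bfx_2) \ge (1-\eps)(1+2\eps)\mu_B(U_1)\mu_B(U_2) \ge (1+\eps)\mu_B(U_1)\mu_B(U_2).
\]
This reduces the problem to locating Bohr structure along which $f$ stays above its average by a multiplicative factor of roughly $(1+\eps/2)$.

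The first technical step is a random-sampling step \`a la Croot--Sisask. Draw $m = O_\eps(\log(2/\tau))$ points $\bfy_1, \dots, \bfy_m$ independently from $\mu_{A_1}$ and form the empirical approximation $F(x) = \frac{1}{m}\sum_j U_1(x + \bfy_j)$ to $U_1 * \mu_{A_1}$. Marcinkiewicz--Zygmund estimates with exponent $p = O_\eps(\log(2/\tau))$ show that $\|F - U_1 * \mu_{A_1}\|_p \lesssim \eps\,\mu_B(U_1)$ with positive probability; a parallel sampling against $A_2$ combined with a pigeonhole argument over $B'$-shifts produces a set $L \subseteq B'$ of relative density at least $2^{-O_\eps(r)}$ such that $\|\tau_s F - F\|_p \le \tfrac{\eps}{10}\mu_B(U_1)\mu_B(U_2)$ for every $s \in L - L$. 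A Chang-type theorem (in the form used by Sanders~\cite{SandersBogo} and by Bloom--Sisask) then produces a regular Bohr set $B^* \subseteq B'$ contained in the additive closure of $L$, with rank exceeding that of $B'$ by $O_\eps(\log^2(2/\tau) \log(2/\alpha_1) \log(2/\alpha_2))$ and size at least $2^{-O_\eps(r)} |B'|$; regularization via \cref{lem:BohrRegular} preserves these bounds up to constants.

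Finally, the approximate $B^*$-invariance of $F$ transfers, via the triangle inequality, to the convolution $U_1 * \mu_{A_1}$, and pigeonholing the integrated lower bound on $f$ over shifts of $B^*$ produces an $x_0 \in G$ with $\mu_{B^*}(U_1 + x_0) \ge (1+\eps/4)\mu_B(U_1)$. The main obstacle is parameter bookkeeping: the $\log^2(2/\tau)$ factor arises because the Chang step uses $p = O_\eps(\log(2/\tau))$ twice (once for the number of significant Fourier coefficients of $\1_L/|L|$, and once for resolving each of them to a width of $O_\eps(1)$), while the $\log(2/\alpha_i)$ factors come from having to work with Bohr sets inside $B'$ and $B''$ rather than in all of $G$. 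The asymmetry between $U_1$ and $U_2$ is harmless because the Croot--Sisask sampling is linear in $U_1$; the function $U_2$ enters only through the integrated inner-product bound on $f$, so once $B^*$ is located the density increment is entirely a statement about $U_1$.
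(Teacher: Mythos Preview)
The paper does not actually prove this theorem: it is quoted as \cite[Lemma~8]{BS23improvment}, with only the remark that ``the following theorem is stated with $U_1=U_2$ in \cite{BS23improvment}, but the same proof works for arbitrary $U_1$ and $U_2$.'' Your proposal is therefore not competing against a proof in the paper, but rather fleshing out exactly the claim the authors make---that the Bloom--Sisask argument goes through unchanged when $U_1\neq U_2$. Your sketch of the Croot--Sisask sampling, Chang-type spectral step, and final pigeonhole is the standard route and is consistent with the authors' assertion; in particular your observation that the asymmetry is harmless because the sampling is linear in $U_1$ while $U_2$ enters only through the integrated bound on $f$ is precisely the reason the extension is immediate.
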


We will also need the following theorem, whose proof uses one of the key ideas in the work of Kelley and Meka~\cite{KM23}, namely, the dependent random choice technique. 

\begin{theorem}[Dependent Random Choice]
\label{thm:dependent}
Let $G$ be a finite abelian group, let $\cX_1,\cX_2,\cY \subseteq G$, and $\tau>0$. Suppose $A \subseteq \cX_1$ and $B \subseteq \cX_2$  satisfy 
\[\Ex_{(\bfx_1,\bfx_2) \in \cX_1 \times \cX_2} \left(\Ex_{\bfy \in \cY}A(\bfx_1+\bfy)B(\bfx_2+\bfy)\right)^p  \ge (1+2\eps)^p \tau^p,\]
where $\eps>0$ and $p \ge \Omega(\log(2/\eps)/\eps)$. Then there exist $A_1\subseteq \cX_1$ and $A_2 \subseteq \cX_2$ with $\mu_{\cX_1}(A_1)\mu_{\cX_2}(A_2) \ge \tau^p$ such that 
\[\Pr_{(\bfx_1,\bfx_2) \in A_1 \times A_2}\left[ \Ex_{\bfy \in \cY}A(\bfx_1+\bfy)B(\bfx_2+\bfy) >\left(1+\eps\right) \tau \right] \ge 1 - \frac{\eps}{100}. \]
\end{theorem}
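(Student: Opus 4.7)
\textbf{Proof plan for \Cref{thm:dependent}.} The plan is to apply a dependent random choice argument, which is by now a standard technique in this circle of ideas: sample $p$ independent copies $\bfy_1,\ldots,\bfy_p \in \cY$, and for each sample define
\[U(\bfy_1,\ldots,\bfy_p) := \{x_1 \in \cX_1 : A(x_1+\bfy_j)=1 \text{ for all } j \in [p]\},\]
and analogously $V(\bfy_1,\ldots,\bfy_p) \subseteq \cX_2$ using $B$ in place of $A$. The point is that each pair $(x_1,x_2) \in U \times V$ automatically has $A(x_1+\bfy_j)B(x_2+\bfy_j)=1$ for every $j$, which will make $\Ex_{\bfy \in \cY}[A(x_1+\bfy)B(x_2+\bfy)]$ large on average over $(\bfx_1,\bfx_2) \in U \times V$, provided $\bfy_1,\ldots,\bfy_p$ are chosen well.

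Writing $f(x_1,x_2) := \Ex_{\bfy \in \cY} A(x_1+\bfy)B(x_2+\bfy)$, the identity $f(x_1,x_2)^p = \Ex_{\bfy_1,\ldots,\bfy_p}\prod_{j} A(x_1+\bfy_j) B(x_2+\bfy_j)$ combined with Fubini gives
\[ \Ex_{\bfy_1,\ldots,\bfy_p} \bigl[\mu_{\cX_1}(U)\,\mu_{\cX_2}(V)\bigr] \;=\; \Ex_{(\bfx_1,\bfx_2)} f(\bfx_1,\bfx_2)^p \;\ge\; (1+2\eps)^p \tau^p. \]
Now call a pair $(x_1,x_2) \in \cX_1 \times \cX_2$ \emph{bad} if $f(x_1,x_2) \le (1+\eps)\tau$, and let $N(\bfy_1,\ldots,\bfy_p)$ denote the number of bad pairs landing in $U \times V$. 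The same Fubini swap, restricted to bad $\bfx$, yields
\[ \Ex_{\bfy_1,\ldots,\bfy_p} \bigl[N/(|\cX_1||\cX_2|)\bigr] \;=\; \Ex_{(\bfx_1,\bfx_2)}\bigl[\mathbf{1}_{\text{bad}}(\bfx_1,\bfx_2)\, f(\bfx_1,\bfx_2)^p\bigr] \;\le\; (1+\eps)^p \tau^p. \]

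To finish I would consider the single random variable
\[ \Phi(\bfy_1,\ldots,\bfy_p) \;:=\; \mu_{\cX_1}(U)\,\mu_{\cX_2}(V) \;-\; \tfrac{100}{\eps}\cdot \tfrac{N}{|\cX_1||\cX_2|}. \]
The two displays above give $\Ex \Phi \ge \bigl((1+2\eps)^p - \tfrac{100}{\eps}(1+\eps)^p\bigr)\tau^p$. Using $\log\bigl(\tfrac{1+2\eps}{1+\eps}\bigr) \ge \tfrac{\eps}{2(1+\eps)} \ge \tfrac{\eps}{4}$, the hypothesis $p \ge C\log(2/\eps)/\eps$ for a large enough constant $C$ ensures $((1+2\eps)/(1+\eps))^p \ge 200/\eps$, hence $\Ex \Phi \ge \tfrac{1}{2}(1+2\eps)^p \tau^p \ge \tau^p$. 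Therefore some fixed choice $\bfy_1^\ast,\ldots,\bfy_p^\ast$ realizes $\Phi \ge \tau^p$, which simultaneously forces
\[ \mu_{\cX_1}(U^\ast)\,\mu_{\cX_2}(V^\ast) \ge \tau^p \qquad \text{and} \qquad N^\ast \le \tfrac{\eps}{100} \cdot |U^\ast||V^\ast|. \]
Setting $A_1 := U^\ast$ and $A_2 := V^\ast$ yields exactly the claimed conclusion, since the second inequality says that the fraction of bad pairs inside $A_1 \times A_2$ is at most $\eps/100$.

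I do not anticipate any serious obstacle: the argument is a clean first-moment computation, and the only delicate point is the calibration of $p$, which is what forces the gap $(1+2\eps)^p$ versus $(1+\eps)^p$ in the hypothesis to dominate the Markov-type slack factor $100/\eps$. The specific constant $100$ in $\eps/100$ is arbitrary and only affects the hidden constant in the lower bound on $p$.
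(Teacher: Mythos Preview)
Your proposal is correct and takes essentially the same approach as the paper: both expand the $p$-th moment as an average over random tuples $\bfy_1,\ldots,\bfy_p$, define the intersection sets (your $U,V$ are the paper's $A'_{\vec y},B'_{\vec y}$), bound the contribution of bad pairs by $(1+\eps)^p\tau^p$, and exploit the gap $(1+2\eps)^p$ versus $(1+\eps)^p$. The only cosmetic difference is in the final selection step: the paper first truncates to the set $\Lambda=\{\vec y:\alpha_{\vec y}\beta_{\vec y}>\tau^p\}$ and then takes a ratio of expectations to find a good $\vec y\in\Lambda$, whereas you combine both requirements into the single penalty functional $\Phi$ and choose $\vec y$ maximizing it; your packaging is arguably slightly cleaner but the content is identical.
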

\begin{proof}
Our assumption says 
\[\Ex_{\substack{(\bfx_1,\bfx_2) \in \cX_1 \times \cX_2\\ \bfy_1,\ldots,\bfy_p \in \cY}} \prod_{j=1}^p A(\bfx_1+\bfy_j)B(\bfx_2+\bfy_j)  \ge (1+2\eps)^p \tau^p.\]
Denote $\vec{x}=(x_1,x_2)$ and $\vec{y}=(y_1,\ldots,y_p)$, and let
\[\mathcal{F}(\vec{x},\vec{y}) \defeq  \prod_{j=1}^p A(x_1+y_j)B(x_2+y_j).\]
Define 
\[A'_{\vec{y}}\defeq  \{x \in \cX_1 : x+y_1,\dots,x+y_k \in A\} =  \bigcap_{i=1}^k (A-y_i),\]
and denote $\alpha_{\vec{y}} \defeq \mu_{\cX_1}(A'_{\vec{y}})$. Similarly, define 
\[B'_{\vec{y}}\defeq  \{x \in \cX_2 : x+y_1,\dots,x+y_k \in B\} = \bigcap_{i=1}^k (B-y_i),\]
and denote $\beta_{\vec{y}} \defeq \mu_{\cX_2}(B'_{\vec{y}})$.  We have 
\begin{align*}
(1+2\eps)^p \tau^p &\le \Ex_{\vec{\bfx},\vec{\bfy}} \mathcal{F}(\vec{x},\vec{y}) =  \Ex_{\vec{\bfy}} \left(\Ex_{\bfx_1  \in \cX_1} \prod_{j=1}^p A(\bfx_1+\bfy_j)\right)\left(\Ex_{\bfx_2 \in \cX_2} \prod_{j=1}^p B(\bfx_2+\bfy_j)\right) \\  &=\Ex_{\vec{\bfy}} [\alpha_{\vec{\bfy}}\beta_{\vec{\bfy}}].
\end{align*}
We will only be interested in those $\vec{y}$ where $A'_{\vec{y}}$ and $B'_{\vec{y}}$ are not very small. Let  
\[\Lambda = \left\{\vec{y} \in \cY^p : \alpha_{\vec{y}}\beta_{\vec{y}}> \tau^p  \right\}.\]
We have 
\begin{equation}
\label{eq:appendix1}
\Ex_{\vec{\bfy}} \left[ \Lambda(\vec{\bfy}) \Ex_{\vec{\bfx}} \left[\mathcal{F}(\vec{\bfx},\vec{\bfy})\right]\right] = \Ex_{\vec{\bfy}} \Lambda(\vec{\bfy}) \alpha_{\vec{\bfy}} \beta_{\vec{\bfy}}  \ge  (1+2\eps)^p \tau^p -  \tau^p= \left((1+2\eps)^p-1\right)\tau^p.  
\end{equation}

Let
\[S \defeq \left\{(x_1,x_2) \in \cX_1 \times \cX_2 : \Ex_{\bfy \in \cY}A(x_1+\bfy)B(x_2+\bfy) \le \left(1+\eps\right) \tau \right\}.\]
Note that 
\[ \Ex_{\vec{\bfx},\vec{\bfy}} \mathcal{F}(\vec{\bfx},\vec{\bfy}) = \Ex_{(\bfx_1,\bfx_2) \in \cX_1 \times \cX_2} \left(\Ex_{\bfy \in \cY}A(\bfx_1+\bfy)B(\bfx_2+\bfy)\right)^p, \]
and 
\begin{equation}
\label{eq:appendix2}
\Ex_{\vec{\bfx},\vec{\bfy}} \mathcal{F}(\vec{\bfx},\vec{\bfy})  S(\vec{\bfx})  \le \left(1+ \eps\right)^{p} \tau^p.
\end{equation}

Putting \eqref{eq:appendix1} and \eqref{eq:appendix2} together gives
\[
\frac{\Ex_{\vec{\bfy}}  \Ex_{\vec{\bfx}} \mathcal{F}(\vec{\bfx},\vec{\bfy}) S(\vec{\bfx})}{\Ex_{\vec{\bfy}} \Lambda(\vec{\bfy})\Ex_{\vec{\bfx}} \mathcal{F}(\vec{\bfx},\vec{\bfy})} \le \frac{(1+\eps)^p}{(1+2\eps)^p-1}.
\]
In particular, for $p=\Omega(\log(2/\eps)/\eps)$, there exists a choice of $\vec{y} \in \Lambda$ such that 
\[
\frac{ \Ex_{\vec{\bfx}} \mathcal{F}(\vec{\bfx},\vec{y}) S(\vec{\bfx})}{\Ex_{\vec{\bfx}} \mathcal{F}(\vec{\bfx},\vec{y})} \le \frac{(1+\eps )^p}{(1+2\eps)^p-1} \le \frac{\eps}{100}.
\]
Now note that $\mathcal{F}(\vec{x},\vec{y})=1$ iff $(x_1,x_2) \in A'_{\vec{y}} \times B'_{\vec{y}}$, and thus
\[
\Pr_{(\bfx_1,\bfx_2) \in A'_{\vec{y}} \times B'_{\vec{y}}} [(\bfx_1,\bfx_2) \in S]=\frac{ \Ex_{\vec{\bfx}} \mathcal{F}(\vec{\bfx},\vec{y}) S(\vec{\bfx})}{\Ex_{\vec{\bfx}} \mathcal{F}(\vec{\bfx},\vec{y})}  \le \frac{\eps}{100}.
\]
We set $A_1 \defeq A'_{\vec{y}}$ and $A_2 \defeq B'_{\vec{y}}$. Since $\vec{y} \in \Lambda$, we have $\mu_{\cX_1}(A_1)\mu_{\cX_2}(A_2) \ge \tau^p$, as desired.  
\end{proof}

\subsubsection{Proof of \texorpdfstring{\Cref{theorem:AlgCountingGeneral}}{Theorem \ref{theorem:AlgCountingGeneral}}}
Suppose the underlying graph of $\cL$ is an oriented graph $H = ([k],E)$ such that $(i,j)\in E$ implies $i<j$. For convenience, we re-index $\cL =  \{L_{ij}: (i,j) \in E\}$  where $L_{ij}(x_1,\ldots,x_k) = \lambda_{ij}x_{i}+ \eta_{ij} x_{j}$ with $i< j$. By our assumption, all the coefficients are coprime with $|G|$. 
Let $\gamma =\Omega_{m,\eps}(1)$ be a small constant to be determined. We emphasize that $\gamma$ does not depend on the particular choice of the coefficients in $\cL$. Let 
\begin{equation}
\label{eq:rho}
K \defeq \prod_{(i,j) \in E} |\eta_{ij}||\lambda_{ij}| \ \ \text{and} \ \ \rho \defeq \frac{\gamma \exp(-\gamma^{-1} \log^2(2/\alpha)) }{10^3 d K^{k+1}}.
\end{equation} 
By \Cref{lem:BohrRegular}, there exist $\frac{\rho^k}{2^{k} K^{k^2}}\le  \rho_1 \le \cdots \le \rho_k \le \frac{\rho}{K}$ such that the following properties hold. 
\begin{itemize}
\item[{\bf (P1)}] $B_{\rho_i}$'s are regular for all $i$; 
\item[{\bf (P2)}] $ \frac{\rho \rho_{i+1}}{2K^{k}} \le \rho_i \le  \frac{\rho \rho_{i+1}}{K^{k}}$ for all $i=1,\ldots,k-1$. In particular, $\frac{\rho^k}{2^{k} K^{k^2}}\le  \rho_1$.
\end{itemize}
Let $B^{(i)}= K^{k-i} \cdot B_{\rho_i}$ for all $i$. 
Note that the conditions on $\rho$ and $\rho_i$ guarantee the following. 
\begin{itemize}
\item[{\bf (P3)}] for all $(i,j) \in E$, we have 
\[\lambda_{ij} \cdot B^{(i)} = \lambda_{ij}K^{k-i} \cdot B_{\rho_i} \subseteq B_{ \rho_k K} \subseteq B_\rho \text{ and } \eta_{ij}\cdot B^{(j)} =\eta_{ij}K^{k-j}\cdot B_{\rho_j} \subseteq B_{\rho_k K} \subseteq  B_\rho;\]
\item[{\bf (P4)}] For every $(i,j)\in E$, since $i<j$ and $K$ is divisible by $\eta_{ij}$, we have $\frac{\lambda_{ij} K^{j-i}}{\eta_{ij}} \cdot B_{\rho_i} \subseteq B_{\rho \cdot \rho_j}$.
In particular, for every $(i,j)\in E$, we have 
\[ \lambda_{ij} \cdot B^{(i)} \subseteq B'_\rho \ \ \text{with} \ \ B' \defeq \eta_{ij} \cdot B^{(j)};\]  
 
\item[{\bf (P5)}] By \Cref{lem:Bohr_Size}, for all $i$, we have 
\[ |B^{(i)}| \ge (\rho_i/4)^d |B| \ge \left(\frac{\rho^k}{4 \times 2^{k} K^{k^2}}\right)^d |B| \ge 2^{-O_{\cL,\eps}(d \log^2(2/\alpha))}|B|,\]
where the last inequality uses the assumption $d=O(1/\alpha)$.  
\end{itemize}
Consider the collection $\cB = \{\lambda_{ij}\cdot B^{(i)} : (i,j)\in E\} \cup \{\eta_{ij}\cdot B^{(j)}: (i,j)\in E\}$ of Bohr sets and observe that $|\cB|\leq 2m$.  Note that by {\bf (P3)}, all these Bohr sets are subsets of $B_{\rho}$, and moreover, if $\gamma \leq O(1/m)$ is sufficiently small, we can ensure $\rho<\frac{c \alpha \gamma}{d|\cB|}$, where $c$ is the constant from \Cref{lem:BohrSet_Uniform}. Therefore, we can apply \Cref{lem:BohrSet_Uniform} with $A$, $\cB$, $d$, $\gamma$, $\alpha_*=\mu_B(A)\ge \alpha$ to guarantee that there is a shift $A' = A+x_0 $ such that one of the following two cases holds: 
\begin{enumerate}
    \item[{\bf (P6)}] 
    For every $B'\in \cB$, 
    \begin{equation}\label{eq:uniformOnBohrFamily}
    |\mu_{B'}(A')-\alpha_*|\leq \gamma \alpha_*;
    \end{equation}
    \item[{\bf (P$\mathbf{6'}$)}] There is $B'\in \cB$ such that $\mu_{B'}(A')\geq \left(1+\frac{\gamma}{2|\cB|}\right)\alpha_*\geq  \left(1+\frac{\gamma}{4m}\right)\alpha_*$.
\end{enumerate}
Since $A$ is $(\delta,d',r)$-algebraically spread, assuming $\delta < \frac{\gamma}{8m}$ and $r = \Omega_{m,\eps}(d \log^2(2/\alpha))$ is sufficiently small, {\bf (P$\mathbf{6'}$)} cannot hold, and therefore, \eqref{eq:uniformOnBohrFamily} holds. 
Assume to the contrary of the theorem that \eqref{eq:LinearSysCountAlgCountLemma} does not hold. In particular, 
\begin{equation}\label{eq:LinearSysCountAlgCountLemma_contra}
\left| \Ex_{\bfx_1\in B^{(1)},\ldots,\bfx_k\in B^{(k)}} \prod_{i=1}^m A'(L_i(\bfx_1,\ldots,\bfx_k )) - \alpha_*^m\right| > 
\eps \alpha_*^m.
\end{equation}
We will show that $A$ is not $(\delta,d',r)$-algebraically spread, reaching a contradiction.
 We divide the proof into two stages. In the first part, we find some large subsets $S, T$ such that 
\[\Ex_{\bfx\in  S,\bfy \in T} A'(\bfx+\bfy)\geq (1+100\delta)\alpha_*.\] 
In the second stage, we obtain a Bohr set ${B}^*$ of rank $d+d'$ and relative density at least $2^{-r}$ such that \[\mu_{{B}^*}(A')\geq (1+\delta)\alpha_*\]
which gives the desired contradiction.

\paragraph{Stage 1: Density increment on a rectangle.} 

For every $(i,j) \in E$, define $G_{ij}  \subseteq B^{(i)} \times B^{(j)}$ as $G_{ij}(x,y) \defeq A'(\lambda_{ij}x+\eta_{ij} y)$ and let  $\alpha_{ij}$ denote the density of $G_{ij}$ in $B^{(i)} \times B^{(j)}$.

\begin{claim}
\label{claim:regular_densities}
For any $(i,j)\in E$ and every $x \in B^{(i)}$, we have 
\begin{equation}
\label{eq:almost_regular_bohr}
 |\Ex_{\bfy \in B^{(j)}} G_{ij}(x, \bfy) -\alpha_*| \le 2\gamma \alpha_*.
\end{equation}
In particular,      
\begin{equation*}
       \left|\alpha_{ij}- \alpha_*\right|\leq 2\gamma \alpha_*, 
\end{equation*}
and 
\begin{equation}\label{eq:alphaProductsClose}
\left|\prod_{(i,j)\in E} \alpha_{ij}-\alpha_*^m\right|= O(m \gamma \alpha_*^m).
\end{equation}
\end{claim}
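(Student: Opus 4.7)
The plan is to reduce the claim to the relative density of $A'$ on the Bohr set $\eta_{ij}\cdot B^{(j)}$, which is controlled by property {\bf (P6)}, and then to absorb the effect of translating by $\lambda_{ij}x$ using the regularity lemma for Bohr sets (\Cref{lem:BohrSetSumUniform}). A key observation is that by {\bf (P4)}, for every $x\in B^{(i)}$ the shift $\lambda_{ij}x$ lies inside $(\eta_{ij}\cdot B^{(j)})_\rho$, so translation by it barely perturbs the uniform measure on $\eta_{ij}\cdot B^{(j)}$.

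More concretely, given $x \in B^{(i)}$ fix $(i,j)\in E$, and perform the change of variable $\bfz = \eta_{ij}\bfy$ in the average. Since $(\eta_{ij},|G|)=1$, when $\bfy$ is uniform on $B^{(j)}$, the variable $\bfz$ is uniform on the regular Bohr set $B' \defeq \eta_{ij}\cdot B^{(j)}$ (see \eqref{eq:Bohr_multiply} and {\bf (P1)}). Hence
\[
\Ex_{\bfy \in B^{(j)}} G_{ij}(x,\bfy) \;=\; \Ex_{\bfz \in B'} A'(\bfz + \lambda_{ij}x) \;=\; \mu_{B'}\bigl(A' - \lambda_{ij}x\bigr).
\]
By {\bf (P4)}, $\lambda_{ij}x \in B'_\rho$, so \Cref{lem:BohrSetSumUniform} applied to $B'$ (regular of rank $d$) gives
\[
\bigl|\mu_{B'}(A' - \lambda_{ij}x) - \mu_{B'}(A')\bigr| \;\le\; 200\rho d.
\]
On the other hand, $B' \in \cB$, so \eqref{eq:uniformOnBohrFamily} yields $|\mu_{B'}(A') - \alpha_*| \le \gamma \alpha_*$. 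Combining these two bounds and recalling from \eqref{eq:rho} that $\rho \le \frac{\gamma \exp(-\gamma^{-1}\log^2(2/\alpha))}{10^3 d K^{k+1}}$, which is much smaller than $\gamma\alpha/d$ (so $200\rho d \le \gamma\alpha \le \gamma\alpha_*$), we conclude
\[
\bigl|\Ex_{\bfy \in B^{(j)}} G_{ij}(x,\bfy) - \alpha_*\bigr| \;\le\; \gamma\alpha_* + 200\rho d \;\le\; 2\gamma\alpha_*,
\]
which proves \eqref{eq:almost_regular_bohr}.

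Averaging \eqref{eq:almost_regular_bohr} over $x \in B^{(i)}$ immediately gives $|\alpha_{ij} - \alpha_*| \le 2\gamma\alpha_*$. Finally, for the product estimate \eqref{eq:alphaProductsClose}, we write $\alpha_{ij} = \alpha_*(1+\eta_{ij})$ with $|\eta_{ij}| \le 2\gamma$ and expand:
\[
\Bigl|\prod_{(i,j)\in E} \alpha_{ij} - \alpha_*^m\Bigr| \;=\; \alpha_*^m \Bigl|\prod_{(i,j)\in E}(1+\eta_{ij}) - 1\Bigr| \;\le\; \alpha_*^m \bigl((1+2\gamma)^m - 1\bigr) \;=\; O(m\gamma\, \alpha_*^m),
\]
using $2\gamma m \le 1$ (which we may assume since $\gamma = \Omega_{m,\eps}(1)$ is to be chosen small). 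No step here is a real obstacle; the mild technicality is ensuring that $\rho$ is indeed small enough relative to $\gamma\alpha/d$, which is transparent from the definition in \eqref{eq:rho}.
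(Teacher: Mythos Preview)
Your proof is correct and follows essentially the same route as the paper: both use the change of variable $\bfz=\eta_{ij}\bfy$ to rewrite the average as a density on $B'=\eta_{ij}\cdot B^{(j)}$, invoke {\bf (P4)} to place $\lambda_{ij}x$ in $B'_\rho$, apply \Cref{lem:BohrSetSumUniform} (paired with $A'$) for the shift error $200\rho d\le\gamma\alpha_*$, and then combine with {\bf (P6)} to get $2\gamma\alpha_*$; the averaging and product steps are identical. One cosmetic remark: you reuse the symbol $\eta_{ij}$ for the relative error $\alpha_{ij}=\alpha_*(1+\eta_{ij})$, clashing with the linear-form coefficient---rename it to avoid confusion.
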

\begin{proof}
Denote $B' = \eta_{ij} \cdot B^{(j)}$ and recall that by {\bf (P4)}, $\lambda_{ij} x \in \lambda_{ij} \cdot B^{(i)} \subseteq B'_\rho$. Therefore, by \Cref{lem:BohrSetSumUniform} and the choice of $\rho$, we have
\[\norm{\varphi_{B'+\lambda_{ij}x} -\varphi_{B'}}_1 \leq 200 \rho d \le \gamma \alpha.\]
In particular, 
\[ |\Ex_{\bfy \in B'} A'(\lambda_{ij}x+ \bfy) -\mu_{B'}(A')| =  |\inp{\varphi_{B'+\lambda_{ij}x} -\varphi_{B'}}{A'}|  \le \gamma \alpha.\]
By \eqref{eq:uniformOnBohrFamily}, we have $|\mu_{B'}(A')-\alpha_*| \le \gamma \alpha_*$, and therefore, we can conclude 
\[ |\Ex_{\bfy \in B^{(j)}} G_{ij}(x, \bfy) -\alpha_*|= |\Ex_{\bfy \in B'} A'(\lambda_{ij}x+ \bfy) -\alpha_*| \le 2\gamma \alpha_*,\]
which verifies \eqref{eq:almost_regular_bohr}. Averaging this inequality over $x \in B^{(i)}$ yields $|\alpha_{ij}-\alpha_*| \le 2 \gamma \alpha_*$. Finally, \eqref{eq:alphaProductsClose} follows from 
\[(1-2\gamma)^m \alpha_*^m \le \prod_{(i,j)\in E} \alpha_{ij} \le (1+2\gamma)^m \alpha_*^m. \qedhere\] 
\end{proof}
 
By \eqref{eq:LinearSysCountAlgCountLemma_contra}  and \eqref{eq:alphaProductsClose}, if  $\gamma = O(\eps/m)$ is sufficiently small, we get 
\[\left|\Ex_{\bfx_1\in B^{(1)},\ldots,\bfx_k\in B^{(k)}}  \prod_{(i,j) \in E} G_{ij}(\bfx_i,\bfx_j) - \prod_{(i,j) \in E}\alpha_{ij}\right| \ge \frac{\eps}{2} \prod_{(i,j) \in E}\alpha_{ij}.\]
Now, we are ready to apply our graph counting lemma (i.e., \cref{thm:main_graph_count}). There exists a constant $\delta'=\delta'(\eps,m)>0$ such that for some $(i,j) \in E$, 
at least one of the following cases holds.
\begin{enumerate}
\item[(i)] There exist $S' \subseteq B^{(i)}$ and $T' \subseteq B^{(j)}$  with  $|S'||T'| \ge 2^{-O_{m,\eps}(\log^2(2/\alpha))} |B^{(i)}||B^{(j)}|$ such that  
\begin{equation}
\label{eq:first_alternative_algebraic_counting}
\Ex_{\substack{(\bfx,\bfy) \in S'\times T'}}[G_{ij}(\bfx,\bfy)] \ge (1+\delta') \alpha_{ij}; 
\end{equation}
\item[(ii)] There exists $S \subseteq B^{(i)}$ of density at least $2^{-O_{m,\eps}(\log(2/\alpha))}$ such that  
\begin{equation*}     
    \Ex_{\bfx \in B^{(j)}}[G_{ij}(z,\bfx)] \le (1-\delta') \alpha_{ij} \qquad \forall z \in S;
\end{equation*}
\end{enumerate}
If $\gamma=O(\delta')$ is sufficiently small, by \Cref{claim:regular_densities}, for every $z \in B^{(i)}$, we have 
\[|\Ex_{\bfx \in B^{(j)}}[G_{ij}(z,\bfx)] - \alpha_{ij}|< \delta' \alpha_{ij},\]
and (ii) cannot hold. Therefore, (i) must be true. In particular, if $\gamma\leq \delta'/8$, 
\begin{equation}
\label{eq:rect_increase_density}
\Ex_{\substack{(\bfx,\bfy) \in S'\times T'}}[G_{ij}(\bfx,\bfy)] \ge (1+\delta') \alpha_{ij} \ge (1+\delta')(1-2\gamma) \alpha_* \ge \left(1+\frac{\delta'}{2}\right) \alpha_*; 
\end{equation}
Denoting 
\[S \defeq \lambda_{ij} \cdot S', \ \ T \defeq \eta_{ij} \cdot T',\ \  B' \defeq \eta_{ij}  \cdot B^{(j)}, \ \ B'' \defeq \lambda_{ij}  \cdot B^{(i)},\]
we have the following: 
\begin{itemize}
\item  $B',B'' \subseteq B$ are regular Bohr sets of rank $d$ that by {\bf (P5)} satisfy 
\[|B'|,|B''| \ge 2^{-O_{\cL,\eps}(d \log^2(2/\alpha))}|B|.\] 
Moreover, by {\bf (P4)}, we have  
$B'' \subseteq B'_\rho$ and $B' \subseteq B_{\rho}$.

\item  By {\bf (P6)}, we have  $|\mu_{B'}(A')-\alpha_*|\leq \gamma \alpha_*$ and  $|\mu_{B''}(A')-\alpha_*|\leq \gamma \alpha_*$.

\item  $T \subseteq B'$ and $S \subseteq B''$ and they satisfy  
\begin{equation}
    \label{eq:densityIncrementOnRectangle}
    \Ex_{\bfs\in S,\bft \in T} A'(\bfs+\bft)\geq \left(1+\frac{\delta'}{2}\right)\alpha_*,
\end{equation}
with $\delta'=\Omega_{m,\eps}(1)$ and $\mu_{B''}(S),\mu_{B'}(T) \ge   2^{-O_{m,\eps}(\log^2(2/\alpha))}$.   
\end{itemize}

\paragraph{Stage 2: Density increment on a Bohr set.}  
We can rewrite \eqref{eq:densityIncrementOnRectangle}
as
\begin{equation}
\label{eq:pre_change_of_variables}
\Ex_{\bfb'' \in B'',\bfb' \in B'} A'(\bfb''+\bfb')S(\bfb'')T(\bfb')\geq \left(1+\frac{\delta'}{2}\right)\alpha_*\cdot \mu_{B''}(S)\cdot  \mu_{B'}(T) \ge  2^{-O_{m,\eps}(\log^2(2/\alpha))}.
\end{equation}
We wish to apply a change of variables and then apply H\"older's inequality to eliminate the set $S$. Since the averages are over Bohr sets rather than the entire group, we will incur some small errors as a result of the change of variables. By  \Cref{lem:BohrSetSumUniform} and our choice of $\rho$ in \eqref{eq:rho}, if $\gamma=\Omega_{\eps,m}(1)$ sufficiently small, for every $z \in B_\rho'' \subseteq B'_\rho$,  
\begin{multline*}
\left|\Ex_{\bfb'' \in B''-z} \Ex_{\bfb' \in B'+z} A'(\bfb''+\bfb')S(\bfb'')T(\bfb')- \Ex_{\bfb'' \in B'',\bfb' \in B'} A'(\bfb''+\bfb')S(\bfb'')T(\bfb')\right|  \\ \le 400 \rho d \leq \gamma  \alpha \cdot \mu_{B''}(S)\cdot  \mu_{B'}(T).
\end{multline*}
By averaging over $z \in B_{\rho}''$ and combining it with \eqref{eq:pre_change_of_variables}, if $\gamma\leq \delta'/8$, we have 
\begin{equation}
\label{eq:preHolder}
\Ex_{\bfz \in B''_\rho,\bfb'' \in B'',\bfb' \in B'} A'(\bfb''+\bfb')S(\bfb''-\bfz)T(\bfz+\bfb') \ge \left(1+ \frac{\delta'}{4}\right)\alpha_* \cdot \mu_{B''}(S)\cdot  \mu_{B'}(T).
\end{equation}
Next, we will apply H\"older's inequality to remove $S$. For any $p \ge 2$ and $q \in (1,2]$ with $\frac{1}{p}+\frac{1}{q}=1$, we have
\begin{multline*}
\Ex_{\bfz \in B''_\rho,\bfb'' \in B'',\bfb' \in B'} A'(\bfb''+\bfb')S(\bfb''-\bfz)T(\bfz+\bfb') \le \\ \left(\Ex_{\bfz \in B''_\rho, \bfb'' \in B''} \left(\Ex_{\bfb' \in B'} A'(\bfb''+\bfb')T(\bfz+\bfb')\right)^p \right)^{1/p} \left(\Ex_{\bfz \in B''_\rho,\bfb'' \in B''} 
S(\bfb''-\bfz)\right)^{1/q}.
\end{multline*}
By \Cref{lem:BohrSetSumUniform}, we have 
\[|\Ex_{\bfz \in B''_\rho,\bfb'' \in B''} 
S(\bfb''-\bfz)  - \mu_{B''}(S)| \le 200d\rho \leq \gamma\mu_{B''}(S),  \] 
which with \eqref{eq:preHolder} gives us 
\[\left(\Ex_{\bfz \in B''_\rho, \bfb'' \in B''} \left(\Ex_{\bfb' \in B'} A'(\bfb''+\bfb')T(\bfz+\bfb')\right)^p \right)^{1/p} 
 \left(1+ \gamma \right)^{1/q} \ge \left(1+\frac{\delta'}{4}\right)\alpha_* \cdot \mu_{B'}(T)\cdot  \mu_{B''}(S)^{1-\frac{1}{q}}.\]
 By \eqref{eq:uniformOnBohrFamily}, we have  $\alpha_*>(1-\gamma)\mu_{B'}(A')$. If $\gamma \leq \delta'/100$, we have
 \[\Ex_{\bfz \in B''_\rho, \bfb'' \in B''} \left(\Ex_{\bfb' \in B'} A'(\bfb''+\bfb')T(\bfz+\bfb')\right)^p  
  \ge \left(1+\frac{\delta'}{8}\right)^p \mu_{B'}(A')^p \mu_{B'}(T)^p\cdot  \mu_{B''}(S).\]
 Taking  $p = O(\log(2\mu_{B''}(S)^{-1})/\delta')=O(\log^2(2/\alpha))$ to be sufficiently large, we obtain that
\begin{equation}
\label{eq:mixed_grid_norm}
 \Ex_{\bfz \in B''_\rho, \bfb'' \in B''} \left(\Ex_{\bfb' \in B'} A'(\bfb''+\bfb')T(\bfz+\bfb')\right)^p     \ge \left(1+\frac{\delta'}{16} \right)^p \mu_{B'}(A')^p   \mu_{B'}(T)^p.
\end{equation}

Now, we are in the position to apply \Cref{thm:dependent} with $\cX_1 = B''_\rho$, $\cX_2=B''$, and $\cY=B'$. Therefore, there exists  sets $A_1 \subseteq B''_\rho$ and $A_2 \subseteq B''$, and $\delta''=\frac{\delta'}{32}$, such that 
\begin{equation}
\label{eq:densities_A1_A2}
\mu_{B_\rho''}(A_1)\mu_{B''}(A_2) \ge \mu_{B'}(A')^p \mu_{B'}(T)^p=2^{-O_{\eps,m}(p\log(2/\alpha) + p\log^2(2/\alpha))}=2^{-O_{\eps,m}(\log^4(2/\alpha))}.
\end{equation}
and
\begin{equation}
\label{eq:after_dependent}
\Pr_{(\bfx_1,\bfx_2) \in A_1 \times A_2}\left[ \Ex_{\bfz \in B'}A'(\bfx_1+\bfz)T(\bfx_2+\bfz) >\left(1+ \delta'' \right) \mu_{B'}(A') \mu_{B'}(T)\right] \ge 1 - \frac{\delta''}{100}. 
\end{equation}
Note that $(x_1,x_2) \in A_1 \times A_2$ implies $x_2 \in B'' \subseteq B'_{\rho}$ where the latter inclusion is by {\bf (P4)}.  Therefore, by \Cref{lem:BohrSetSumUniform},  assuming $\gamma \leq \delta''/10$,  for every $(x_1,x_2) \in A_1 \times A_2$, we have 
\begin{equation}
\label{eq:after_dependent_shift}
|\Ex_{\bfz \in B'}A'(x_1+\bfz)T(x_2+\bfz)-\Ex_{\bfz \in B'}A'(x_1-x_2+\bfz)T(\bfz)| \le 200 \rho d  \le \frac{\delta''}{10} \mu_{B'}(A') \mu_{B'}(T). 
\end{equation}
Let 
\[ \Gamma = \{b \in B'' : \Ex_{\bfz \in B'}A'(b+\bfz)T(\bfz) \ge \left(1+0.9\delta''\right) \mu_{B'}(A') \mu_{B'}(T) \}.\]
 By \eqref{eq:after_dependent} and \eqref{eq:after_dependent_shift}, we have 
\[\Ex_{(\bfx_1,\bfx_2) \in A_1 \times A_2}\left[\bfx_1-\bfx_2 \in \Gamma \right] \ge 1 - \frac{\delta''}{100}. \]
By \Cref{lem:BohrRegular}, we may assume that $B'''\coloneqq B''_\rho$ is regular. 
We invoke \Cref{thm:almost_periodicity} with $2\eps = 0.9\delta''$, $\tau=2^{-O_{\eps,m}(\log^2(2/\alpha))}$, and $A',T\subseteq B',A_1\subseteq B'''$ and $A_2\subseteq B''$. Note that $|\Gamma|\leq |B''|$, and  we have  
\begin{itemize}
\item $\mu_{B'}(A')\geq (1-\gamma)\alpha \ge \tau$;
\item $\mu_{B'}(T)\geq 2^{-O_{\eps,m}(\log^2(2/\alpha))} \ge \tau$;
\item $\alpha_1\defeq \mu_{B'''}(A_1)\geq 2^{-O_{\eps,m}(\log^4(2/\alpha))}$ and $\alpha_2 \defeq \mu_{B''}(A_2)\geq 2^{-O_{\eps,m}(\log^4(2/\alpha))}$  by \eqref{eq:densities_A1_A2}.
\end{itemize}    
 We obtain a regular Bohr set $B^*$ and $x\in G$ such that 
\[\frac{|(A'+x)\cap B^*|}{|B^*|}\geq \left(1+\frac{\delta''}{10}\right)\frac{|A'|}{|B'|}\geq \left(1+\frac{\delta''}{10}\right)\left(1-\gamma\right)\frac{|A|}{|B|}\geq \left(1+\frac{\delta''}{20}\right)\frac{|A|}{|B|}.\]
The last inequality is by assuming $\gamma\leq \delta''/40$. Moreover, 
\begin{align*}
\rank(B^*)&\leq \rank(B')+O_{\eps}(\log^2(2\tau^{-1})\log(2\alpha_1^{-1})\log(2\alpha_2^{-1}))\\
&\leq \rank(B')+O_{\eps}\left(\log^{12}(2\alpha^{-1})\right)
\end{align*}
and \[|B^*|\geq 2^{-O_{m,\eps}(r)} |B'''|,\]
where
\[C = \log(2/\tau)+\log(\rank(B'))+\log\log(2/\alpha_1)+\log\log(2/\alpha_2)=O(\log(2/\tau))=O(\log^2(2/\alpha))\]
and
\begin{align*}
r&\leq  
C\cdot
(
\rank(B')+\log^2(2\tau^{-1})\log(2\alpha_1^{-1})\log(2\alpha_2^{-1})
)\\
&\leq
C\cdot
\left(\rank(B') + \log^{12}(2\alpha^{-1})\right)\\
&\leq O\left( \log^2(2\alpha^{-1})d + \log^{14}\left(2\alpha^{-1}\right)\right).
\end{align*}
Combining $|B^*|\geq 2^{-O_{m,\eps}(r)} |B'''|$ with 
\[|B'''|=|B''_\rho| \geq \left(\frac{\rho}{4}\right)^d |B''|\geq \exp\left(-O_{m,\eps}(d\log^2{(2\alpha^{-1})})\right)|B|,\]
we have 
\[|B^*| \ge \exp\left(- O_{m,\eps}\left( d\log^2(2\alpha^{-1}) + \log^{14}(2\alpha^{-1})\right)\right)|B|.\]

By taking $\gamma = \Omega_{m,\eps}(1)$  to be a small enough constant, we can satisfy all the requirements on $\gamma$.
Taking $\delta=\min\left(\frac{\delta''}{20},\frac{\gamma}{10m}\right)$ finishes the proof of \Cref{theorem:AlgCountingGeneral}.
\subsection{Proof of \texorpdfstring{\Cref{thm:L-freeAbelian}}{Theorem \ref{thm:L-freeAbelian}} (i)}

Let $\cL = \{L_1,\ldots,L_m\}$ and $\alpha=\mu(A)$. Suppose $d' =  O_{\cL}(\log^{12}(2/\alpha))$ and $r=O_{\cL}(\log^{15}(2/\alpha))$ are sufficiently large. 
Let $\delta = \delta(\eps, \cL)>0$ given by \Cref{theorem:AlgCountingGeneral}  with $\eps = 1/2$. We wish to obtain an algebraically spread set $A_*$ to apply \Cref{theorem:AlgCountingGeneral}. If  $A$ is $(\delta,d',r)$-algebraically spread then $A_*=A$, otherwise, from  \Cref{def:bohralg_spread} (where $B=G$),  we find a regular Bohr set $B^{[1]}\subseteq G$ of rank $d'$ and density at least $2^{-r}$ and an $x_1\in G$ such that $A_1 \coloneqq (A+x_1)\cap B^{[1]}$ satisfies $\mu_{B^{[1]}}(A_1)\geq (1+\delta)\alpha$. We can repeat this density increment process at most $O_\delta(\log(2/\alpha))$ times and obtain a regular Bohr set $B^*$ and $x_* \in G$ such that $A_*\coloneqq (A+x_*)\cap B^*$ is $(\delta,d',r)$-algebraically spread in $B^*$.
At this point, we have 
\[\rank(B^*)\leq O_{\cL}(d'\log(2/\alpha)) =  O_{\cL}(\log^{13}(2/\alpha))\] 
and for 
\[\mu(B^*)\geq \exp\left(- O_{\cL}(\log^{16}(2/\alpha))\right).\]
Note $\alpha_*\defeq \mu_{B^*}(A_*)\geq\alpha$.
Since $r = O_\delta(\rank(B^*)\log^{2}(2/\alpha)  + \log^{14}(2/\alpha))=O_{\cL}(\log^{15}(2/\alpha))$ is sufficiently large, by \Cref{theorem:AlgCountingGeneral}, there are Bohr sets $B^{(1)},\cdots, B^{(k)}$ such that 
\begin{itemize}
\item For every $i \in [k]$,  we have 
\begin{equation}\label{eq:bohrlowerdensity}
    \mu(B^{(i)}) \ge 2^{-O_{\cL}( \log^{15}(2/\alpha))}\mu(B^*) \ge 2^{-O_{\cL}( \log^{16}(2/\alpha))}.
\end{equation}
\item We have 
\begin{equation}\label{eq:lowerallprob}
    \Pr_{\bfx_1\in B^{(1)},\ldots,\bfx_k\in B^{(k)}}[\cL(\bfx_1,\ldots,\bfx_k)\in A_*^m]\geq  \alpha^m/2.
\end{equation}
\end{itemize}
Note that $x\mapsto\lambda_{ij}x$ and $x\mapsto\eta_{ij}x$  are group automorphisms.  Hence, given any two distinct linear forms $L,L'\in \cL$, we have
\begin{equation}
    \Pr_{\bfx_1\in B^{(1)},\ldots,\bfx_k\in B^{(k)}}[L(\bfx_1,\ldots,\bfx_k)=L'(\bfx_1,\ldots,\bfx_k)]\leq \frac{1}{\min_{i}|B^{(i)}|}.
\end{equation}
This implies that 
\begin{equation}\label{eq:upperdegenerate}
    \Pr_{\bfx_1\in B^{(1)},\ldots,\bfx_k\in B^{(k)}}[\cL(\bfx_1,\ldots,\bfx_k) \text{ is degenerate}]\leq \frac{m^2}{\min_{i}|B^{(i)}|}.
\end{equation}
Since we have assumed that $A$ does not contain non-degenerate instances of $\cL$, by combining \eqref{eq:bohrlowerdensity}, \eqref{eq:lowerallprob}, and \eqref{eq:upperdegenerate}, we get    
\begin{equation}\label{eq:finalinequality}
    \frac{\alpha^m}{2}\leq \frac{m^2}{\min_{i}|B^{(i)}|} \leq m^2\cdot|G|^{-1}\cdot \exp(O_{\cL}(\log^{16}(2/\alpha))).
\end{equation}
If $\alpha \geq \exp\left(- O_{\cL}(\log^{1/16}(|G|))\right)$, we can ensure the right hand side in \eqref{eq:finalinequality} is at most, say, $|G|^{-1/2}$, which violates \eqref{eq:finalinequality} if $|G|= \Omega_{\cL}(1)$ is large enough. This is a contradiction.
\qed
 
\subsection{Proof of \texorpdfstring{\Cref{thm:L-freeAbelian}}{Theorem \ref{thm:L-freeAbelian}} (ii)}
 
Recall that we want to leverage the assumption  that the underlying graph of $\cL$ is $2$-degenerate to conclude the stronger bound of 
\[|A| \le |G|\cdot 2^{-\Omega_{\cL}(\log^{1/9}|G|)}. \]

We will describe how the proof of \Cref{theorem:AlgCountingGeneral} can be modified to yield a stronger density increment. We change the definition of $\rho$ in \eqref{eq:rho} to the following, shaving off a factor of $\log(2/\alpha)$.   
\begin{equation}
\label{eq:rho_2deg}
K \defeq \prod_{(i,j) \in E} |\eta_{ij}||\lambda_{ij}| \ \ \text{and} \ \ \rho \defeq \frac{\gamma \exp(-\gamma^{-1} \log(2/\alpha)) }{10^3 d K^{k+1}}.
\end{equation}

Let $H$ be the underlying graph of $\cL$. 
Since $H$ is $2$-degenerate, there exists an ordering of the vertices such that every vertex has at most $2$ preceding neighbours. We then orient each edge of $H$ from the smaller to the larger vertex in this ordering. 
Note that the indegree of every vertex is at most $2$. Therefore, if one applies \Cref{lem:MainTechnical} with this graph $H$, one would obtain  
\[ \norm{G_{ij}}_{U(2,p)} \geq (1+\delta')\alpha_{ij},\]
for $p=O_{m,\eps} (\log \frac{2}{\alpha})$ and  $\delta'=\eps^22^{-4m-10}$. Recalling the definitions of $B'$ and $B''$, we can express this as,
\begin{equation}
\label{eq:Updated}
\Ex_{\bfx \in B'', \bfy \in B''} \Ex_{\bfz \in B'}\left(A'(\bfx+\bfz)A'(\bfy+\bfz)\right)^p \ge  (1+\delta')^{2p}\alpha_{ij}^{2p}. 
\end{equation}
Note that this allows us to bypass obtaining the large rectangle $S \times T$ and directly jump to  \eqref{eq:mixed_grid_norm} with $T=A'$, where we also use $\alpha_{ij}\geq (1-O(\gamma))\mu_{B'}(A')$.  Furthermore, we are in a better situation in terms of the parameters: Firstly, $p=O_{m,\eps} (\log(2/\alpha))$ instead of $p=O_{m,\eps} (\log^2(2/\alpha))$. Secondly, in \eqref{eq:Updated}, we have $\mu_{B'}(A')^{2p}=2^{-O(\log^2(2/\alpha))}$ instead of $\mu_{B'}(A')^p \mu_{B'}(T)^p=  2^{-O_{m,\eps}(p\log(2/\alpha)+p\log^2(2/\alpha))}=2^{-O(\log^4(2/\alpha))}$ in \eqref{eq:mixed_grid_norm}.  Note also that the reason we could choose a larger $\rho$ is that in \eqref{eq:after_dependent_shift},  we now only need  $200\rho d$ to be sufficiently small compared to $\frac{\delta''}{10} \mu_{B'}(A') \mu_{B'}(A')=\Omega(\alpha^2)$. 
Therefore, when we apply \Cref{thm:dependent}, we obtain $A_1,A_2 \subseteq B''$ with 
\[\mu_{B''}(A_1),\mu_{B''}(A_2) \ge 2^{-O_{m,\eps}(\log^2(2/\alpha))}.\]  
Now, we invoke \Cref{thm:almost_periodicity} with $U_1=U_2 \defeq A'\subseteq B'$ and $A_1, A_2\subseteq B''$, and obtain $B'''$ and $x_0 \in G$ with $(A+x_0)\cap B'''\geq (1+\delta)\alpha$, such that
\begin{align*}
\rank(B{'''})&\leq \rank(B')+O_{m,\eps}(\log^2(2\alpha^{-1})\log(2\alpha_1^{-1})\log(2\alpha_2^{-1}))\\
&\leq \rank(B')+O_{m,\eps}\left(\log^{6}(2\alpha^{-1})\right)
\end{align*}
and \[|B'''|\geq 2^{-O_{m,\eps}(r)} |B''|,\]
where
\[C = \log(2/\alpha)+\log(\rank(B'))+\log\log(2/\alpha_1)+\log\log(2/\alpha_2)=O(\log(2/\alpha))\]
and
\begin{align*}
r&\leq  
C\cdot
(
\rank(B')+\log^2(2\alpha^{-1})\log(2\alpha_1^{-1})\log(2\alpha_2^{-1})
)\\
&\leq
C\cdot
\left(\rank(B') + \log^{6}(2\alpha^{-1})\right)\\
&\leq O\left( \log(2\alpha^{-1})d + \log^{7}\left(2\alpha^{-1}\right)\right).
\end{align*}

By an iterative density increment argument, similar to \Cref{theorem:AlgCountingGeneral}, we obtain a Bohr set $B^*$ and $x_* \in G$ such that $A_*\coloneqq (A+x_*)\cap B^*$ is $(\delta',d',r)$-algebraically spread in $B^*$ where $d' =  O_m(\log^{6}(2/\alpha))$ and $r=O_m(\log^{8}(2/\alpha))$,  $\delta' = \delta(\eps,m)$, and  $\alpha_*\coloneqq \mu_{B^*}(A_*)\geq\alpha$. We have 
     \[\rank(B^*)\leq O_{m,\eps}(\log^{7}(2/\alpha))\] 
    and \[\mu(B^*)\geq \exp\left(- O_{m,\eps}(\log^{9}(2/\alpha))\right).\]
  A similar argument as in the proof of \Cref{thm:L-freeAbelian}~(i) concludes the proof.
\qed

\section{Concluding remarks: resilience to spreadness}\label{section:concluding}  
In this section, we discuss negative results and argue that, essentially, the approach based on the graph and algebraic spreadness does not apply to most systems of linear forms.  For simplicity, we focus on the case of the group $\mathbb{F}^n$ where $\mathbb{F} =\mathbb{F}_q$ is a finite field, and $q$ is a fixed prime. The following notion of pseudo-randomness, based on graph spreadness, is implicit in the proof of \cref{thm:L-freeAbelian}.  

\begin{definition}[Spreadness~\cite{KM23,KLM}] 
\label{def:alg_spread}
Let $r \ge 1$ and $\delta >0$. A set $A \subseteq \F^n$ of density $\alpha$ is called
$(\delta,r)$-\emph{combinatorially spread} if, for all subsets $S$ and $T$ of density at least $2^{-r}$, we have
\[\Ex_{\substack{\bfx \sim S \\ \bfy \sim T}} [A(\bfx+\bfy)] \le (1+\delta) \alpha.\]
\end{definition}

In the literature, two main notions are commonly used to characterize the complexity of systems of linear forms.  The first notion is called the \emph{Cauchy-Schwarz complexity} (\emph{CS-complexity} for short), defined by Green and  Tao~\cite{green2010linear} in their study of solutions of linear equations in the primes. The second notion, defined by Gowers and Wolf~\cite{MR2773103,MR2578471},  is called the \emph{true complexity}. In both definitions, the complexity of a system is a positive integer. The following inclusions follow from the definitions of these complexity measures.  
\[ \set{\cL \ : \ \text{$\cL$ is binary} } \subseteq \set{\cL \ : \ \text{$\cL$ has CS-complexity $1$}}   \subseteq \set{\cL \ : \ \text{$\cL$ has true complexity $1$}}.   \]

Our sparse graph counting lemma shows that for binary systems of linear forms, combinatorial spreadness implies  $t_{\cL}(A) \approx   \alpha^m$. On the other hand, we will show in \cref{thm:true_complexity} that if the true complexity of a linear system $\cL$ is larger than $1$, then there are sets $A$ with density $\alpha$ that are combinatorially spread with extremely good parameters, and yet $t_{\cL}(A) > (1+\Omega(1)) \alpha^m$.

\paragraph{True complexity and CS-complexity.} Gowers, in his proof of Szemer\'edi's theorem, introduced a hierarchy of increasingly stronger notions of pseudo-randomness based on the so-called Gowers uniformity norms $\norm{\cdot}_{U^{k}}$ for $k \ge 1$. He used iterated applications of the classical Cauchy--Schwarz inequality to prove    
\[ \left|\Ex_{\bfx,\bfy}\left[ A(\bfx)A(\bfx+\bfy)\cdots A(\bfx+(k-1)\bfy)\right] -\alpha^k\right| \le k \norm{A-\alpha}_{U^{k-1}}, \]
showing that the density of $k$-progressions in $A$ is controlled by the pseudo-randomness condition $\norm{A-\alpha}_{U^{k-1}}=o(1)$.

Green and Tao~\cite{green2010linear} determined the most general class of systems of linear forms that can be handled by Gowers' iterated Cauchy--Schwarz argument. 
\begin{definition}[The Cauchy--Schwarz Complexity]\label{def:cscomplexity}
Let $\cL = \{L_1,\ldots, L_m\}$ be a system of linear forms. The \emph{Cauchy--Schwarz complexity} of $\cL$ is the minimal $k$ such that the following holds. For every
$1 \le i \le m$, we can partition  $\{L_j\}_{j \in [m] \setminus \{i\}}$ into $k+1$ subsets, such that $L_i$ does not belong to the
linear span of any of these subsets.
\end{definition}
In particular, Green and Tao~\cite{green2010linear} showed that if the Cauchy--Schwarz complexity of $\cL$ is $k$, then 
\begin{equation}
\label{eq:CS-complexity}
\left|t_{\cL}(A) -\alpha^m\right| \le m \norm{A-\alpha}_{U^{k+1}}.
\end{equation}

\begin{example}
The system of linear forms $(x,x+y,\ldots,x+(k-1)y)$, which represents $k$-term arithmetic progressions, has CS-complexity $k-2$. It is also straightforward to verify that binary systems of linear forms have CS-complexity $1$. 
\end{example}

Later, in a series of articles~\cite{MR2773103,MR2578471}, Gowers and Wolf initiated a systematic study of classifying the systems of linear forms that are controlled by the $k$-th Gowers uniformity norm. They defined the \emph{true complexity} of a system of linear forms $L=(L_1,\ldots,L_m)$ as the smallest $k$ such that the pseudo-randomness condition of $\norm{A-\alpha}_{U^{k+1}} =o(1)$  implies $|t_{\cL}(A)-\alpha^m|=o(1)$. 

By \eqref{eq:CS-complexity}, the true complexity is at most the CS-complexity. 
Gowers and Wolf~\cite{MR2773103} fully characterized the true complexity of systems of linear forms, provided that the field size is not too small.  Let $L=\lambda_1 x_1+\cdots+\lambda_d x_d$ be a linear form in $d$ variables. We identify $L \equiv (\lambda_1,\ldots,\lambda_d) \in \F^d$, and define the $k$-th tensor power of $L$ by
\[ L^k = \left(\prod_{j=1}^k \lambda_{i_j} : i_1,\ldots,i_k \in [d] \right) \in \F^{d^k}. \]

\begin{theorem}[\cite{MR2773103}]
Let $\cL=(L_1,\ldots, L_m)$ be a system of linear forms of CS-complexity at most $|\F|$. The \emph{true complexity} of $\cL$ is the least integer $k\ge 1$ such that the forms $L_1^{k+1},\ldots,L_m^{k+1}$ are linearly independent.
\end{theorem}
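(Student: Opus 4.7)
The plan is to establish the two inequalities separately: letting $k^\star$ denote the least integer with $L_1^{k^\star+1}, \ldots, L_m^{k^\star+1}$ linearly independent, I would prove (i) the true complexity is at most $k^\star$ (i.e., $\|f-\alpha\|_{U^{k^\star+1}} = o(1)$ forces $t_\cL(f) = \alpha^m + o(1)$), and (ii) the true complexity is at least $k^\star$ (i.e., there exist $f$ with $\|f-\alpha\|_{U^{k^\star}} = o(1)$ yet $t_\cL(f) - \alpha^m \not\to 0$).

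For (ii), the concrete direction, minimality of $k^\star$ supplies a nontrivial relation $\sum_i c_i L_i^{k^\star} = 0$. Fix a non-principal additive character $\chi$ of $\F$ and a generic homogeneous polynomial $Q \colon \F^n \to \F$ of degree $k^\star$, and set $f(x) = \alpha + \delta\, \chi(Q(x))$ for small $\delta > 0$. Since $|\F|$ exceeds the CS-complexity (hence exceeds $k^\star$), standard Weil/Gauss-sum estimates give $\|\chi \circ Q\|_{U^{k^\star}} = o(1)$, whence $\|f - \alpha\|_{U^{k^\star}} = o(1)$. On the other hand, expanding $\prod_i f(L_i(\vec x))$ in powers of $\delta$ and using the multilinear identity $Q(L_i(\vec x)) = \sum_{\mathbf a} \bigl(\prod_j \lambda_{i,a_j}\bigr) \widetilde Q(x_{a_1},\ldots, x_{a_{k^\star}})$ for the symmetric multilinear extension $\widetilde Q$, the relation $\sum_i c_i L_i^{k^\star} = 0$ annihilates the purely $k^\star$-linear part of $\sum_i c_i Q(L_i(\vec x))$, leaving a polynomial of degree $< k^\star$ whose character sum contributes $\Omega(1)$ rather than $o(1)$. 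This yields a bias $t_\cL(f) - \alpha^m = \Omega(\delta^{|\supp(c)|})$.

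For (i), my strategy is an inductive Gowers--Cauchy--Schwarz argument combined with the Bergelson--Tao--Ziegler inverse theorem for the $U^{k^\star+1}$-norm over $\F^n$, which is available because $|\F|$ is large. Writing $f = \alpha + g$ and telescoping $t_\cL(f) - \alpha^m$ into a sum of terms $\Ex \prod_{i \in S} g(L_i(\vec x))$ over $S \subseteq [m]$, pigeonhole singles out one term of magnitude $\Omega(1)$. Performing $|S| - 1$ Cauchy--Schwarz steps in carefully chosen difference directions reduces this to a lower bound on a Gowers-box norm of $g$, after which the inverse theorem produces $|\Ex g(x)\,\chi(P(x))| = \Omega(1)$ for a polynomial $P$ of degree at most $k^\star$. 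The crux is then an algebraic lemma asserting that linear independence of $\{L_i^{k^\star+1}\}_i$ forces any such polynomial-phase correlation contributing nontrivially to $t_\cL$ to be witnessed already by $\|g\|_{U^{k^\star+1}}$, producing the required contradiction.

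The main obstacle is precisely this algebraic-to-analytic translation, which I would prove by decomposing $P = P_{k^\star} + \cdots + P_0$ homogeneously by degree. For the top part $P_{k^\star}$, its symmetric multilinear extension identifies $\sum_i c_i P_{k^\star}(L_i(\vec x))$ with the value of a linear functional on the span of $\{L_i^{k^\star+1}\}_i$; independence forces this part to contribute zero unless $\|g\|_{U^{k^\star+1}} = \Omega(1)$. Lower-degree parts $P_j$ with $j < k^\star$ are handled by induction on $j$, using that minimality of $k^\star$ makes $L_i^{j+1}$ linearly dependent and allows one to recycle the lower-bound construction to absorb the contribution of each $P_j$. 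The hypothesis $|\F| \ge$ CS-complexity is used throughout both to justify the inverse theorem in the relevant regime and to guarantee that polynomial degree in $\F[x_1,\ldots,x_n]$ faithfully reflects tensor rank, so that the algebraic hypothesis has its intended analytic meaning.
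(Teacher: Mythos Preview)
The paper does not prove this theorem; it is quoted verbatim as a result of Gowers and Wolf~\cite{MR2773103} and used as background for the discussion in \Cref{section:concluding}. There is therefore no ``paper's own proof'' to compare your proposal against.

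That said, a brief assessment of your sketch on its own merits: your direction (ii) is essentially the standard construction, though your expansion argument is garbled. With a single function $f(x)=\alpha+\delta\,\chi(Q(x))$ and a single $\delta$, expanding $\prod_i f(L_i(\vec x))$ gives terms $\delta^{|S|}\prod_{i\in S}\chi(Q(L_i(\vec x)))=\delta^{|S|}\chi\bigl(\sum_{i\in S}Q(L_i(\vec x))\bigr)$, and the linear relation $\sum_i c_i L_i^{k^\star}=0$ does not directly produce a subset $S$ for which this sum vanishes; the coefficients $c_i$ live in $\F$, not in $\{0,1\}$. The usual fix is either to allow different functions $f_i$ (which the true-complexity definition permits) with phases $c_i Q$, or to work with a more carefully designed $f$; your write-up does not address this.

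Your direction (i) is where the real content of the Gowers--Wolf theorem lies, and your sketch is too vague to constitute a proof. The ``algebraic lemma'' you describe---that independence of $\{L_i^{k^\star+1}\}$ forces any polynomial-phase correlation contributing to $t_\cL$ to be witnessed by $\|g\|_{U^{k^\star+1}}$---is essentially a restatement of what you are trying to prove, and the inductive mechanism you propose for handling lower-degree parts $P_j$ (``recycle the lower-bound construction to absorb the contribution'') does not make sense as stated: the lower-bound construction produces examples, it does not absorb error terms. The actual argument in \cite{MR2773103} (and its later simplifications) requires a genuine structural decomposition and is considerably more delicate than a telescoping plus Cauchy--Schwarz plus inverse-theorem outline.
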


It is known that the true complexity of the linear system identifying $4$-progressions is 2~\cite{MR2578471,MR2773103}. Moreover, it is well-known that there is a set $A$ of density $\alpha$ such that $\norm{A-\alpha}_{U^{2}} =o(1)$ (which implies combinatorial spreadness) but the count of $4$-progressions in $A$ is far from what is expected from a random set of density $\alpha$~\cite{MR1844079}. Here, we generalize this result.
We essentially show that if the true complexity of a linear system $\cL$ is larger than $1$, then there are sets $A$ with density $\alpha$ that are combinatorially spread with extremely good parameters, and yet $t_{\cL}(A) \not\approx \alpha^m$.
 
\begin{restatable}{thm}{TrueComplexity}
\label{thm:true_complexity}
Let $\F$ be a finite field of prime order.
Suppose that $\cL=(L_1,\ldots, L_m)$ is a system of linear forms over $d$ variables such that $L_1^2,\ldots,L_m^2$ are linearly dependent. There exists a set $A \subseteq \F^n$ with density $\alpha \ge \frac{1}{3}$ that is $(e^{-\Omega(n)},\Omega(n))$-combinatorially spread,
yet 
\[t_{\cL}(A)> (1+\Omega(1))\alpha^m.  \]
\end{restatable}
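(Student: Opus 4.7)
The plan is to adapt Gowers's classical ``random quadratic phase'' construction. The hypothesis translates to a polynomial identity: there is a nonzero $c\in\F^m$ with $\sum_{i=1}^m c_iL_i(x_1,\dots,x_d)^2\equiv 0$ in $\F[x_1,\dots,x_d]$. Substituting any quadratic form $Q\colon\F^n\to\F$ preserves the identity, yielding $\sum_i c_i Q(L_i(\vec x))=0$ for every $\vec x\in(\F^n)^d$. Fix a nondegenerate $Q$ and a set $S\subseteq\F$ with $|S|/|\F|\ge 1/3$ (chosen in the last step), and set $A\defeq\{x\in\F^n:Q(x)\in S\}$.

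For density and spreadness, standard Gauss-sum / Weil estimates give $\big||Q^{-1}(a)|-|\F|^{n-1}\big|=O(|\F|^{n/2})$ for every $a\in\F$, so $\alpha\defeq|A|/|\F|^n=|S|/|\F|+O(|\F|^{-n/2})\ge 1/3$ once $n$ is large. The same estimates show that every nontrivial Fourier coefficient of $\1_A$ is a sum of $|S|$ twisted Gauss sums on quadrics, and hence has modulus at most $O(|\F|^{-n/2})$. Fourier-expanding $\Ex_{\bfx\in S_0,\bfy\in T_0}[\1_A(\bfx+\bfy)]-\alpha$ and applying Cauchy--Schwarz over nontrivial frequencies converts this into the $(e^{-\Omega(n)},\Omega(n))$-combinatorial spreadness of \Cref{def:alg_spread}.

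For the count, let $Y(\vec x)\defeq(Q(L_i(\vec x)))_{i=1}^m\in\F^m$. For a character $\psi=(t_1,\dots,t_m)$ of $\F^m$, $\Ex\psi(Y(\vec x))=\Ex_{\vec x}\omega^{Q_t(\vec x)}$, where $Q_t$ is the quadratic form on $(\F^n)^d$ whose matrix is $\mathbf{A}(t)\otimes M$, $M$ being the matrix of $Q$ and $\mathbf{A}(t)$ the $d\times d$ matrix representing $\sum_i t_iL_i^2$. Since $M$ is nondegenerate, $\mathrm{rank}(Q_t)=n\cdot\mathrm{rank}(\mathbf{A}(t))$, which vanishes iff $t$ lies in the space of relations $R\defeq\{c'\in\F^m:\sum_i c'_iL_i^2=0\}\supsetneq\{0\}$; for $t\notin R$ the Gauss-sum bound gives $|\Ex\psi(Y)|\le|\F|^{-n/2}$. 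Fourier inversion then shows $Y$ is within $e^{-\Omega(n)}$ in total variation of the uniform distribution on the annihilator $V\defeq R^\perp\subsetneq\F^m$, whence
\[
 t_\cL(A)=\frac{|S^m\cap V|}{|V|}+e^{-\Omega(n)},\qquad \alpha^m=\Big(\tfrac{|S|}{|\F|}\Big)^m+e^{-\Omega(n)}.
\]

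The only case-dependent step is to choose $S$ with $|S|/|\F|\ge 1/3$ and $|S^m\cap V|/|V|\ge(1+\eta)(|S|/|\F|)^m$ for some $\eta=\eta(\cL)>0$. When $|\F|\in\{2,3\}$ take $S=\{0\}$: then $|S^m\cap V|/|V|=|\F|^{-\dim V}=|\F|^{\dim R-m}$ beats $(1/|\F|)^m$ by a factor of $|\F|^{\dim R}\ge 2$. When $|\F|=p\ge 5$ take $S$ to be the symmetric interval $\{-\lfloor p/6\rfloor,\dots,\lfloor p/6\rfloor\}\subseteq\F_p$, of density $\ge 1/3$; for each nonzero $c'\in R$, lifting $s_j\in S$ to integers in $[-p/6,p/6]$ and analysing $\Pr_{s_j\sim S}[\sum_j c'_js_j\equiv 0\pmod p]$ via the Dirichlet kernel of $S$ shows that the low-frequency contribution contributes a positive constant $\eta=\eta(p,m,c')>0$; intersecting over a basis of $R$ propagates the bias to all of $V$. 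This Fourier-kernel verification is the main obstacle and the only point at which the combinatorial data of $\cL$ plays a role; modulo it, combining the preceding three paragraphs completes the proof.
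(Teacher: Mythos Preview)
Your overall architecture matches the paper's: build $A$ from a nondegenerate quadratic form $Q$ on $\F^n$, use Gauss sums to get tiny nontrivial Fourier coefficients (hence spreadness), and use the relation $\sum_i c_i L_i^2=0$ to force a constant bias in $t_\cL(A)$. The divergence is in how the bias is secured. The paper does \emph{not} take $A=Q^{-1}(S)$ for a set $S$; instead it first builds the $[0,1]$-valued function $f=G\circ Q$ where $G\colon\F\to[0,1]$ is chosen so that \emph{every} Fourier coefficient $\widehat G(\gamma)$ is strictly positive (concretely $\widehat G(0)=\tfrac12$ and $\widehat G(\gamma)=\tfrac{1}{2|\F|}$ otherwise). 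Positivity makes the bias a one-line calculation: expanding $t_\cL(f)$ in characters gives $\sum_{\rho\in\Gamma}\widehat G(\rho_1)\cdots\widehat G(\rho_m)+o(1)$, and since $\Gamma\supsetneq\{0\}$ and every summand is positive, $t_\cL(f)=\Ex[f]^m+\Omega(1)$. Only \emph{after} this do they convert $f$ to a genuine set by random rounding and show the rounding preserves both the spreadness and the bias.

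Your attempt to work directly with an indicator $1_S$ runs into exactly the obstacle that the paper's positivity trick avoids. In your framework the bias condition becomes $\sum_{r\in R\setminus\{0\}}\prod_{j}\sigma(r_j)>0$ with $\sigma(a)=\sum_{s\in S}\omega^{as}$. For $S$ a symmetric interval, $\sigma$ is the Dirichlet kernel and takes negative values, so the sign of each summand depends on the specific coordinates of $r$, which in turn are determined by $\cL$ and $p$. Your sentence ``the low-frequency contribution contributes a positive constant'' is not a proof: for a relation $c$ whose entries, viewed in $(-p/2,p/2)$, are large, the products $\prod_j\sigma(tc_j)$ need not be dominated by small $t$. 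Likewise, ``intersecting over a basis of $R$ propagates the bias'' is false in general: positive correlation of $S^m$ with each of two hyperplanes does not imply positive correlation with their intersection. You have, in effect, identified the crux and then assumed it. (There is also a smaller slip: for $p=5$ your interval $\{-\lfloor p/6\rfloor,\dots,\lfloor p/6\rfloor\}$ is $\{0\}$, of density $1/5<1/3$.) The fix is precisely the paper's: replace $1_S$ by a function $G$ with $\widehat G>0$, and then randomly round.
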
 
\noindent As the proof makes clear, the constant $\tfrac13$ can be replaced by any constant strictly smaller than~$1$.
 
\begin{proof}
Let $\e_\F\colon \F \to \mathbb{C}$ be an arbitrary non-trivial character of the additive group of $\F$. Concretely, if we identify $\F$ with $\{0,\dots,q-1\}$, then we can take $\e_\F(x) =e^{\frac{2 \pi i}{|\F|}x}$. 
 For a given $\rho = (\rho_1,\ldots,\rho_m)\in \mathbb{F}^m$, let
\[
\Delta_\rho \defeq 
\left|\Ex_{\substack{\bfx \sim \F^d \\ \bfy \sim \F^d } } \e_{\F}\left(\sum_{i=1}^m  \rho_i  L_i(\bfx)L_i(\bfy)   \right) \right|.
\]
\paragraph{Estimating $\Delta_\rho$.} Let $\Gamma$ be the set of all $\rho=(\rho_1,\ldots,\rho_m) \in \F^m$ such that 
\[
\sum_{i=1}^m \rho_i L_i^2 \equiv0. 
\]
Hence, if $\rho \in \Gamma$, we have $\Delta_\rho = 1$.

Next, we show that if  $\rho \in \mathbb{F}^m\backslash \Gamma$, then $\Delta_\rho \leq \frac{1}{\sqrt{|\F|}}$.
Let $L_i (x) = \sum_a \lambda i_a x_a$. Then
\begin{align*}
  \sum_{i=1}^m \rho_i L_i(x)L_i(y) =&
 \sum_{i=1}^m \rho_i \sum_{a,b=1}^d \lambda_{ia} \lambda_{ib} x_a y_b \\
 =&\sum_{a,b=1}^d  \left(\sum_{i=1}^m \rho_i \lambda_{ia} \lambda_{ib}\right) x_a y_b.
\end{align*}
For $\rho \in \mathbb{F}^m\backslash \Gamma$, since
\[
  0 \not\equiv\sum_{i=1}^m \rho_i L_i^2 = \left(\sum_{i=1}^m \rho_i \lambda_{ia}\lambda_{ib} : a,b \in [d] \right),
\]
there are $a,b$ such that  
\[\alpha \defeq  \sum_{i=1}^m \rho_i \lambda_{ia} \lambda_{ib} \not\neq 0.\]
Therefore, for every fixed assignment to the variables $\{x_i : i\in [d]\backslash\{a\}\} \cup \{y_i : i\in [d]\backslash\{b\}\}$, there exist $\beta,\gamma,\psi \in \F$ such that 
\[\sum_{i=1}^m \rho_i L_i(x)L_i(y)  = \alpha x_ay_b + \beta x_a+\gamma y_b + \psi. \]
By the standard Gauss sum estimate~\cite[Lemma 3.1]{MR2773103},   we have
\[\left|\Ex_{\bfx_a,\bfx_b\in \mathbb{F}}\e_{\F}\Bigg(\alpha \bfx_a\bfx_b + \beta \bfx_a+\gamma \bfx_b+\psi \Bigg)\right|\leq \frac{1}{\sqrt{|\F|}}.
\]
Averaging over all assignments to $\{x_i : i\in [d]\backslash\{a\}\} \cup \{y_i : i\in [d]\backslash\{b\}\}$, we obtain
\begin{align*}
\Delta_\rho \leq \frac{1}{\sqrt{|\F|}}.
\end{align*}

\paragraph{Constructing a spread function $f$.}
Define $f\colon \F^n\to [0,1]$ as 
\[f(x) \defeq  G(Q(x)),\]
where 
 $G\colon \F \to [0,1]$ is defined as
 \[G(z) \defeq \frac{1}{2}+\sum_{0 \neq \gamma \in \F}\frac{1}{2|\F|}\e_\F(\gamma z),\]
 and   
 $Q\colon \F^n \to \F$ as
$Q(y) =y_1y_2+y_3y_4+\cdots+y_{n-1}y_{n}$,
where $n$ is some even number.
The distribution of $Q(\bfx)$ approaches the uniform distribution as $n \to \infty$, and in particular, $\Ex[f]$ approaches $\Ex[G]= \frac{1}{2}$. 
Using the Fourier transform of $G$, we have 
\[f(x) = \sum_{\gamma \in \F} \widehat{G}(\gamma) \e_{\F}(\gamma Q(x)).\]
For $x_1,\ldots,x_d \in \F^n$, we have 
\[\prod_{i=1}^m f\left(L_i(x_1,\ldots,x_d)\right) = \prod_{i=1}^m f\left(\sum_{j=1}^d \lambda_{ij}  x_j\right).\]
By substituting the Fourier transform of $G$ and expanding it, the above equals
\[
\sum_{\rho  \in \F^m} \widehat{G}(\rho_1)\cdots \widehat{G}(\rho_m) \e_{\F}\left(\sum_{i=1}^m  \rho_i  Q \left(\sum_{j=1}^d \lambda_{ij}  x_j\right)   \right).\]
By the definition of $Q$, we have  
\begin{align*}
\Ex_{\bfx_1,\ldots,\bfx_d \sim \F^n} \e_{\F}\left(\sum_{i=1}^m  \rho_{i}  Q \left(\sum_{j=1}^d \lambda_{ij}  \bfx_j\right)   \right) 
&= \left(\Ex_{\substack{\bfx_1,\ldots,\bfx_d \sim \F \\ \bfy_1,\ldots\bfy_d \sim \F}} \e_{\F}\left(\sum_{i=1}^m  \rho_{i}  \left(\sum_{j=1}^d \lambda_{ij}  \bfx_j\right)\left(\sum_{j=1}^d \lambda_{ij}  \bfy_j\right)   \right) \right)^{\frac n 2} \\
&= \Delta_\rho^{n/2}= 
\begin{cases}
1 & \text{if } \rho \in \Gamma, \\
\leq \left(\frac{1}{\sqrt{|\F|}}\right)^{n/2} & \text{if } \rho \not\in \Gamma,
\end{cases}
\end{align*}
which shows that
\[t_\cL(f) \defeq  \Ex \left[ \prod_{i=1}^m f(L_i(\bfx_1,\ldots,\bfx_d)) \right] 
  =\sum_{\rho \in \Gamma} \widehat{G}(\rho_1)\cdots \widehat{G}(\rho_m)  \pm O\left(|\F|^{-n/4}\right).\]
Note that $\widehat{G}(\gamma) \geq \frac{1}{2|\F|}$ for all $\gamma\in \F$. 
Moreover, since $L_1^2,\hdots L_m^2$ are linearly \emph{dependent}, $\Gamma$ contains at least one non-zero element. Therefore, the positivity of $\widehat{G}$ implies that for large enough $n$, 
\begin{equation}
\label{eq:large_f_t}
t_\cL(f) = \Ex[G]^m+  \sum_{\rho \in \Gamma\backslash\{0\}} \widehat{G}(\rho_1)\cdots \widehat{G}(\rho_m)  \pm O\left(|\F|^{-n/4}\right)= \Ex[f]^m+\Omega(1).
\end{equation}

Next, note that for every non-zero $\chi \in \F^n$, by the standard Gauss sum estimate~\cite[Lemma 3.1]{MR2773103},
\[|\widehat{f}(\chi)| = \left|\sum_{\gamma \in \F} \widehat{G}(\gamma) \Ex_{\bfx} \e_{\F}\left(\gamma Q(\bfx) - \chi (\bfx)\right) \right| \le e^{-\Omega(n)}.\]
Hence, denoting $\beta\defeq\Ex[f]=\widehat{f}(0)$, 
\[\max_{\chi \neq 0} |\widehat{f}(\chi)|=\norm{\widehat{f-\beta}}_\infty \le e^{-\Omega(n)},\]
which implies that for $S,T \subseteq \F^n$, we have 
\begin{align*}
\left|\Ex_{\bfx,\bfy} \1_S(\bfx)(f(\bfx+\bfy)-\beta)\1_T(\bfy)\right| 
&= \left|\sum_{\chi \neq 0} \widehat{\1_S}(\chi)\widehat{f}(-\chi)\widehat{\1_T}(\chi)\right| \le \left(\max_{\chi \neq 0} |\widehat{f}(\chi)|\right) \sum_{\chi}  |\widehat{\1_S}(\chi) \widehat{\1_T}(\chi)| \\
&\le  \norm{\widehat{f-\beta}}_\infty 
 \norm{\1_S}_{2}\norm{\1_T}_{2}\le e^{-\Omega(n)} \norm{\1_S}_{2}\norm{\1_T}_{2}= \frac{e^{-\Omega(n)}\sqrt{|S||T|}}{|\F|^n}.
\end{align*}
For an appropriate constant $c>0$,  if $|S|,|T| \ge |\F|^{n-cn}$, then 
\[\left| \beta -\Ex_{\substack{\bfx \sim S \\ \bfy \sim T}} f(\bfx+\bfy) \right| \le  \frac{|\F|^{2n}}{|S||T|} \times  \frac{e^{-\Omega(n)}\sqrt{|S||T|}}{|\F|^n}=\frac{e^{-\Omega(n)} |\F|^n}{\sqrt{|S||T|}}=e^{-\Omega(n)}.
\]
\paragraph{Converting $f$ to a set $A\subset \F^n$.}
To convert $f$ to a subset $A \subseteq \F^n$, we pick a random subset $\bfA \subseteq \F^n$ by including every element $x \in \bfA$ with probability $f(x)$. Note that 
\begin{equation}
\label{eq:spread_f_to_A}
|\Ex_{\bfx,\bfy} \1_S(\bfx)(f(\bfx+\bfy)-\1_\bfA(\bfx+\bfy))\1_T(\bfy) |\le  \norm{\1_S}_{2}\norm{\1_T}_{2} \norm{\widehat{f-\1_\bfA}}_\infty = \frac{|\F|^{2n}}{|S||T|} \norm{\widehat{f-\1_\bfA}}_\infty.
\end{equation}
For every $\chi \in \F^n$, we have 
\[\widehat{\1_{\bfA}}(\chi)=\frac{1}{|\F^n|} \sum_{x \in \F^n} \1_\bfA(x) \e_\F(x \cdot \chi), \]
and 
\[\Ex_{\bfA}\widehat{\1_{\bfA}}(\chi)=\widehat{f}(\chi). \]
Set $t \defeq |\F|^{2n/3}$.  By Hoeffding's inequality,  
\[\Pr_{\bfA}\left[\left|\widehat{\1_{\bfA}}(\chi)-\widehat{f}(\chi)\right| \ge \frac{t}{|\F|^n}\right] \le 2 \exp(-\frac{2 t^2}{4 |\F|^n}), \]
and so, by the union bound, 
\[\Pr_{\bfA}\left[\norm{\widehat{f-\1_{\bfA}}}_\infty  \ge \frac{t}{|\F|^n}\right] \le 2 \exp(-\frac{2 t^2}{4 |\F|^n}) |\F|^n=o(1).\]
In particular, by \eqref{eq:spread_f_to_A} and the choice of $t$, with probability $1-o(1)$, the set $\bfA$ has the following two properties: 
\begin{itemize}
\item[(i)] $\Ex[\1_\bfA] = \Ex[f] \pm  e^{-\Omega(n)}$, and in particular $\Ex[\1_\bfA] \ge 1/3$.  
\item[(ii)] For every $|S|,|T| \ge |\F|^{n - cn}$,  we have
\[\left| \Ex[f] -\Ex_{\substack{\bfx \sim S \\ \bfy \sim T}} A(\bfx+\bfy) \right| \le e^{-\Omega(n)}.\]
\end{itemize}
Next, we consider $|t_{\cL}(\bfA)-t_{\cL}(f)|$.  First, note  that by the definition of $\bfA$, we have  
\begin{align*}
|\Ex_\bfA \left[t_{\cL}(\bfA)-t_{\cL}(f)\right]|&=\left| \Ex_{\bfx,\bfA}\left[ \prod_{i=1}^m\1_{\bfA}(L_i(\bfx))\right] - \Ex_{\bfx}\left[ \prod_{i=1}^m f(L_i(\bfx))\right]\right|.
\end{align*}
Observe that if, for a given $x$, all $L_i(x)$'s are distinct, then 
\[
\Ex_{\mathbf{A}}\left[\prod_{i=1}^m\1_{\bfA}(L_i(x))\right] - \prod_{i=1}^m f(L_i(x)) = 0. 
\]
Hence, 
\begin{align*}
|\Ex_\bfA \left[t_{\cL}(\bfA)-t_{\cL}(f)\right]|&=\left| \Ex_{\bfx,\bfA}\left[ \prod_{i=1}^m\1_{\bfA}(L_i(\bfx))\right] - \Ex_{\bfx}\left[ \prod_{i=1}^m f(L_i(\bfx))\right]\right| \\
&\le \Pr_{\bfx}\left[\exists i\neq j \text{ s.t. } L_i(\bfx)=L_j(\bfx)\right] \le  m^2 |\F|^{-n}.
\end{align*}
Moreover,  
\begin{align*}
\Ex_{\bfA} \left|t_{\cL}(\bfA) - t_{\cL}(f)\right|^2 &= 
\Ex_{\bfA} \left[t_{\cL}(\bfA)^2-t_{\cL}(\bfA)t_{\cL}(f)\right] - t_\cL(f) \Ex_{\bfA} \left[t_{\cL}(\bfA) - t_{\cL}(f)\right]
\\
&\le \left|\Ex_{\bfA}  \Ex_{\bfx,\bfy}\left[ \prod_{i=1}^m\1_{\bfA}(L_i(\bfx))\1_{\bfA}(L_i(\bfy)) - \prod_{i=1}^m \1_{\bfA}(L_i(\bfx))f(L_i(\bfy))\right]\right| +  m^2 |\F|^{-n}. 
\end{align*}
Note that for $x,y\in (\F^n)^d$, if $L_1(x),\ldots,L_m(x),L_1(y),\ldots,L_m(y)$ are distinct, then 
\[
\Ex_{\bfA} \left[ \prod_{i=1}^m\1_{\bfA}(L_i(x))\1_{\bfA}(L_i(y)) - \prod_{i=1}^m \1_\bfA(L_i(x))f(L_i(y))\right] = 0.
\]
 Hence, 
\begin{align*}  
\Ex_{\bfA} \left|t_{\cL}(\bfA) - t_{\cL}(f)\right|^2  
&\le \Pr_{\bfx, \bfy}[L_1(\bfx),\ldots,L_m(\bfx),L_1(\bfy),\ldots,L_m(\bfy) \text{ are not distinct}] +  m^2 |\F|^{-n} \\
& \le 4m^2 |\F|^{-n}.
\end{align*}
Therefore, Markov's inequality implies that
\[\Pr_{\bfA}[\left|t_{\cL}(\bfA) - t_{\cL}(f)\right| \ge |\F|^{-n/3}]=o(1),\]
which, in combination with \eqref{eq:large_f_t}, shows that, with probability $1-o(1)$, the set $\bfA$ satisfies 
\begin{itemize}
\item[(iii)] $t_{\cL}(\bfA) = \Ex[f]^m + \Omega(1)$. 
\end{itemize}
Since with high probability, the random set $\bfA$ satisfies (i), (ii), (iii), there exists a set satisfying these three properties. 
\end{proof}

\bibliographystyle{alpha}  
\bibliography{ref}

\end{document}